\newtheorem{prop}{Proposition}[section]
\newtheorem{theorem}[prop]{Theorem}
\newtheorem{lemma}[prop]{Lemma}
\newtheorem{corr}[prop]{Corollary}
\theoremstyle{definition}
\newtheorem{definition}[prop]{Definition}
\theoremstyle{remark}
\newtheorem{remark}[prop]{Remark}
\newtheorem{example}[prop]{Example}
\DeclareMathOperator{\lt}{lt}
\DeclareMathOperator{\supp}{supp}
\DeclareMathOperator{\ran}{ran}
\DeclareMathOperator{\Ad}{Ad}
\DeclareMathOperator{\Ex}{Ex}
\DeclareMathOperator{\id}{id}
\newcommand{\mcal}[1]{\mathcal{#1}}
\newcommand{\mfrk}[1]{\mathfrak{#1}}
\newcommand{\inv}{^{-1}}
\newcommand{\cspn}{\overline{\text{span}}}
\newcommand{\sidehat}{^{\wedge}}
\newcommand{\doubledual}[1]{\,\widehat{\!\widehat{#1}}}
\newcommand{\unit}{^{(0)}}
\newcommand{\neghalf}{^{-\frac{1}{2}}}
\title{Locally Unitary Groupoid Crossed Products}
\author{Geoff Goehle}
\address{Mathematics and Computer Science Department, Stillwell 426,
  Western Carolina University, Cullowhee, NC 28723}
\email{grgoehle@email.wcu.edu}
\subjclass[2000]{47L65,22A22}
\begin{document}

\begin{abstract}
We define the notion of a principal $S$-bundle where $S$
is a groupoid group bundle and show that there is a one-to-one
correspondence between principal $S$-bundles and elements of a sheaf
cohomology group associated to $S$.  We also define the notion of a
locally unitary action and show that the spectrum of the crossed
product is a principal $\widehat{S}$-bundle.  Furthermore, we prove
that the isomorphism class of the spectrum determines the exterior
equivalence class of the action and that every principal bundle can be
realized as the spectrum of some locally unitary crossed product.   
\end{abstract}

\maketitle

\section*{Introduction}

In this paper we study the connection between groupoid dynamical
systems and the spectrum of the crossed product.  
Our eventual goal will be to show there is a
strong link between the exterior equivalence class of a locally
unitary action and the isomorphism class of the spectrum
$(A\rtimes S)\sidehat$ as a principal bundle.  What's
more, there is a one-to-one correspondence between these principal
bundles and elements of a cohomology group, and this yields a
complete cohomological invariant for the exterior equivalence class of
locally unitary actions on algebras with Hausdorff spectrum.  
The primary objective of this work is to generalize the similar theory
of locally unitary group actions \cite{locunitary} to groupoids.  
However, the idea of using local triviality
conditions and cohomology classes to produce information about the
spectrum or primitive ideal space of crossed products has been implemented
in many contexts.  It is applied to, what turns out to be, a groupoid
setting in \cite{locunitarystab} and is often connected to actions on
algebras with continuous trace as in \cite{prelocunit} or \cite{cpct}.

The structure of the paper is as follows.  In Section
\ref{sec:principal} we introduce the notion of a principal bundle
associated to a groupoid group bundle.  This theory mirrors
the classic theory of principal group bundles.  In Section
\ref{sec:group-cross-prod} we introduce some basic results and
constructions concerning groupoid crossed products.  In Section
\ref{sec:unitary} we define what it means to be a unitary groupoid action 
and show that these actions are trivial in the sense that the
associated crossed product is given by a tensor product.  Finally, in
Section \ref{sec:locally-unitary} we describe locally unitary
actions.  Specifically, we show that if $\alpha$ is a locally unitary
action of the group bundle $S$ on a $C^*$-algebra with Hausdorff
spectrum $A$ then the spectrum of $A\rtimes_\alpha S$ is a 
principal $\widehat{S}$-bundle and the isomorphism class of the
spectrum determines the exterior equivalence class of the action.
Furthermore, we show that every principal bundle can be realized as
the spectrum of some locally unitary crossed product.

Before we begin in earnest it should be noted that the results of
this paper can be found, in more detail and with a great deal of
background material, in the author's thesis \cite{mythesis}.

\section{Principal Group Bundles}
\label{sec:principal}

In this section we will define the notion of a principal $S$-bundle
associated to a groupoid group bundle $S$.  The theory of principal
$S$-bundles turns out to be nearly identical to the classic theory of
principal group bundles.  
Consequently, we will only outline most of the proofs in
this section.   The following material is modeled off
\cite[Section 4.2]{tfb2}. 

\begin{remark}
A (groupoid) group bundle is a locally compact Hausdorff groupoid $S$ with
identical range and source maps.  Throughout this paper we will let
$S$ denote a second countable, locally compact Hausdorff groupoid
group bundle with abelian fibres 
and will denote both the range and the source map by $p$. 
\end{remark}

We begin with some definitions.  Recall that we may view any
continuous surjection $q:X\rightarrow Y$ as a ``topological'' bundle.
We will denote the fibres by $X_y := q\inv(y)$ for all $y\in Y$. 

\begin{definition}
Let $S$ be an abelian locally compact Hausdorff group bundle with
bundle map $p$.  Suppose
$X$ is a locally compact Hausdorff bundle over $S\unit$ with bundle
map $q$.  Furthermore, 
suppose there is an open cover $\mcal{U}=\{U_i\}_{i\in I}$ of $S\unit$ such
that for each $i\in I$ there is a homeomorphism $\phi_i:
q\inv(U_i)\rightarrow p\inv(U_i)$ with $p\circ \phi_i = q$.
Finally, suppose that for all $i,j\in I$ there is a section
$\gamma_{ij}$ of $S|_{U_{ij}} = p\inv(U_{ij})$ such that 
\[
\phi_i\circ \phi_j\inv(s) = \gamma_{ij}(p(s))s
\]
for all $s\in S|_{U_{ij}}$.  Such a bundle is called a {\em
  principal $S$-bundle with trivialization $(\mcal{U},\phi,\gamma)$.}  The
maps $\phi=\{\phi_i\}$ are referred to as {\em trivializing maps} and
the sections $\gamma= \{\gamma_{ij}\}$ are referred to as {\em
  transition maps.}
\end{definition}

\begin{definition}
\label{def:18}
Suppose $q:X\rightarrow S\unit$ and $r:Y\rightarrow S\unit$ are both
principal $S$-bundles with trivializations $(\mcal{U},\phi,\gamma)$
and $(\mcal{V},\psi,\eta)$ respectively.  Let $\mcal{W}$ be some
common refinement of $\mcal{U}$ and $\mcal{V}$ with refining maps $\rho$
and $\sigma$ respectively. Furthermore, suppose 
$\Omega:X\rightarrow Y$ is a homeomorphism such that $r\circ\Omega = q$ and
that for all $W_i\in \mcal{W}$
$\beta_i:W_i\rightarrow S$ is a section of $p$ 
such that for all $s\in p\inv(W_i)$
\[
\psi_{\sigma(i)} \circ \Omega \circ \phi_{\rho(i)}\inv(s) = \beta_i(p(s))s.
\]
Then $(\mcal{W},\Omega,\beta)$ is an $S$-bundle {\em isomorphism} of
$X$ onto $Y$.  
\end{definition}

\begin{remark}
Suppose we have a principal $S$-bundle $X$ with trivializations
$(\mcal{U},\phi,\gamma)$ and $(\mcal{V},\psi,\eta)$.  We say that the
trivializations are equivalent if the identity map forms an
isomorphism as in Definition \ref{def:18}.  We then say that $X$ is a
{\em principal $S$-bundle} if it is equipped with a maximal, pairwise
equivalent collection of trivializations called an atlas.  We say that
two principal $S$-bundles are isomorphic if they are isomorphic with
respect to any pair of trivializations in their respective atlases.  
This is the usual method of dealing with locally trivial bundles and
we won't make much of a fuss about it.  
\end{remark}

Next, we would like to mimic the group case and characterize the set
of all principal $S$-bundles by classes in some cohomology group.  We
will be using sheaf cohomology as defined and developed in
\cite[Section 4.1]{tfb}.

\begin{prop}
\label{prop:26}
Let $S$ be an abelian locally compact Hausdorff group bundle and for
$U$ open in $S\unit$ let $\mcal{S}(U) = \Gamma(U,S)$ be the set of
continuous sections from $U$ into $S$.  Then
$\mcal{S}$ is an abelian sheaf and as such gives rise to a sheaf cohomology 
$H^n(S\unit;\mcal{S})$ which we shall denote by $H^n(S)$.
\end{prop}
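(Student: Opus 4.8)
The claim is that $\mathcal{S}(U) = \Gamma(U, S)$ defines an abelian presheaf that is in fact a sheaf. Let me sketch the plan.

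First, I would verify the presheaf structure. For $V \subseteq U$ open in $S\unit$, the restriction map $\mathcal{S}(U) \to \mathcal{S}(V)$ is just $\gamma \mapsto \gamma|_V$, which is obviously functorial (restriction to $U$ is the identity, and restrictions compose correctly). Each $\mathcal{S}(U)$ is an abelian group under fibrewise multiplication: given $\gamma, \gamma' \in \Gamma(U,S)$, define $(\gamma\gamma')(u) = \gamma(u)\gamma'(u)$, which makes sense since $p(\gamma(u)) = p(\gamma'(u)) = u$ so the two elements lie in the same (abelian) fibre $S_u$. Continuity of $\gamma\gamma'$ follows because multiplication $S^{(2)} \to S$ is continuous and $u \mapsto (\gamma(u), \gamma'(u))$ is a continuous map into $S^{(2)}$; the inverse section $u \mapsto \gamma(u)\inv$ is continuous since inversion on $S$ is continuous; and the identity element is the section $u \mapsto p|_{S\unit}\inv(u)$, i.e. the unit over $u$, which is continuous because the unit map of a locally compact Hausdorff groupoid is a homeomorphism onto its image. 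Commutativity is immediate from the abelian fibre hypothesis. The restriction maps are group homomorphisms since multiplication and inversion are defined fibrewise.

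Next, the sheaf axioms. Let $U$ be open with an open cover $\{U_i\}$. For the separation (identity) axiom: if $\gamma, \gamma' \in \mathcal{S}(U)$ satisfy $\gamma|_{U_i} = \gamma'|_{U_i}$ for all $i$, then $\gamma = \gamma'$ since the $U_i$ cover $U$ and sections are determined pointwise. For the gluing axiom: suppose we are given $\gamma_i \in \mathcal{S}(U_i)$ with $\gamma_i|_{U_i \cap U_j} = \gamma_j|_{U_i \cap U_j}$ for all $i,j$. Define $\gamma: U \to S$ by $\gamma(u) = \gamma_i(u)$ whenever $u \in U_i$; this is well-defined by the compatibility condition. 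It satisfies $p \circ \gamma = \id_U$ since each $\gamma_i$ does. Continuity is local: $\gamma|_{U_i} = \gamma_i$ is continuous and the $U_i$ are open, so $\gamma$ is continuous on $U$. Hence $\gamma \in \mathcal{S}(U)$ and restricts to each $\gamma_i$. This establishes that $\mathcal{S}$ is a sheaf of abelian groups.

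Finally, once $\mathcal{S}$ is known to be an abelian sheaf on the space $S\unit$, the existence of the cohomology groups $H^n(S\unit; \mathcal{S})$ is simply an appeal to the general machinery of sheaf cohomology as developed in \cite[Section 4.1]{tfb}; we adopt the abbreviation $H^n(S) := H^n(S\unit; \mathcal{S})$. I do not expect any real obstacle here — the only points requiring a moment's care are the continuity of the fibrewise operations (which rests on continuity of multiplication and inversion in the groupoid $S$ together with the fact that the unit space embeds as a closed subset) and the observation that the pointwise-defined glued section is automatically continuous because continuity is a local property. Everything else is a routine unwinding of definitions.
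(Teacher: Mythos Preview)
Your proposal is correct and follows essentially the same approach as the paper's proof: verify the presheaf structure via restriction, check the gluing and identity axioms for sections that agree on overlaps, and note that the group structure is abelian. The paper's version is considerably more terse (it calls the presheaf verification ``straightforward'' and only spells out the gluing axiom), whereas you have carefully written out the group structure on $\Gamma(U,S)$ and the continuity of the fibrewise operations; this extra detail is fine but not a different argument.
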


\begin{proof}
Straightforward arguments show that $\mcal{S}$ is a pre-sheaf with
$\rho_{U,V}:\mcal{S}(U)\rightarrow \mcal{S}(V)$ given by restriction.  
Now suppose we have an open set $U\subset S\unit$ and a decomposition
$U = \bigcup_{i\in I} U_i$ of $U$ into open sets $U_i$.   Furthermore, suppose
we have $\gamma_i\in \Gamma(U_i,S)$ for all $i\in I$ and for all
$i,j\in I$ 
\[
\rho_{U_i,U_{ij}}(\gamma_i) = \rho_{U_j,U_{ij}}(\gamma_j).
\]
Tracing through the definitions we see that each $\gamma_i$ is a
continuous section on $U_i$ such that the $\gamma_i$ agree on
overlaps.  Therefore, we can define a continuous section $\gamma$ on $U$
in a piecewise fashion so that $\rho_{U,U_i}(\gamma) = \gamma_i$.
Furthermore, it is clear that $\gamma$ is uniquely determined by the
$\gamma_i$.  Thus $\mcal{S}$ is a sheaf of groups on $S\unit$ which is
obviously abelian.   
\end{proof}

At this point we can build the desired correspondence between principal
$S$-bundles and elements of $H^1(S)$.  

\begin{theorem}
\label{prop:principcohom}
Suppose $S$ is an abelian locally compact Hausdorff group bundle.
There is a one-to-one correspondence between the isomorphism classes
of principal $S$-bundles and elements of the sheaf cohomology group
$H^1(S)$.  Given a principal bundle $X$ with trivialization 
$(\mcal{U},\phi,\gamma)$ the cohomology class in $H^1(S)$ associated
to $X$ is realized by the cocycle $\gamma$.  
\end{theorem}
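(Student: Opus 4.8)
The plan is to follow the classical template for classifying locally trivial bundles by \v{C}ech cohomology, exploiting that degree-one sheaf cohomology always agrees with degree-one \v{C}ech cohomology, so that every class in $H^1(S)$ is represented by an $\mcal{S}$-valued $1$-cocycle on some open cover of $S\unit$, with two cocycles representing the same class exactly when they become cohomologous after passing to a common refinement.

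First I would check that the transition maps of a principal $S$-bundle form a cocycle. Given a trivialization $(\mcal{U},\phi,\gamma)$, composing the identities $\phi_i\circ\phi_j\inv(s)=\gamma_{ij}(p(s))s$ over the triple overlap $U_{ijk}$ and using that the fibres of $S$ are abelian gives $\gamma_{ij}\gamma_{jk}=\gamma_{ik}$ on $U_{ijk}$, and taking $i=j=k$ forces $\gamma_{ii}$ to be the identity section; thus $\gamma\in Z^1(\mcal{U},\mcal{S})$ and determines a class $[\gamma]\in H^1(S)$. To see this class is an isomorphism invariant (in particular independent of the trivialization chosen within the atlas), I would take an $S$-bundle isomorphism $(\mcal{W},\Omega,\beta)$ as in Definition \ref{def:18} and compute on overlaps, starting from $\psi_{\sigma(i)}\circ\Omega\circ\phi_{\rho(i)}\inv(s)=\beta_i(p(s))s$, that $\eta_{\sigma(i)\sigma(j)}=\beta_i\,\gamma_{\rho(i)\rho(j)}\,\beta_j\inv$ after restriction to $W_{ij}$; hence the cocycles pulled back to $\mcal{W}$ differ by the coboundary $\delta\beta$ and define the same element of $H^1(S)$. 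Specializing to $\Omega=\id$ handles independence of the trivialization, and the general statement shows isomorphic bundles have equal classes, producing a well-defined map from isomorphism classes of principal $S$-bundles to $H^1(S)$.

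For injectivity, suppose $X$ and $Y$ are principal $S$-bundles whose cocycles become cohomologous on a common refinement $\mcal{W}$, say $\eta_{ij}=\beta_i\gamma_{ij}\beta_j\inv$ for sections $\beta_i$ of $p$ over $W_i$. I would define $\Omega$ locally by $x\mapsto\psi_i\inv\bigl(\beta_i(q(x))\,\phi_i(x)\bigr)$ on $q\inv(W_i)$, verify via the coboundary relation that these local homeomorphisms agree on the overlaps $q\inv(W_{ij})$ (the computation of the previous paragraph run backwards), and conclude that they glue to a homeomorphism $\Omega\colon X\to Y$ with $r\circ\Omega=q$ for which $(\mcal{W},\Omega,\beta)$ is an $S$-bundle isomorphism. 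For surjectivity, given $\gamma\in Z^1(\mcal{U},\mcal{S})$ — and using that $S\unit$ is Lindel\"of to arrange $\mcal{U}$ countable — I would form the quotient $X:=\bigl(\coprod_{i\in I}p\inv(U_i)\times\{i\}\bigr)/{\sim}$ of the disjoint union by the equivalence relation generated by $(s,j)\sim(\gamma_{ij}(p(s))s,i)$ for $s\in S|_{U_{ij}}$, where the cocycle identity makes $\sim$ well behaved. Since $p$ is simultaneously the range and source map of $S$ we have $p\bigl(\gamma_{ij}(p(s))s\bigr)=p(s)$, so $q[s,i]:=p(s)$ is a well-defined map $X\to S\unit$, and $\phi_i[s,i]:=s$ defines trivializing maps whose transition maps are precisely the $\gamma_{ij}$; hence the class of $X$ is $[\gamma]$.

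The step I expect to be the main obstacle is checking that the quotient $X$ built in the surjectivity argument is a \emph{locally compact Hausdorff} (second countable) bundle: second countability and local compactness descend from the $p\inv(U_i)$ once one knows the quotient map is open, but Hausdorffness requires showing that $\sim$ is closed in $X\times X$, which uses continuity of the transition maps $\gamma_{ij}$ together with a net argument in the overlaps. This is the one place where the group-bundle structure genuinely enters the bookkeeping rather than being a cosmetic replacement of left translation by $s\mapsto\gamma_{ij}(p(s))s$, and it is where I would take the most care; everything else is a faithful transcription of \cite[Section 4.2]{tfb2}.
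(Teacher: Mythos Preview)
Your proposal is correct and follows essentially the same route as the paper: both assign to a bundle the cocycle of its transition maps, verify isomorphism invariance via the coboundary relation $\eta_{ij}\beta_j=\beta_i\gamma_{ij}$ on a common refinement, and construct the inverse by the standard clutching construction on $\coprod_i p\inv(U_i)$ modulo the relation generated by the $\gamma_{ij}$. The paper disposes of your Hausdorffness concern in one line (the quotient is locally homeomorphic to $S$, and the continuous map $q$ to the Hausdorff space $S\unit$ separates fibres), and your invocation of Lindel\"of to make $\mcal{U}$ countable is unnecessary for the argument.
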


\begin{proof}
Because this proof is so similar to the corresponding proof for
principal group bundles we will limit ourselves to sketching an
outline.  First, suppose $X$ is a principal $S$-bundle and pick a trivialization
$(\mcal{U},\phi,\gamma)$.  Then $\gamma = \{\gamma_{ij}\}$ turns out
to be a cocycle and as such we can use $\gamma$ to define a
class $[\gamma]\in H^1(S\unit;\mcal{S})$.  It is straightforward to
show that $[\gamma]$ is independent of which trivialization we choose
for $X$.  Now let $Y$ be another bundle isomorphic to $X$.  Suppose
$(\mcal{U},\phi,\gamma)$ is a trivialization for $X$,  
$(\mcal{V},\psi,\eta)$ a trivialization for $Y$, and let
$(\mcal{W},\Omega,\beta)$ be an isomorphism from $X$ to $Y$.  
By passing to a common
refinement we can assume, without loss of generality, that $\mcal{U} =
\mcal{V}=\mcal{W}$.  Then, for all $u\in U_{ij}$, simple calculations
show that $\eta_{ij}(u)\beta_j(u) = \beta_i(u)\gamma_{ij}(u)$.
Hence $\gamma\inv\eta$ is a boundary and
therefore $[\gamma]=[\eta]$ in $H^1(S\unit;\mcal{S})$.  
This shows that the map
$X\mapsto [\gamma]$ is a well defined function from the set of isomorphism
classes of principal $S$-bundles into $H^1(S)$.  

Next we are going to construct an inverse map.  Suppose $c\in
H^1(S\unit;\mcal{S})$ is realized by $\gamma\in Z^1(\mcal{U},\mcal{S})$
for some open cover $\mcal{U}$.  Let $C = \coprod_i p\inv(U_i)$ be the
disjoint union of the $p\inv(U_i)$ and denote elements of $C$ by
$(s,i)$ where $s\in p\inv(U_i)$.  Define a relation on $C$
by $(s,i)\equiv (t,j)$ if and only if $p(s)=p(t)=u$ and $s =
\gamma_{ij}(u)t$.  Elementary calculations using the cocycle identity 
show that $\equiv$ is an equivalence relation.  Let
$X_\gamma$ be the quotient of $C$ by $\equiv$ with
equivalence classes denoted by $[s,i]$ for $(s,i)\in C$
and associated quotient map $Q$.  Since the map $(s,i)\mapsto p(s)$ 
is constant on $[s,i]$, we can factor it through $Q$
to obtain a continuous surjection $q:X_\gamma\rightarrow S\unit$.
As it turns out, $\equiv$ is trivial on
$p\inv(U_i)\subset C$ and 
$\phi_i=Q|_{p\inv(U_i)}:p\inv(U_i) \rightarrow q\inv(U_i)$ is a
homeomorphism.  Because $X_\gamma$ is locally homeomorphic to $S$, it
follows that $X_\gamma$ is locally compact
Hausdorff and that we can view $X_\gamma$ as a bundle over $S\unit$ with
bundle map $q$. Furthermore, straightforward computations show
$X_{\gamma}$ is a principal $S$-bundle with trivialization 
$(\mcal{U},\phi,\gamma)$, and that the cohomology
class associated to $X_\gamma$ is $[\gamma] = c$.  

We must show that our map is well defined in the sense that if we
choose two different realizations of $c$ we end up with isomorphic
principal bundles.  Let $\eta = \{\eta_{ij}\}$ be some other cocycle
which implements $c$ on an open cover $\mcal{V}$.  Since
$[\eta]=[\gamma]=c$ we can pass to some common refinement of
$\mcal{U}$ and $\mcal{V}$, say $\mcal{W}$ with refining maps $r$ and
$\rho$ respectively, and find continuous
sections $\beta_i\in \Gamma(W_i,S)$ such that 
\[
\eta_{\rho(i)\rho(j)} \beta_j = \beta_i \gamma_{r(i)r(j)}.
\]
We define $\Omega : X_\gamma\rightarrow X_\eta$ locally by
$\Omega([s,r(i)]) = [\beta_i(p(s)) s, \rho(i)]$.  Some basic
arguments show that $\Omega$ is well defined and that
$(\mcal{W},\Omega,\beta)$ is an isomorphism from $X_\gamma$ onto
$X_\eta$.  Thus we have constructed a well defined map
$[\gamma]\mapsto X_\gamma$ from
$H^1(S)$ into the set of isomorphism classes of principal
$S$-bundles.  Furthermore, it is clear that this map is a right
inverse for $X\mapsto [\gamma]$.  Another simple argument shows that it is
also a left inverse so that we have the desired correspondence. 
\end{proof}

We continue our exploration of principal $S$-bundles by showing that
they are equivalent to a certain class of principal $S$-spaces.
First, observe that only transitive groupoids can have
transitive actions.  Thus, we make the following definition for
groupoid actions which are as transitive as they can be.   

\begin{definition}
\label{def:1}
Suppose $G$ is a locally compact Hausdorff groupoid acting on a
locally compact Hausdorff space $X$.  Then we say the action is {\em
  orbit transitive} if $G\cdot x = G\cdot y$ in $X/G$ whenever $G\cdot
r(x) = G\cdot r(y)$ in $G\unit/G$. 
\end{definition}

\begin{remark}
Recall that a $G$-space $X$ is {\em proper} if
the map $(\gamma,x)\mapsto (\gamma\cdot x, x)$ from $G\ltimes X$ into
$X\times X$ is proper.  The action is {\em principal} if it is
both free and proper. 
\end{remark}

Next we have the following construction.

\begin{prop}
\label{prop:27}
Suppose $X$ is a principal $S$-bundle with trivialization
$(\mcal{U},\phi,\gamma)$.  Define the range map on $X$ to be its bundle
map $q$.  Then, for $s\in S$, $x\in X$ such that $p(s)=q(x)\in U_i$, 
\begin{equation}
s\cdot x = \phi_i\inv(s\phi_i(x))
\end{equation}
defines a continuous action of $S$ on $X$.  Furthermore the following
hold:
\begin{enumerate}
\item The action of $S$ on $X$ is principal.
\item For all $i$ the map $\phi_i$ is equivariant with respect to this
  action and the action of $S$ on itself by left multiplication. 
\item The action of $S$ on $X$ is orbit transitive.  
\end{enumerate}
\end{prop}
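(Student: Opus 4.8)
The plan is to treat the assertions in order, after first verifying that the formula for $s\cdot x$ is independent of the chart. If $p(s)=q(x)\in U_i\cap U_j$, I would use the relation $\phi_i\circ\phi_j\inv(t)=\gamma_{ij}(p(t))t$ to write $\phi_i(x)=\gamma_{ij}(q(x))\phi_j(x)$, left-multiply by $s$, and invoke commutativity of the fibre $p\inv(q(x))$ to obtain $s\phi_i(x)=\gamma_{ij}(q(x))\,s\phi_j(x)$; applying $\phi_i\inv$ and using $\phi_j\circ\phi_i\inv(u)=\gamma_{ij}(p(u))\inv u$ (which follows from the cocycle identity for $\gamma$) collapses this to $\phi_i\inv(s\phi_i(x))=\phi_j\inv(s\phi_j(x))$. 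With that settled, the groupoid action axioms are routine: a chart computation gives $q(s\cdot x)=p(s)$, so iterated actions remain in a single chart, where the action is literally transported multiplication in $S$; hence the unit $q(x)$ fixes $x$, $(st)\cdot x=s\cdot(t\cdot x)$ when $p(s)=p(t)=q(x)$, and continuity all follow from the corresponding facts for $S$.

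I would establish (2) next, since it drives everything else: straight from the definition, $\phi_i(s\cdot x)=\phi_i\bigl(\phi_i\inv(s\phi_i(x))\bigr)=s\phi_i(x)$, so $\phi_i$ intertwines the $S$-action on $q\inv(U_i)$ with left translation of $S|_{U_i}$ on itself.

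For (1): freeness is a local condition, so by (2) it reduces to the triviality that $st=t$ in a fibre forces $s$ to be a unit. Properness is the one point requiring real care. I would first note that the image of $\Phi\colon S\ltimes X\to X\times X$, $\Phi(s,x)=(s\cdot x,x)$, lies in $X*_qX:=\{(y,x)\in X\times X:q(y)=q(x)\}$, which is closed in $X\times X$ since $q$ is continuous into the Hausdorff space $S\unit$; so it is enough to see that $\Phi$ is proper as a map into the locally compact Hausdorff space $X*_qX$. Properness into a locally compact Hausdorff space can be checked locally on the target, and the sets $(q\inv(U_i)\times q\inv(U_i))\cap X*_qX$ form an open cover of $X*_qX$; over $U_i$ the homeomorphism $\phi_i\times\phi_i$ conjugates $\Phi$ to the map $(s,t)\mapsto(st,t)$ on the set of pairs in $S|_{U_i}$ with $p(s)=p(t)$, which is a homeomorphism of that set onto itself (inverse $(a,b)\mapsto(ab\inv,b)$) and hence proper. (Alternatively, one can argue directly with sequences: for $K\subseteq X\times X$ compact, a sequence in $\Phi\inv(K)$ has its $x$-coordinates and its $(s\cdot x)$-coordinates confined to compact subsets of $X$, which after passing to a convergent subsequence inside a single chart forces the $S$-coordinates to converge as well, so $\Phi\inv(K)$ is compact.)

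Finally (3) follows once one observes that a group bundle acts trivially on its unit space, so the hypothesis $S\cdot q(x)=S\cdot q(y)$ just says $q(x)=q(y)=:u$; choosing $U_i\ni u$ and putting $s=\phi_i(y)\phi_i(x)\inv\in p\inv(u)\subseteq S$ gives $p(s)=u=q(x)$ and, by (2), $s\cdot x=\phi_i\inv(\phi_i(y))=y$, so $x$ and $y$ share an orbit. The only genuine obstacle is the properness half of (1); everything else is transported chart by chart from elementary facts about $S$ acting on itself by translation.
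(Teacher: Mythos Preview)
Your proposal is correct and follows essentially the same route as the paper: check well-definedness on overlaps, verify the action axioms, and handle (a)--(c) by transporting through the charts $\phi_i$. The only point of divergence is the properness half of (a). The paper argues directly with nets---exactly your parenthetical alternative: given $x_l\to x$ and $s_l\cdot x_l\to y$, pass to a subnet in a single chart $U_i$ and read off $s_l\to\phi_i(y)\phi_i(x)\inv$ from $s_l\phi_i(x_l)\to\phi_i(y)$ and $\phi_i(x_l)\to\phi_i(x)$. Your primary argument is more structural, reducing properness to a local check on the closed target $X*_qX$ and conjugating $\Phi$ to the shear homeomorphism $(s,t)\mapsto(st,t)$ on $S|_{U_i}$; this makes the role of the trivializations cleaner, at the cost of invoking the (standard) fact that properness into a locally compact Hausdorff space can be verified on an open cover of the target. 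Either works.
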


\begin{proof}
Elementary calculations show that the action is well defined on
overlaps, respects the groupoid operations, and is continuous.  

Part {\bf (a)}: Suppose $s\cdot x = x$ for $s\in S$ and $x\in X$.  
Then for some $i$ we have
$\phi_i\inv(s\phi_i(x)) = x$ so that $s \phi_i(x) = \phi_i(x)$.  It
follows that $s\in S\unit$ and that the action is free.  Now suppose
$\{x_l\}$ and $\{s_l\}$ are nets in $X$ and $S$, respectively, so that
$x_l\rightarrow x$ and $s_l\cdot x_l\rightarrow y$.  We can pass to a
subnet and assume that $p(s)=q(x)=q(y) \in U_i$ and 
$p(s_l)=q(x_l)\in U_i$ for all $l$.  In this case $s_l\phi_i(x_l)
\rightarrow \phi_i(y)$ and, combining this with the fact that
$\phi_i(x_l)\rightarrow \phi_i(x)$, we have $s_l\rightarrow
\phi_i(y)\phi_i(x)\inv$.  It follows quickly that the action of $S$ on $X$ is
proper, and therefore principal.  

Part {\bf (b)}:  Suppose $s\in S$ and $x\in X$ such that
$p(s)=q(x)=u\in U_i$.  Then 
\[
\phi_i(s\cdot x) = \phi_i(\phi_i\inv(s\phi_i(x))) = s\phi_i(x).
\]

Part {\bf (c)}:  Suppose $x,y\in X$ such that $q(x) = q(y)\in U_i$  
and let $s = \phi_i(y)\phi_i(x)\inv$.  Then we are done since
$s\cdot x = \phi_i\inv(\phi_i(y)\phi_i(x)\inv\phi_i(x)) = y$.
\end{proof}

This next proposition shows that we can view principal $S$-bundles as
particularly nice $S$-spaces.  It is often useful think of principal
$S$-bundles in this manner.  

\begin{theorem}
\label{thm:principalspace}
Suppose $S$ is an abelian locally compact Hausdorff group bundle and
$X$ is a locally compact Hausdorff space.
Then $X$ is a principal $S$-bundle if and only if $X$ is a principal,
orbit transitive, $S$-space such that the range map on $X$
has local sections. 
\end{theorem}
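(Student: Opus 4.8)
The plan is to prove the two implications separately, with the reverse one carrying all the weight. For the forward direction, assume $X$ is a principal $S$-bundle with trivialization $(\mcal{U},\phi,\gamma)$ and give it the action supplied by Proposition \ref{prop:27}. That proposition already tells us the action is continuous, principal, and orbit transitive, and that its range map is the bundle map $q$, so all that is left is to produce local sections of $q$. I would take $e\colon S\unit\to S$ to be the (continuous) unit section $u\mapsto u$ and set $\tau_i = \phi_i\inv\circ e|_{U_i}$ on $U_i$; since $p\circ\phi_i = q$ and $p\circ e = \id$ we get $q\circ\tau_i = \id_{U_i}$, and the $U_i$ cover $S\unit$, so $q$ has local sections.

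For the reverse direction, suppose $X$ is a principal, orbit transitive $S$-space whose range map $r$ admits continuous sections $\tau_i\colon U_i\to X$ over an open cover $\mcal{U}=\{U_i\}$ of $S\unit$, and set $q:=r$. The first thing I would establish is that since the action is principal, the map $(s,x)\mapsto(s\cdot x,x)$ from $S\ltimes X$ to $X\times X$ is continuous, injective by freeness, and proper by hypothesis, hence a homeomorphism onto its image; and since a group bundle fixes every point of its unit space, orbit transitivity says the $S$-orbits in $X$ are exactly the fibres of $r$, so that image is precisely the fibred product $\{(y,x):r(y)=r(x)\}$. Composing the inverse of this homeomorphism with the projection to $S$ gives a continuous ``division map'' $d$, where $d(y,x)$ is the unique $s\in S$ with $s\cdot x=y$ (defined when $r(y)=r(x)$). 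I would then define $\phi_i\colon r\inv(U_i)\to p\inv(U_i)$ by $\phi_i(x)=d\bigl(x,\tau_i(r(x))\bigr)$ — the unique $s$ with $s\cdot\tau_i(r(x))=x$ — which is continuous because $d$, $r$, $\tau_i$ are; its inverse $\phi_i\inv(s)=s\cdot\tau_i(p(s))$ is visibly continuous; and freeness together with orbit transitivity make $\phi_i$ a bijection with $p\circ\phi_i=r$. Finally, for the transition maps I would put $\gamma_{ij}(u)=d\bigl(\tau_j(u),\tau_i(u)\bigr)$, a continuous section of $p$ over $U_{ij}$, and verify the required identity using commutativity of the fibres: $\phi_j\inv(s)=s\cdot\tau_j(p(s))=s\cdot\bigl(\gamma_{ij}(p(s))\cdot\tau_i(p(s))\bigr)=\bigl(\gamma_{ij}(p(s))\,s\bigr)\cdot\tau_i(p(s))$, so applying $\phi_i$ returns $\gamma_{ij}(p(s))\,s$, as wanted. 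This exhibits $(\mcal{U},\phi,\gamma)$ as a trivialization of $X$ as a principal $S$-bundle.

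The main obstacle I anticipate is the continuity of the division map $d$, i.e.\ showing the orbit map $(s,x)\mapsto(s\cdot x,x)$ is a homeomorphism onto the fibred product; this is exactly where properness is indispensable, since a continuous proper injection into a locally compact Hausdorff space is a closed embedding, and orbit transitivity is what pins the image down to $\{(y,x):r(y)=r(x)\}$. Everything after that — bijectivity of the $\phi_i$, the transition‑map identity, and (if one wants it) the cocycle condition on $\gamma$ — is a short computation with free actions in an abelian group bundle. The only bookkeeping point worth a sentence is checking that $(\mcal{U},\phi,\gamma)$ literally satisfies the definition of a principal $S$-bundle, in particular that $q=r$ is a continuous surjection onto $S\unit$; surjectivity follows because the $\tau_i$ are defined on a cover of $S\unit$.
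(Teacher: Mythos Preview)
Your proof is correct and follows essentially the same line as the paper's: the forward direction invokes Proposition~\ref{prop:27} and reads off local sections $\tau_i(u)=\phi_i^{-1}(u)$, while the converse builds the trivializing maps from the local sections, with inverse $s\mapsto s\cdot\tau_i(p(s))$ (the paper's $\psi_i$). The only difference is packaging: you extract a continuous division map $d$ from properness once and use it to obtain continuity of both $\phi_i$ and $\gamma_{ij}$ for free, whereas the paper shows $\psi_i$ is a homeomorphism via a direct net argument and then declares continuity of $\gamma_{ij}$ ``simple enough'' --- same idea, slightly cleaner wrapping on your side.
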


\begin{proof}
If $X$ is a principal $S$-bundle then let $(\mcal{U},\phi,\gamma)$ be
a trivialization of $X$ and let $S$ act on $X$ as in Proposition
\ref{prop:27}.  On $U_i$ define $\sigma_i: U_i\rightarrow X$
by $\sigma_i(u) = \phi_i\inv(u)$.  It is easy to see that $\sigma_i$
is a continuous section of $q$ on $U_i$. 

Now, suppose $S$ acts on $X$ as in the statement of the theorem and
that $\mcal{U}$ is an open cover of $S\unit$ such that there
are local sections $\sigma_i:U_i\rightarrow X$ of $q$.  We define
$\psi_i:p\inv(U_i)\rightarrow q\inv(U_i)$ by $\psi_i(s) = s\cdot
\sigma_i(p(s))$.  It is clear
that $\psi_i$ is continuous.  A straightforward argument shows that
$\psi_i$ is injective because the action is free, and that $\psi_i$ is
surjective because the action is orbit transitive.  Now suppose
$\psi_i(s_l)\rightarrow \psi_i(s)$.  By
definition we have $s_l \cdot \sigma_i(p(s_l)) \rightarrow s\cdot
\sigma_i(p(s))$.  Furthermore $q(s_l\cdot \sigma_i(p(s_l))) = p(s_l)$
for all $l$ and $q(s\cdot \sigma_i(p(s)))=p(s)$.  Since $q$ is continuous,
we have $p(s_l)\rightarrow p(s)$ and therefore
$\sigma_i(p(s_l))\rightarrow \sigma_i(p(s))$.  Since the action of $S$
is proper, this implies that we can pass to a subnet, relabel, and find
$t$ such that $s_l\rightarrow t$.  However, using the continuity of
the action, this implies $s_l\cdot\sigma_i(p(s_l))\rightarrow t\cdot
\sigma_i(p(s))$.  Using the fact that $X$ is Hausdorff and the action
is free, we have $s=t$.  It follows that $\psi_i$ is a
homeomorphism and we define the trivializing maps to be $\phi_i =
\psi_i\inv$.  Next, we need to compute the transition functions.
Suppose $s\in p\inv(U_{ij})$.  Then  
\begin{align*}
\phi_i\circ \phi_j\inv(s) &= \psi_i\inv\circ \psi_j(s) 
= \psi_i\inv(s\cdot\sigma_j(p(s))) \\
&= \psi_i\inv(\gamma_{ij}(p(s))s\cdot \sigma_i(p(s))) =
\gamma_{ij}(p(s))s
\end{align*}
where $\gamma_{ij}(u)$ is the unique element of $S$ such that 
$\gamma_{ij}(u) \cdot \sigma_i(u) = \sigma_j(u)$. We know $\gamma_{ij}(u)$ is 
guaranteed to exist because the action is orbit transitive and
that $\gamma_{ij}(u)$  is unique because the action is free.  It is
simple enough to show that $\gamma_{ij}$ is continuous.  Hence $X$ is
a principal $S$-bundle with trivialization $(\mcal{U},\phi,\gamma)$.  
\end{proof}

This next proposition is nice because it frees our idea of principal
bundle isomorphism from the hassle of having to keep track of local
trivializations.  It is also mildly remarkable that $\Omega$ is not
required to be a homeomorphism, or even a bijection.  

\begin{prop}
Suppose $X$ and $Y$ are principal $S$-bundles. Then $X$ and $Y$ are
isomorphic if and only if there exists a continuous map $\Omega: X\rightarrow
Y$ which is $S$-equivariant with respect to the actions of
$S$ on $X$ and $Y$. 
\end{prop}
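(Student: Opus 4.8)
The plan is to prove the two directions separately, with the forward direction being essentially trivial and the reverse direction carrying all the content. If $X$ and $Y$ are isomorphic via $(\mcal{W},\Omega,\beta)$, then $\Omega$ is a continuous map with $r\circ\Omega = q$; one checks directly from the defining identity $\psi_{\sigma(i)}\circ\Omega\circ\phi_{\rho(i)}\inv(s) = \beta_i(p(s))s$ together with the formula for the $S$-actions from Proposition \ref{prop:27} that $\Omega(s\cdot x) = s\cdot\Omega(x)$. This is a short local computation: write $x = \phi_{\rho(i)}\inv(t)$ for the appropriate $t$, expand both sides, and use that $S$ is abelian so that the $\beta_i$ factor commutes past the $s$.

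For the converse, suppose $\Omega: X\rightarrow Y$ is continuous and $S$-equivariant. First I would observe that equivariance forces $r\circ\Omega = q$: since the range map on an $S$-space is part of the data and $S$ acts within fibres, equivariance of $\Omega$ implies it intertwines the range maps, and because the actions are orbit transitive with $S\unit$ acting trivially on base points, $r(\Omega(x))$ and $q(x)$ must agree (more carefully, $r(\Omega(x)) = q(x)$ because $q(x)\in S\unit$ acts as the identity and $\Omega(q(x)\cdot x) = q(x)\cdot\Omega(x)$ pins down the fibre). Next, pick trivializations $(\mcal{U},\phi,\gamma)$ of $X$ and $(\mcal{V},\psi,\eta)$ of $Y$, pass to a common refinement $\mcal{W}$ with refining maps $\rho,\sigma$, and using Theorem \ref{thm:principalspace} let $\sigma_i^X(u) = \phi_{\rho(i)}\inv(u)$ and $\sigma_i^Y(u) = \psi_{\sigma(i)}\inv(u)$ be the associated local sections of $q$ and $r$ over $W_i$. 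Define $\beta_i(u)$ to be the unique element of $S$ with $\beta_i(u)\cdot\sigma_i^Y(u) = \Omega(\sigma_i^X(u))$; this exists by orbit transitivity (both sides lie over $u$, hence in the same orbit since the $Y$-action is orbit transitive) and is unique by freeness. Continuity of $\beta_i$ follows the same way as in the proof of Theorem \ref{thm:principalspace}: $\beta_i$ is obtained by composing the continuous map $u\mapsto(\Omega(\sigma_i^X(u)),\sigma_i^Y(u))$ with the inverse of the proper, free action map, which is a homeomorphism onto its image.

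It then remains to verify that $(\mcal{W},\Omega,\beta)$ satisfies the isomorphism identity of Definition \ref{def:18}, i.e. $\psi_{\sigma(i)}\circ\Omega\circ\phi_{\rho(i)}\inv(s) = \beta_i(p(s))s$ for $s\in p\inv(W_i)$. Writing $u = p(s)$, we have $\phi_{\rho(i)}\inv(s) = s\cdot\sigma_i^X(u)$ by the formula from Theorem \ref{thm:principalspace}, so $\Omega(\phi_{\rho(i)}\inv(s)) = \Omega(s\cdot\sigma_i^X(u)) = s\cdot\Omega(\sigma_i^X(u)) = s\cdot\beta_i(u)\cdot\sigma_i^Y(u) = \beta_i(u)s\cdot\sigma_i^Y(u)$ using equivariance and commutativity of $S$; applying $\psi_{\sigma(i)}$ and using $\psi_{\sigma(i)}(t\cdot\sigma_i^Y(p(t))) = t$ gives exactly $\beta_i(u)s$. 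The main obstacle — really the only nonobvious point — is establishing that $\Omega$ is a \emph{homeomorphism}, since Definition \ref{def:18} demands this; but this now follows for free, because the displayed identity exhibits $\psi_{\sigma(i)}\circ\Omega\circ\phi_{\rho(i)}\inv$ as the map $s\mapsto\beta_i(p(s))s$, which has continuous inverse $t\mapsto\beta_i(p(t))\inv t$, and hence $\Omega$ is locally a homeomorphism and therefore a homeomorphism. I expect the bookkeeping with refining maps to be the most error-prone part, but it is routine given the machinery already in place.
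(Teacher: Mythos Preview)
Your proposal is correct and lands on the same $\beta_i$ as the paper, but the presentation differs enough to be worth a comment. The paper works directly with the composite $\Omega_i := \psi_i\circ\Omega\circ\phi_i\inv : p\inv(U_i)\to p\inv(U_i)$; equivariance of $\Omega$ together with equivariance of the trivializing maps (Proposition~\ref{prop:27}(b)) gives $\Omega_i(st) = s\,\Omega_i(t)$, and then the one-line algebraic observation that any self-map of an abelian group satisfying this identity is left multiplication by its value at the identity yields $\beta_i(u) := \Omega_i(u)$ with continuity of $\beta_i$ immediate from continuity of $\Omega_i$. You instead pull the problem back to the $S$-space picture of Theorem~\ref{thm:principalspace}: local sections, orbit transitivity and freeness to define $\beta_i$, and properness to get its continuity. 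Unwinding your definition (apply $\psi_{\sigma(i)}$ to $\beta_i(u)\cdot\sigma_i^Y(u) = \Omega(\sigma_i^X(u))$) shows your $\beta_i(u)$ is exactly the paper's $\Omega_i(u)$, so the two arguments agree in substance. The paper's route is shorter and avoids the detour through properness; your route makes the geometry more explicit and would transfer more readily to a setting where one has the principal $S$-space axioms but no explicit trivializations on hand.
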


\begin{proof}
Let $X$ and $Y$ be as above with bundle maps $q$ and $r$, and
trivializations $(\mcal{U},\phi,\gamma)$ and $(\mcal{V},\psi,\eta)$,
respectively.  Elementary calculations show that if 
$(\mcal{W},\Omega,\beta)$ is a  principal bundle isomorphism from $X$
to $Y$ then $\Omega$ is equivariant.  

Now suppose $\Omega:X\rightarrow Y$ is a continuous equivariant map.
By passing to a common refinement we may assume without
loss of generality that $\mcal{U}=\mcal{V}$.  Given $U_i$ let $\Omega_i
= \psi_i \circ \Omega \circ \phi_i\inv$.  Since each of its component
maps preserves fibres, $\Omega_i$ does as well, and therefore
$\Omega_i|_{S_u}$ maps $S_u$ into $S_u$ for $u\in U_i$.  If $s,t\in S_u$ then 
\[
\Omega_i(st )=
\psi_i\circ\Omega(s\cdot\phi_i\inv(t)) = \psi_i(s\cdot \Omega(\phi_i\inv(t)) 
= s\Omega_i(t).
\]
Observe that given an abelian group homomorphism $h:H\rightarrow H$ such that 
$h(st)=sh(t)$ for all $s,t\in H$ we have  $h(s) = h(es) = h(e)s$.
Hence, $h$ is actually just left multiplication by $h(e)$.  Applying
this to the current situation we find that $\Omega_i|_{S_u}$ is 
left multiplication by $\Omega_i(u)$ on $S_u$.  Define $\beta_i$ on
$U_i$ by $\beta_i(u) = \Omega_i(u)$.  The function $\beta_i$ is a section
of $S$ on $U_i$ which is continuous because $\Omega_i$ is continuous.  Since
$\Omega_i$ is defined by left multiplication against
$\beta_i$, it follows immediately that $\Omega_i$ has a continuous
inverse given by left multiplication against $\beta_i\inv$.  Thus
$\Omega_i$ is a homeomorphism.  It is
straightforward to show that this implies that $\Omega$ is a
homeomorphism.  Furthermore, we know that for $s\in q\inv(U_i)$ 
\[
\psi_i \circ\Omega\circ\phi_i\inv(s) = \Omega_i(s) = \beta_i(p(s)) s.
\]
Hence $(\mcal{U},\Omega,\beta)$ is a principal bundle isomorphism of $X$ onto $Y$.
\end{proof}

\begin{remark}
\label{rem:1}
It is philosophically important to see that the theory of principal
$S$-bundles is an extension of the classical theory of principal group bundles.
Suppose $H$ is an abelian locally compact Hausdorff group and $X$ and $Y$ are
locally compact Hausdorff spaces.  Let $S = Y\times H$ be the trivial group
bundle. Then it is not difficult to show that 
$X$ is a principal $H$-bundle over $Y$ if and only if $X$ is a principal
$S$-bundle.  What's more, $H^n(S)\cong H^n(Y;H)$ and under this
identification $X$ generates the same cohomology class when viewed as
either a principal $S$-bundle or a principal $H$-bundle.
\end{remark}

\subsection{Locally $\sigma$-trivial Spaces}

As observed in Remark \ref{rem:1}, 
principal ``group bundle bundle'' theory is a natural extension of
classical principal group bundle theory.  
The real question is if there are principal
$S$-bundles which are not generated by principal $H$-bundles.
Fortunately, principal $S$-bundles are also an extension of the
notion of $\sigma$-trivial spaces as defined in \cite{locunitarystab}
and there are nontrivial examples given there.  First, however, we
define what it means to be $\sigma$-trivial.   

\begin{definition}
\label{def:20}
Suppose the abelian locally compact Hausdorff group $H$ acts on the
locally compact Hausdorff space $X$ and the stabilizers vary
continuously in $H$ with respect to the Fell topology.  We
shall say that $X$ is a locally $\sigma$-trivial space if $X/H$ is
Hausdorff and if every $x\in X$ has a $H$-invariant neighborhood $U$
such that there exists a homeomorphism 
$\phi:U\rightarrow (U/H\times H)/\cong$ where 
\[
(H\cdot x,s)\cong(H\cdot y,t)\ \text{if and only if $H\cdot x=H\cdot y$
  and $st\inv\in H_x.$}
\]
Furthermore we require that  
\begin{enumerate}
\item If $x\in U$ then $\phi(x) = [H\cdot x, s]$ for some $s\in H$
  and, 
\item If $x\in U$, $s\in H$ and $\phi(x) = [H\cdot x, t]$ then 
$\phi(s\cdot x) = [H\cdot x, st]$.
\end{enumerate}
\end{definition}

Our goal will be to construct an abelian group
bundle $S$ associated to $G$ and $X$.  Because this construction is
unimportant for what follows, we will omit some of the details. 

\begin{prop}
\label{prop:30}
Suppose the abelian locally compact Hausdorff group $H$ acts on the
locally compact Hausdorff space $X$.  Furthermore, suppose the
stabilizers vary continuously in $H$ and that $X/H$ is Hausdorff.
Define $S_{(X,H)} := (X/H\times H)/\cong$, often denoted $S$, 
where $\cong$ is as in Definition \ref{def:20}.  Then $S$ is
an abelian locally compact Hausdorff group
bundle with a Haar system whose unit space can be identified with $X/H$.  
The bundle map is given by $p([H\cdot x, s]) = H\cdot x$ and the
operations are 
\[
[H\cdot x, s][H\cdot x, t] := [H\cdot x, st],\quad\text{and}\quad
[H\cdot x, s]\inv := [H\cdot x, s\inv].
\]
The fibre $S_{H\cdot x}$ over $H\cdot x$ is (isomorphic to) $H/H_x$.  
\end{prop}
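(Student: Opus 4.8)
The plan is to check, in order, the algebraic structure of $S$, its topology, and the existence of a Haar system. Since $H$ is abelian we have $H_x = H_{s\cdot x}$ for all $s\in H$, so the stabiliser depends only on the orbit; write $H_{H\cdot x}$ for it. A direct computation then shows that $\cong$ is an equivalence relation on $X/H\times H$ and that $p([H\cdot x,s]):=H\cdot x$, the fibrewise product $[H\cdot x,s][H\cdot x,t]:=[H\cdot x,st]$, and $[H\cdot x,s]\inv:=[H\cdot x,s\inv]$ are well defined on $\cong$-classes --- for the product and inverse precisely because, $H$ being abelian, $st(s't')\inv=(ss'\inv)(tt'\inv)\in H_{H\cdot x}$ whenever $ss'\inv,tt'\inv\in H_{H\cdot x}$. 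These operations satisfy the group-bundle axioms fibrewise, $S$ is abelian because $H$ is, the map $H\cdot x\mapsto[H\cdot x,e]$ identifies $X/H$ with $S\unit$, and the fibre over $H\cdot x$ is the abelian group $H/H_{H\cdot x}$ via $sH_{H\cdot x}\mapsto[H\cdot x,s]$.

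Next I topologise $S$ with the quotient topology coming from $q\colon X/H\times H\to S$, and the crucial lemma is that $q$ is an \emph{open} map. Because the orbit map $X\to X/H$ is open and $x\mapsto H_x$ is continuous and orbit-invariant, it factors through a continuous map from $X/H$ into the closed subgroups of $H$ with the Fell topology. Suppose $(H\cdot x_0,s_0)$ lies in the $\cong$-saturation of a basic box $V\times W$, so $s_0=h_0w_0$ for some $h_0\in H_{H\cdot x_0}$, $w_0\in W$. If $(H\cdot x_\lambda,s_\lambda)\to(H\cdot x_0,s_0)$ with each $H\cdot x_\lambda\in V$, Fell convergence of the stabilisers produces $h_\lambda\in H_{H\cdot x_\lambda}$ with $h_\lambda\to h_0$, whence $s_\lambda h_\lambda\inv\to s_0h_0\inv=w_0\in W$; thus eventually $s_\lambda\in H_{H\cdot x_\lambda}W$ and $(H\cdot x_\lambda,s_\lambda)$ lies in the saturation. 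Hence saturations of basic boxes are open and $q$ is open.

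Being an open continuous surjection from a locally compact Hausdorff space, $q$ makes $S$ locally compact, and $S$ is Hausdorff because the graph of $\cong$ is closed in $(X/H\times H)^2$: the condition $H\cdot x=H\cdot y$ is closed since $X/H$ is Hausdorff, and on that set $st\inv\in H_{H\cdot x}$ is closed because $\{(H\cdot x,r):r\in H_{H\cdot x}\}$ is closed in $X/H\times H$ (the remaining half of Fell convergence, equivalently continuity of the action together with Hausdorffness of $X$). Continuity of $p$, of multiplication, and of inversion then follows from the universal property of the quotient, the relevant quotient maps --- for $S$ and for the fibred product $S\ast_pS$ --- being open by the same style of argument; and openness of $q$ shows that each bijection $H/H_{H\cdot x}\to p\inv(H\cdot x)$ above is a homeomorphism, since $q\bigl((X/H)\times W\bigr)$ is open in $S$ and meets the fibre over $H\cdot x$ in exactly the image of $W$. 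This yields all the asserted structure except the Haar system.

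For the Haar system, observe that $p\circ q$ is the open projection $X/H\times H\to X/H$ while $q$ is an open surjection, so $p$ itself is open. The isotropy bundle $\{(H\cdot x,h):h\in H_{H\cdot x}\}$ likewise has open bundle map (continuity of the stabilisers makes $\{H\cdot x:H_{H\cdot x}\cap W\neq\emptyset\}$ open for $W$ open), so it carries a continuous Haar system $\{\beta^{H\cdot x}\}$; fixing a Haar measure on $H$ and applying Weil's integration formula pushes these down to Haar measures $\lambda^{H\cdot x}$ on the fibres $H/H_{H\cdot x}$, and one checks that left invariance is inherited from $H$ and that $H\cdot x\mapsto\int_Sf\,d\lambda^{H\cdot x}$ is continuous for $f\in C_c(S)$ using continuity of the stabilisers. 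I expect this last step --- actually producing the Haar system, as opposed to verifying the bundle axioms --- to be the main obstacle; the full argument is carried out in \cite{mythesis}.
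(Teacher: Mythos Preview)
Your argument is correct and, for the bundle structure itself, essentially mirrors the paper's outline: verify the algebra, prove the quotient map $q$ is open using Fell convergence of the stabilisers, deduce that $S$ is locally compact Hausdorff with continuous operations and open bundle map $p$, and identify the fibres with $H/H_x$.

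The one genuine divergence is the Haar system. You regard its construction as ``the main obstacle'' and propose to build it explicitly by first producing a Haar system on the isotropy bundle and then pushing it to the fibres via Weil's formula. The paper avoids all of this: once $p$ is shown to be open (which you have already done, since $p\circ q$ is open and $q$ is an open surjection), the existence of a Haar system is immediate from \cite[Lemma~1.3]{renaultgcp}, which guarantees a Haar system on any locally compact Hausdorff group bundle with open bundle map. So what you flag as the hard step is in fact a one-line citation; your explicit construction would work, but it is considerably more labour than required.
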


\begin{proof}
Define $S$ as in the statement of the proposition. A relatively simple
argument shows that $S$ is locally compact Hausdorff and that the
quotient map $Q:X/H\times H\rightarrow S$ is open.  It is similarly
straightforward to show that,
given the operations above, $S$ is algebraically a group bundle over $X/H$ with
bundle map $p$ given by $p([H\cdot x, s])= H\cdot x$.  We now assert
that the operations are continuous and that $p$ is open.  This can be
proved by using the fact that $Q$ is open.  It then
follows from \cite[Lemma 1.3]{renaultgcp} that $S$ has a Haar system.  
Given $H\cdot x\in X/H$ we have $S_{H\cdot x} = \{[H\cdot x,s ]: s\in
H\}$.  We can define a continuous surjective homomorphism 
$\phi:H\rightarrow S_{H\cdot x}$ by
$\phi(s) = [H\cdot x, s]$.  Straightforward arguments show
that $\phi$ is open, and hence it is clear from the definition of
$\cong$ that $\phi$ factors to an isomorphism of
$H/H_x$ with $S_{H\cdot x}$.  Since $H/H_x$ is clearly abelian, this
proves that $S$ has abelian fibres and we are done.  
\end{proof}

The reason we went through all of this rigmarole is that given a
locally $\sigma$-trivial system $(H,X)$ we would like to show that $X$ is a
principal $S_{(H,X)}$-bundle.  

\begin{prop}
\label{prop:31}
Suppose $H$ is an abelian locally compact Hausdorff group acting on a
locally compact Hausdorff space $X$ with continuously varying
stabilizers such that $X/H$ is Hausdorff.  If $X$ is locally 
$\sigma$-trivial then $X$ is a principal $S_{(H,X)}$-bundle. 
\end{prop}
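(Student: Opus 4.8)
The plan is to produce an explicit trivialization $(\mcal{U},\phi,\gamma)$ of $X$, viewed as a bundle over $S\unit = X/H$ via the orbit map, directly from the local $\sigma$-triviality data, and then verify the axioms in the definition of a principal $S$-bundle. Write $S = S_{(H,X)}$, let $q:X\to X/H$ be the orbit map (open, and serving as the bundle map), and let $Q:X/H\times H\to S$ be the open quotient map of Proposition \ref{prop:30}; for $u\in X/H$ and $s\in H$ write $[u,s]$ for $Q(u,s)\in S$ (so $[u,s]=[H\cdot x,s]$ when $u=H\cdot x$).

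First I would fix the trivializing maps. Using local $\sigma$-triviality, choose an open cover $\{U_i\}_{i\in I}$ of $X$ by $H$-invariant open sets together with homeomorphisms $\phi_i:U_i\to(U_i/H\times H)/\cong$ satisfying conditions (a) and (b) of Definition \ref{def:20}. Since $q$ is open each $U_i/H$ is open in $X/H$, so $\mcal{U}=\{U_i/H\}_{i\in I}$ is an open cover of $S\unit$, and $U_i = q\inv(U_i/H)$. Because $Q$ is open, restricting it to $U_i/H\times H$ identifies $(U_i/H\times H)/\cong$ with the open set $S|_{U_i/H}=p\inv(U_i/H)\subseteq S$; under this identification $\phi_i$ becomes a homeomorphism $q\inv(U_i/H)\to p\inv(U_i/H)$, and condition (a) — which says $\phi_i(x)=[H\cdot x,s]$ for a suitable $s$ — gives $p\circ\phi_i=q$.

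Next the transition maps. Put $\sigma_i:U_i/H\to X$, $\sigma_i(u)=\phi_i\inv([u,e])$; since $p\circ\phi_i=q$ this is a continuous section of $q$ over $U_i/H$. From condition (b), $\phi_j(s\cdot\sigma_j(u))=[u,s]$ for $s\in H$, hence $\phi_j\inv([u,s])=s\cdot\sigma_j(u)$; and, over $U_{ij}$, if we set $\gamma_{ij}:=\phi_i\circ\sigma_j:U_{ij}\to S$ — a continuous section of $p$ over $U_{ij}$ since $p(\phi_i(\sigma_j(u)))=q(\sigma_j(u))=u$ — then applying condition (b) to $\phi_i$ gives, for every $\sigma=[u,s]\in p\inv(U_{ij})$,
\[
\phi_i\circ\phi_j\inv(\sigma)=\phi_i(s\cdot\sigma_j(u))=[u,s]\,\gamma_{ij}(u)=\gamma_{ij}(p(\sigma))\,\sigma,
\]
the last step using that $S$ is abelian. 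This is precisely the transition identity, so $(\mcal{U},\phi,\gamma)$ is a trivialization and $X$ is a principal $S_{(H,X)}$-bundle.

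The two steps needing care are the identification of $(U_i/H\times H)/\cong$ with $p\inv(U_i/H)$, which rests on the openness of $Q$ (Proposition \ref{prop:30}) and of $q$, and the bookkeeping with condition (b) of Definition \ref{def:20} that turns $\phi_i\circ\phi_j\inv$ into multiplication by a continuous section; both are routine but must be written carefully, and abelianness of $S$ is what lets one freely switch between left and right multiplication. One could instead deduce the proposition from Theorem \ref{thm:principalspace}, transporting the $H$-action to the $S$-action $[H\cdot x,s]\cdot x:=s\cdot x$, but that route additionally requires proving this action proper, which — properness being a global condition — is the genuinely delicate point there.
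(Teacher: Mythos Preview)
Your proposal is correct and follows essentially the same route as the paper: both identify $p\inv(U_i/H)$ with $(U_i/H\times H)/\cong$, define $\gamma_{ij}$ as $\phi_i\circ\phi_j\inv$ evaluated at the identity section (your $\phi_i\circ\sigma_j$ is exactly the paper's $\gamma_{ij}(H\cdot x)=\phi_i\circ\phi_j\inv([H\cdot x,e])$), and verify the transition identity via condition~(b) of Definition~\ref{def:20}. Your write-up is slightly more explicit about the roles of the openness of $Q$ and $q$ and about where abelianness enters, and your closing remark on the alternative via Theorem~\ref{thm:principalspace} is a reasonable aside, but the core argument is the same.
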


\begin{proof}
Let $q:X\rightarrow X/H$ be the quotient map.  We know from Definition
\ref{def:20} that if $x\in X$ then there is an $H$-invariant
neighborhood $U$ that is homeomorphic to $U/H\times H/\cong$.  If we
let $V = U/H$ then $V$ is an open neighborhood of $H\cdot x$ and
$q\inv (V) = U$. Let $p$ be the bundle map for $S$ and
observe that 
\[
p\inv(V) = \{[H\cdot x,s]\in S:H\cdot x\in V\} =  U/H\times H/\cong. 
\]
Thus we have a homeomorphism $\phi_{V}:q\inv(V)\rightarrow p\inv(V)$.  
Find one of these neighborhoods for every
$x\in X$ and use them to build an open cover $\mcal{V}$ of $X/H$.  
For $V_i$ in this open cover let $\phi_i= \phi_{V_i}$.  Given
$V_{ij}$ define $\gamma_{ij}:U_{ij}\rightarrow S$ by
$\gamma_{ij}(H\cdot x) = \phi_i\circ\phi_j\inv([H\cdot x, e])$ where
$e$ is the unit in $H$.  It is
clear that $\gamma_{ij}$ is a continuous section on $V_{ij}$.  Since
$\gamma_{ij}$ is a section, we can find a function 
$\tilde{\gamma}_{ij}$ from $V_{ij}$ into $H$ such that
$\gamma_{ij}(H\cdot x) = [H\cdot x, \tilde{\gamma}_{ij}(H\cdot x)]$.  
Suppose $[H\cdot x, s]\in p\inv(V_{ij})$.  Then, using the equivariance
condition of Definition \ref{def:20}, we have 
\begin{align*}
\phi_i\circ\phi_j\inv([H\cdot x, s]) &= 
\phi_i(s\cdot \phi_j\inv([H\cdot x,e])) = [H\cdot x,
\tilde{\gamma}_{ij}(H\cdot x)s] \\ 
&= \gamma_{ij}(H\cdot x) [H\cdot x,s].
\end{align*}
Thus $X$ is a principal $S$-bundle with trivialization
$(\mcal{V},\phi,\gamma)$.  
\end{proof}

\begin{remark}
The upshot of Proposition \ref{prop:31} is that every nontrivial example of a
locally $\sigma$-trivial space given in \cite{locunitarystab} is an
example of a nontrivial principal $S$-bundle.  This shows that such
bundles do exist, and it then follows from 
Theorem \ref{prop:principcohom} that there are group bundles with
nontrivial cohomology.  
\end{remark}

\begin{remark}
In \cite{locunitarystab} a $\sigma$-trivial space is said to be 
locally liftable if given a 
continuous section $c:U\rightarrow S$ then there exists $V\subset U$
and  a
continuous map $\tilde{c}:V \rightarrow H$ such that $c(H\cdot x) = [H\cdot x,
\tilde{c}(H\cdot x)]$.  Locally $\sigma$-trivial {\em bundles} are defined to
be locally $\sigma$-trivial spaces which are also locally liftable.
The reason for this extra requirement has to do with finding a
cohomological invariant for $X$. Let $\mcal{T}$ be the sheaf
defined for $U\subset X/H$ by $\mcal{T}(U) = C(U,H)$ and 
$\mcal{R}$ be the subsheaf of $\mcal{T}$ where $\mcal{R}(U)$ is the
subset of $\mcal{T}(U)$ such that $f(H\cdot x)\in H_x$ for all $x$.  Then sheaf
cohomological considerations will show that we can construct a
quotient sheaf $\mcal{T}/\mcal{R}$ and an associated cohomology
$H^n(X/H; \mcal{T}/\mcal{R})$.  Given a $\sigma$-trivial space one
would like to use the transition maps $\gamma_{ij}$, as defined in
the proof of Proposition \ref{prop:31}, to construct an element of
$H^1(X/H;\mcal{T}/\mcal{R})$.  The problem is that while $\gamma_{ij}$
is a continuous section of $U_{ij}$ into $S$ the associated map
$\tilde{\gamma}_{ij}:U_{ij}\rightarrow H$ may not be continuous.  If
$\tilde{\gamma}_{ij}$ is not continuous then it doesn't define an
element of $\mcal{T}(U_{ij})$ and we cannot construct the appropriate
cohomology element.  However, if $X$ is required to be locally
liftable then, by passing to a smaller open set, we can guarantee that
$\tilde{\gamma}_{ij}$ is a continuous function.  As such it defines an
element of $\mcal{T}(U_{ij})$ and hence a cohomology element in
$H^1(X/H;\mcal{T}/\mcal{R})$.  In fact, it is shown in
\cite{locunitarystab} that this construction leads to a one-to-one
correspondence between locally $\sigma$-trivial {\em bundles} with a
fixed orbit space $X/H$ and $H^1(X/H;\mcal{T}/\mcal{R})$.  

This is an artificial restriction in our setting.  The
$\gamma_{ij}$ can always be used to define an element of
$H^1(S_{(X,H)})$, regardless of whether $\sigma$ is locally liftable or
not.  It is comforting to observe the following, however.  Let
$\mcal{S}$ be the sheaf of local sections of $S$ so that $H^n(S) =
H^n(X/H;\mcal{S})$ by definition.  It is straightforward to show that if
$\sigma$ is locally liftable then we get a short exact sequence of
sheaves 
\[
0\rightarrow \mcal{R}\rightarrow \mcal{T}\rightarrow
\mcal{S}\rightarrow 0
\]
and that $H^n(X/H;\mcal{S})$ is naturally isomorphic to
$H^n(X/H,\mcal{T}/\mcal{R})$.  Furthermore, once one sorts out all of
the various constructions, it is clear that the different
cohomological invariants of a $\sigma$-trivial bundle are identified
under this isomorphism. 
\end{remark}

\section{Groupoid Crossed Products}
\label{sec:group-cross-prod}

For the rest of the paper we will assume that $G$, or $S$, is a second
countable locally compact Hausdorff groupoid, or group bundle, with a
Haar system.  We will let $A$ be a separable $C_0(G\unit)$-algebra and
$\mcal{A}$ be its associated upper-semicontinuous bundle (usc-bundle).
In general we will use the $C_0(X)$-algebra 
notation from \cite[Appendix C]{tfb2} or \cite[Section 3.1]{mythesis}
and refer readers to these sources as references.  Next, let 
$(A,G,\alpha)$ be a groupoid dynamical system as defined in
\cite{renaultequiv} or \cite[Section 3.2]{mythesis}.  We will assume that
the reader is familiar with groupoid dynamical systems, although we
take the time here to establish some of the basics.  First, recall
that $\alpha$ is given by a collection of isomorphisms
$\{\alpha_\gamma\}_{\gamma\in G}$ such that
$\alpha_\gamma:A(s(\gamma))\rightarrow A(r(\gamma))$, 
$\alpha_{\gamma\eta}=\alpha_\gamma\circ\alpha_\eta$ whenever
$\gamma$ and $\eta$ are composable, and $\gamma\cdot
a := \alpha_\gamma(a)$ defines a continuous action of $G$ on
$\mcal{A}$.  We then form the convolution algebra
$\Gamma_c(G,r^*\mcal{A})$ given the operations
\[
f*g(\gamma) = \int_G f(\eta)\alpha_\eta(g(\eta\inv\gamma))
d\lambda^{r(\gamma)}(\eta) \quad\text{and}\quad
f^*(\gamma) = \alpha_\gamma(g(\gamma\inv)^*).
\]

Next, suppose we have a representation $(U,X*\mfrk{H},\mu)$ of $G$ and
a $C_0(G\unit)$-linear representation $\pi$ of $A$ on
$L^2(X*\mfrk{H},\mu)$.  It follows from some
fairly heavy representation theory \cite[Section 7]{renaultequiv},
\cite[Section 3.3]{mythesis} that there exists a collection of
representations $\pi_u:A(u)\rightarrow B(\mcal{H}_u)$ such that
$\pi(a)$ is equal to the direct integral
$\int_{G\unit}^\oplus \pi_u(a(u))\,d\mu(u)$ for all $a\in A$.  This
decomposition is an essential aspect of $C_0(G\unit)$-linear
representations of $A$ and will be used in Section \ref{sec:unitary}.
Next, recall that we can use the quasi-invariant measure $\mu$ to
induce a measure $\nu = \int_{G\unit}\lambda^u d\mu(u)$ on $G$.  We
say that $(\pi,U,X*\mfrk{H},\mu)$ is a {\em covariant} representation
if the relation 
\begin{equation}
\label{eq:cov}
U_\gamma \pi_{s(\gamma)}(a) =
\pi_{r(\gamma)}(\alpha_\gamma(a))U_\gamma\quad\text{for all $a\in
  A(s(\gamma))$}
\end{equation}
holds $\nu$-almost everywhere on $G$.  We can then form the {\em
  integrated} representation $\pi\rtimes U$ of
$\Gamma_c(G,r^*\mcal{A})$ on $L^2(X*\mfrk{H},\mu)$ by 
\begin{equation}
\label{eq:36}
\pi\rtimes U(f)h(u) = \int_G \pi_u(f(\gamma))U_\gamma
h(s(\gamma))\Delta(\gamma)\neghalf d\lambda^u(\gamma)
\end{equation}
where $\Delta$ is the modular function associated to $\mu$. 
It is then either a definition or a theorem that the crossed product
$A\rtimes G$ is the completion of $\Gamma_c(G,r^*\mcal{A})$ with
respect to the universal norm arising from the integrated covariant
representations.  

\begin{remark}
\label{rem:4}
This construction mirrors the construction of groupoid
$C^*$-algebras.  In the groupoid case the integrated form of a unitary
representation is given on $C_c(G)$ by 
\[
U(f)h(u) = \int_G f(\gamma)U_\gamma h(s(\gamma))
\Delta(\gamma)\neghalf d\lambda^u(\gamma)
\]
and $C^*(G)$ is the completion of $C_c(G)$ with respect to the
universal norm determined by these representations. 
\end{remark}

With the exception of this section we will generally be interested
only in actions of group bundles on $C^*$-algebras.  As such, most of
our crossed products will have extra structure.  Recall from
\cite[Proposition 1.2]{specpaper} that if $S$ is a group bundle and
$(A,S,\alpha)$ is a dynamical system then $A\rtimes S$ is a
$C_0(S\unit)$-algebra and that restriction from
$\Gamma_c(S,p^*\mcal{A})$ to $C_c(S_u,A(u))$ factors to an isomorphism
of the fibre $A\rtimes S(u)$ with the group crossed product
$A(u)\rtimes S_u$.  In fact, this identification is even more robust.
Recall that if $A$ is a $C_0(X)$-algebra and $U\subset X$ is open then
we define $A(U)$ to be the section algebra $\Gamma_0(U,\mcal{A})$
\cite[Section 1]{inducpaper}.  

\begin{prop}
\label{prop:88}
Suppose $(A,S,\alpha)$ is a groupoid dynamical system, $S$ is a group
bundle, and that $U$ is
an open subset of $S\unit$.  Then $A\rtimes_\alpha S(U)$ and
$A(U)\rtimes_\alpha S|_U$ are isomorphic as $C_0(U)$-algebras.  
\end{prop}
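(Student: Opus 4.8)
The plan is to build the isomorphism directly at the level of the dense subalgebras and then show it is isometric for the relevant norms, so that it extends to the completions. Recall from the cited \cite[Proposition 1.2]{specpaper} that $A \rtimes_\alpha S$ is a $C_0(S\unit)$-algebra, so $A\rtimes_\alpha S(U) = \Gamma_0(U, \mathcal{B})$ where $\mathcal{B}$ is the usc-bundle associated to $A\rtimes_\alpha S$; by the fibrewise identification already recorded there, $(A\rtimes_\alpha S)(u) \cong A(u)\rtimes_\alpha S_u$ for each $u$. On the other side, $A(U) = \Gamma_0(U,\mathcal{A})$ is itself a $C_0(U)$-algebra with fibres $A(u)$ for $u \in U$, the restricted group bundle $S|_U$ has unit space $U$ and the same fibres $S_u$, and the restricted action $\alpha$ makes sense fibrewise; hence $A(U)\rtimes_\alpha S|_U$ is also a $C_0(U)$-algebra, with fibre $(A(U)\rtimes_\alpha S|_U)(u) \cong A(u)\rtimes_\alpha S_u$ over each $u \in U$ by the same fibrewise result applied to the system $(A(U), S|_U, \alpha)$.

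First I would define a map $\Phi$ on the level of compactly supported sections: given $f \in \Gamma_c(S|_U, p^*\mathcal{A})$, note that since $U$ is open in $S\unit$ the space $p\inv(U) = S|_U$ is open in $S$, so $f$ extends by zero to an element $\tilde f \in \Gamma_c(S, p^*\mathcal{A})$, and this extension is compatible with convolution and involution because all the integrals in the formulas for $f * g$ and $f^*$ are over the fibres $S_u$, which lie entirely inside $S|_U$ when $u \in U$. Thus $\Phi: f \mapsto \tilde f$ is a $*$-homomorphism from $\Gamma_c(S|_U, p^*\mathcal{A})$ into $\Gamma_c(S, p^*\mathcal{A})$ landing in the ideal corresponding to $A\rtimes_\alpha S(U)$. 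It is also $C_0(U)$-linear, since multiplication by $\varphi \in C_0(U)$ acts by $(\varphi \cdot f)(s) = \varphi(p(s)) f(s)$ on both sides. I would then check that $\Phi$ respects the fibrewise pictures: for $u \in U$, restricting $\tilde f$ to $C_c(S_u, A(u))$ gives exactly the restriction of $f$ to $C_c(S_u, A(u))$, so under the identifications $(A(U)\rtimes_\alpha S|_U)(u) \cong A(u)\rtimes_\alpha S_u \cong (A\rtimes_\alpha S)(u)$ the induced fibre map is the identity.

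The key step is to conclude that $\Phi$ extends to an isometric isomorphism of the completions. Since $\Phi$ is a $C_0(U)$-linear $*$-homomorphism between $C_0(U)$-algebras (after completing the domain to $A(U)\rtimes_\alpha S|_U$) that induces an isomorphism on every fibre, it is automatically an isometric isomorphism: this is the standard fact that a $C_0(X)$-linear $*$-homomorphism of $C_0(X)$-algebras which is fibrewise an isomorphism is itself an isomorphism, cf.\ \cite[Appendix C]{tfb2}. The one point that needs care — and which I expect to be the main obstacle — is verifying that the completion of $\Gamma_c(S|_U, p^*\mathcal{A})$ in the universal crossed-product norm really is $A(U)\rtimes_\alpha S|_U$ and not something smaller, i.e.\ that the relevant covariant representations of $(A(U), S|_U, \alpha)$ are not artificially cut down by the extension-by-zero embedding. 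This is handled by the group-bundle structure: every covariant representation and hence the universal norm on $A(U)\rtimes_\alpha S|_U$ decomposes as a "fibrewise" object over $U$ by \cite[Proposition 1.2]{specpaper}, and likewise the norm inherited from $A\rtimes_\alpha S$ on the ideal $A\rtimes_\alpha S(U)$ decomposes over $U$ with the same fibres, so the two norms agree on $\Gamma_c(S|_U, p^*\mathcal{A})$. Once this norm-matching is in hand, $\Phi$ extends to the desired $C_0(U)$-algebra isomorphism, completing the proof.
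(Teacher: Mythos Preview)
Your proof is correct and uses the same underlying map as the paper---extension by zero from $\Gamma_c(S|_U,p^*\mathcal{A})$ into $\Gamma_c(S,p^*\mathcal{A})$---but you justify the crucial norm-matching step differently. The paper identifies $A\rtimes_\alpha S(U)$ with the ideal $I_C$ (for $C=S\unit\setminus U$) via general $C_0(X)$-algebra theory, invokes \cite[Theorem~3.3]{inducpaper} as a black box to see that extension by zero gives an isomorphism of $A(U)\rtimes_\alpha S|_U$ onto the closure $\Ex(U)$ of $\Gamma_c(S|_U,p^*\mathcal{A})$ in $A\rtimes_\alpha S$, and then argues that $I_C=\Ex(U)$ by approximation. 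You instead exploit the group-bundle hypothesis directly: since both $A(U)\rtimes_\alpha S|_U$ and $A\rtimes_\alpha S(U)$ are $C_0(U)$-algebras whose fibre over $u$ is $A(u)\rtimes_\alpha S_u$, the sup-norm formula $\|f\|=\sup_{u\in U}\|f|_{S_u}\|_{A(u)\rtimes S_u}$ holds in both completions, so the norms agree on the dense subalgebra and the fibrewise-isomorphism criterion finishes the job. Your route is more self-contained (it avoids the external citation and the $I_C=\Ex(U)$ approximation), at the cost of being specific to group bundles; the paper's route is more modular and would adapt to any $S$-invariant open set in a general groupoid crossed product.
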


\begin{proof}
Let $C=S\unit\setminus U$ and 
recall that $A\rtimes S$ is a $C_0(S\unit)$-algebra.  It follows from
some general $C_0(X)$-algebra theory that $A\rtimes S(U)$ is isomorphic to the
ideal 
\[
I_C = \cspn\{\phi\cdot f : \phi\in
C_0(S\unit),f\in\Gamma_c(S,p^*\mcal{A}), \phi(C) = 0\}
\]
via the inclusion map $\iota_1:A\rtimes S(U)\rightarrow A\rtimes S$
where we view both spaces as section algebras of their associated bundle.  
Because $S$ acts trivially on its unit space, $U$ is $S$-invariant and
it follows from \cite[Theorem 3.3]{inducpaper} that the inclusion map
$\iota_2:\Gamma_c(S|_U,p^*\mcal{A})\rightarrow \Gamma_c(S,p^*\mcal{A})$
extends to an isomorphism of $A(U)\rtimes S|_U$ with $\Ex(U)$ where
$\Ex(U)$ is the closure of $\Gamma_c(S|_U,p^*\mcal{A})$ in $A\rtimes S$.
A standard approximation argument shows that $I_C = \Ex(U)$.  
Thus $\iota_2\inv\circ\iota_1$ yields the desired isomorphism.  The
fact that it is $C_0(U)$-linear follows from a computation.  
\end{proof}

\begin{remark}
\label{rem:2}
The identification made in Proposition \ref{prop:88} extends to the
spectrum.  Recall from \cite[Proposition C.5]{tfb2} that $(A\rtimes
S)\sidehat$ can be identified as a set with the disjoint union
$\coprod_{u\in S\unit} (A(u)\rtimes S_u)\sidehat$ and that there is a
continuous map $q:(A\rtimes S)\sidehat\rightarrow S\unit$ such that
$q(\pi) = u$ if and only if $\pi$ factors to a representation of
$A(u)\rtimes S_u$.  Suppose $U\subset S\unit$ is open.  Then each of
$q\inv (U)$, $(A\rtimes S(U))\sidehat$ and $(A(U)\rtimes
S|_U)\sidehat$ can be identified setwise with $\coprod_{u\in U}(A(u)\rtimes
S_u)\sidehat$.  It is a matter of sorting out definitions and applying
Proposition \ref{prop:88} to see that all three algebras induce the
same topology on the disjoint union.  
\end{remark}

Groupoid dynamical systems 
are fairly common and in particular arise naturally
from groupoid actions on spaces, which we shall demonstrate after
proving the following

\begin{lemma}
\label{lem:26}
Suppose $X$, $Y$  and $Z$ are locally compact Hausdorff spaces and that
$\sigma:Y\rightarrow X$ and $\tau:Z\rightarrow X$ are continuous
surjections. Let $Z*Y = \{(z,y)\in Z\times Y : \tau(z) =
\sigma(y)\}$.  Then the map $\iota:C_0(Z*Y)\rightarrow \tau^*(C_0(Y))$
such that $\iota(f)(z)(y) = f(z,y)$ is an isomorphism. 
Furthermore, $\iota(f)$ is compactly supported if $f$ is and $\iota$
preserves convergence with respect to the inductive limit topology.  
\end{lemma}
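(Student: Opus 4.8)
The plan is to define $\iota$ explicitly and verify it has a well-defined two-sided inverse, then check the analytic side conditions. First I would check that $\iota$ lands where claimed: given $f \in C_0(Z*Y)$ and $z \in Z$, the section $y \mapsto f(z,y)$ is defined on the fibre $\sigma^{-1}(\tau(z)) = Y_{\tau(z)}$, which is exactly the fibre of $\tau^*(C_0(Y))$ over $z$ (recall $\tau^*(C_0(Y))(z) = C_0(Y_{\tau(z)})$ by the standard description of pullback $C_0(X)$-algebras). Continuity of $z \mapsto \iota(f)(z)$ into the pullback bundle, and the fact that $\iota(f) \in \Gamma_0$, both follow from continuity of $f$ on $Z*Y$ together with the assumption that $f$ vanishes at infinity; the key observation is that $Z*Y$ is the fibred product whose topology is the subspace topology from $Z \times Y$, so a net $(z_\lambda, y_\lambda) \to (z,y)$ in $Z*Y$ is just a pair of convergent nets with matching base points, which is precisely what is needed to match the topology on the total space of $\tau^*(C_0(Y))$.

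Next I would construct the inverse. Given $g \in \tau^*(C_0(Y))$, define $\bar\iota(g)(z,y) = g(z)(y)$ for $(z,y) \in Z*Y$; this makes sense because $y \in Y_{\tau(z)}$, the domain of $g(z)$. One checks $\bar\iota(g)$ is continuous on $Z*Y$ and vanishes at infinity (again using the subspace topology and the vanishing-at-infinity condition built into $\Gamma_0$ of the pullback bundle), so $\bar\iota(g) \in C_0(Z*Y)$. That $\iota$ and $\bar\iota$ are mutually inverse, and that $\iota$ is a $*$-homomorphism, is immediate from the pointwise formulas. Being a bijective $*$-homomorphism between $C^*$-algebras, $\iota$ is automatically isometric, hence an isomorphism.

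For the remaining assertions: if $f$ has compact support $K \subset Z*Y$, then $\iota(f)(z) = 0$ unless $z$ lies in the image of $K$ under the first-coordinate projection, which is compact; and for each such $z$ the section $\iota(f)(z)$ is supported in the (compact) image of $K$ under the second projection. So $\iota(f)$ is compactly supported as a section. Finally, preservation of inductive-limit convergence: if $f_\lambda \to f$ uniformly with supports in a common compact set $K \subset Z*Y$, then $\sup_z \|\iota(f_\lambda)(z) - \iota(f)(z)\|_\infty = \sup_{(z,y) \in Z*Y} |f_\lambda(z,y) - f(z,y)| \to 0$ and all $\iota(f_\lambda)$ are supported in the common compact set coming from $K$; the converse direction is symmetric via $\bar\iota$. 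I expect the main obstacle to be purely bookkeeping — namely pinning down that the total space topology on $\tau^*(C_0(Y))$ really does correspond fibrewise to convergence in $Z*Y$, so that continuity transfers in both directions; once that matching is made precise, everything else is a routine unwinding of the pointwise definitions.
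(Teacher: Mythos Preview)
Your strategy is sound and genuinely different from the paper's. You construct an explicit inverse $\bar\iota$ and appeal to the fact that a bijective $*$-homomorphism between $C^*$-algebras is automatically an isomorphism; the paper instead works only with $\iota$ on $C_c(Z*Y)$, shows it is isometric (immediate from $\|\iota(f)(z)\| = \sup_{y\in Y_{\tau(z)}}|f(z,y)|$), shows its range is dense via a Stone--Weierstrass--type criterion (\cite[Proposition~C.24]{tfb2}), and then extends by continuity. Your route is more conceptual and symmetric; the paper's is more mechanical but only requires checking continuity in one direction.

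One caution: the step you flag as ``purely bookkeeping'' is in fact the whole content of the lemma, and your justification for it is not quite right. Your sentence ``a net $(z_\lambda, y_\lambda) \to (z,y)$ in $Z*Y$ is \ldots\ precisely what is needed to match the topology on the total space of $\tau^*(C_0(Y))$'' conflates two different objects: points of $Z*Y$ are pairs $(z,y)$, while points of the bundle $\tau^*\mcal{C}$ are pairs $(z,g)$ with $g \in C_0(Y_{\tau(z)})$. To verify that $\iota(f)$ is a continuous section you must invoke the actual characterisation of convergence in an upper-semicontinuous bundle, e.g.\ \cite[Proposition~C.20]{tfb2}. The paper does exactly this: it first checks continuity on elementary tensors $g\otimes h$ with $g\in C_c(Z)$, $h\in C_c(Y)$, where $\iota(g\otimes h)(z) = g(z)\,h|_{Y_{\tau(z)}}$ is visibly a continuous section of $\tau^*\mcal{C}$; then for general $f$ it extends off the closed set $Z*Y\subset Z\times Y$, approximates uniformly by sums of such tensors, and applies Proposition~C.20. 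Your inverse $\bar\iota$ can be handled directly by the same tool: if $g(z_i)\to g(z)$ in the bundle, Proposition~C.20 produces $h\in C_0(Y)$ close to $g(z)$ and eventually to $g(z_i)$ in sup norm on the relevant fibres, and continuity of $h$ on $Y$ then controls $|g(z_i)(y_i)-g(z)(y)|$. Either way, this step needs a concrete argument rather than a remark that the topologies ``correspond''.
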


\begin{remark}
\label{rem:3}
Recall from \cite[Example C.4]{tfb2} that if $\sigma:Y\rightarrow X$
is a continuous surjection then $C_0(Y)$ has a $C_0(X)$-algebra
structure.  Furthermore, the fibre $C_0(Y)(x)$ is isomorphic to
$C_0(\sigma\inv(x))$ via restriction.
\end{remark}

\begin{proof}
Let $\mcal{C}$ be the usc-bundle associated to $C_0(Y)$ as a $C_0(X)$-algebra.  
Define $\iota: C_c(Z*Y)\rightarrow \Gamma_c(Z,\tau^*\mcal{C})$
by $\iota(f)(z)(y) = f(z,y)$.  It is straightforward to show that
$\iota(f)$ is a compactly supported section of $\tau^*\mcal{C}$.  
We need to see that
$\iota(f)$ is continuous.  We start by demonstrating this in a simpler
case. Suppose $g\in C_c(Z)$, $h\in C_c(Y)$ and define $g\otimes
h(z,y) = g(z)h(y)$ for all $(z,y)\in Z*Y$. Suppose $z_i\rightarrow z$.
Since $h\in C_c(Y)$, we can view $h$ as a continuous
section of $\mcal{C}$ with $h(x) = h|_{\sigma\inv(x)}$ for all $x\in
X$ and therefore $h(\tau(z_i))\rightarrow h(\tau(z))$
in $\mcal{C}$, since $\tau$ is continuous.  It follows quickly from the
fact that scalar multiplication is continuous 
that $\iota(f)$ is a continuous section.  

Now suppose we have $f\in C_c(Z*Y)$.  Since $Z*Y$ is closed in $Z\times Y$,
we can extend $f$ to the product space and then 
find $g_i^j\in C_c(Z)$ and $h_i^j\in C_c(Y)$ such that 
$k_i = \sum_j g_i^j\otimes h_i^j
\rightarrow f$ uniformly.  Let $z_i\rightarrow z$ and
observe that $\tau(z_i)\rightarrow \tau(z)$.  We will show
$\iota(f)(z_i)\rightarrow \iota(f)(z)$ using \cite[Proposition
C.20]{tfb2}.  Let $\epsilon > 0$ and choose $I$ such that
$\|k_I -f \|_\infty < \epsilon$.  Since sums of continuous functions
are continuous,
$\iota(k_I)(z_i)\rightarrow \iota(k_I)(z)$ by the previous paragraph.  
Furthermore, given $w\in Z$ we have 
\[
\|\iota(k_I)(w) -\iota(f)(w)\|_\infty  = 
\sup_{y\in \sigma\inv(\tau(w))} |k_I(w,y) - f(w,y)| 
\leq \|k_I - f\|_\infty < \epsilon.
\]
Since this is true for all $z_i$ and $z$, we are done. 

An elementary calculation now shows that $\iota$ is isometric and it
follows from an application of \cite[Proposition C.24]{tfb2} that
$\ran\iota$ is dense in $\Gamma_0(Z,\tau^*\mcal{C})$.  Since
$\iota:C_c(Z*Y)\rightarrow \tau^*C_0(Y)$ is an isometry mapping onto a
dense set we can extend $\iota$ to an isomorphism
$\iota:C_0(Z*Y) \rightarrow \tau^*C_0(Y)$.  It is now straightforward
to see that $\iota$ has the desired form on all of $C_0(Z*Y)$ and that
$\iota$ preserves convergence with respect to the inductive limit
topology. 
\end{proof}

Using Lemma \ref{lem:26} we can show that left translation yields a
groupoid dynamical system from any $G$-space.  

\begin{prop}
\label{prop:68}
Suppose a locally compact Hausdorff groupoid $G$ acts on a
locally compact Hausdorff space $X$.  Then $C_0(X)$ is a
$C_0(G\unit)$-algebra and there is an action of $G$ on $C_0(X)$ given
by $\lt_\gamma:C_0(r_X\inv(s(\gamma))) \rightarrow
C_0(r_X\inv(r(\gamma)))$ where 
\begin{equation}
\lt_\gamma(f)(y) = f(\gamma\inv\cdot y)
\end{equation}
for all $f\in C_0(r_X\inv(s(\gamma)))$ and $y\in r_X\inv(r(\gamma))$.
\end{prop}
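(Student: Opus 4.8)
The plan is to verify the three required pieces in turn: that $C_0(X)$ carries a natural $C_0(G\unit)$-algebra structure via the anchor (moment) map $r_X:X\to G\unit$, that each $\lt_\gamma$ is a well-defined $C^*$-isomorphism of the indicated fibres, and that the family $\{\lt_\gamma\}$ assembles into a \emph{continuous} action in the sense of a groupoid dynamical system. The first point is essentially Remark \ref{rem:3}: since $r_X:X\to G\unit$ is a continuous surjection (surjectivity onto $G\unit$ holds because $r_X(x) = r_X(x)$ lies in the unit space and every unit acts on its own fibre), $C_0(X)$ is a $C_0(G\unit)$-algebra with fibre $C_0(X)(u)$ identified with $C_0(r_X\inv(u))$ by restriction. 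I would record this and let $\mcal{C}$ denote the associated usc-bundle, so that $\mcal{C}_u = C_0(r_X\inv(u))$.

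Next I would check that $\lt_\gamma$ is a well-defined isomorphism $C_0(r_X\inv(s(\gamma)))\to C_0(r_X\inv(r(\gamma)))$. If $f\in C_0(r_X\inv(s(\gamma)))$ and $y\in r_X\inv(r(\gamma))$, then $\gamma\inv\cdot y$ makes sense because $r_X(y) = r(\gamma) = r(\gamma\inv)$... (using $r_X(\gamma\inv\cdot y) = s(\gamma\inv) = r(\gamma)$, wait) --- more carefully, $\gamma\inv$ has range $s(\gamma)$ and source $r(\gamma)$, so $\gamma\inv$ can act on $y$ precisely when $r_X(y) = r(\gamma)$, and then $r_X(\gamma\inv\cdot y) = s(\gamma)$, so $f(\gamma\inv\cdot y)$ is defined and $\lt_\gamma(f)$ is a function on $r_X\inv(r(\gamma))$. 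Continuity and the $C_0$ condition follow because $y\mapsto \gamma\inv\cdot y$ is a homeomorphism of $r_X\inv(r(\gamma))$ onto $r_X\inv(s(\gamma))$ (with inverse $y\mapsto\gamma\cdot y$), being the restriction of the continuous action. Hence $\lt_\gamma$ is a $*$-isomorphism; the identities $\lt_\gamma\circ\lt_\eta = \lt_{\gamma\eta}$ (for composable $\gamma,\eta$) and $\lt_{u} = \id$ for units $u$ are immediate from the groupoid action axioms $\eta\inv\cdot(\gamma\inv\cdot y) = (\gamma\eta)\inv\cdot y$.

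The main obstacle --- and the only place real work is needed --- is verifying that $\gamma\cdot f := \lt_\gamma(f)$ defines a \emph{continuous} action of $G$ on $\mcal{C}$, i.e., that the map $(\gamma,f)\mapsto \lt_\gamma(f)$ from $G*\mcal{C} = \{(\gamma,f):f\in\mcal{C}_{s(\gamma)}\}$ to $\mcal{C}$ is continuous. This is where Lemma \ref{lem:26} does the heavy lifting: applying it with $Z = G$, $Y = X$, $\tau = s$ (the source map) and $\sigma = r_X$ gives $Z*Y = \{(\gamma,y): s(\gamma) = r_X(y)\} = G\ltimes X$ (the action groupoid total space), and an isomorphism $\iota:C_0(G\ltimes X)\to s^*(C_0(X)) = s^*\mcal{C}$ with $\iota(F)(\gamma)(y) = F(\gamma,y)$, which moreover respects inductive-limit convergence and preserves compact supports. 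I would then observe that the composite $(\gamma,y)\mapsto (\gamma,\gamma\inv\cdot y)$ is a homeomorphism of $G\ltimes X$ (onto the fibred product $\{(\gamma,y'): r(\gamma) = r_X(y')\}$, or one can arrange indices to land back in $G\ltimes X$); pulling a section of $r^*\mcal{C}$ back along it and then applying $\iota$-type identifications shows that $\gamma\mapsto\lt_\gamma(f(s(\gamma)))$ is a continuous section of $r^*\mcal{C}$ whenever $f$ is a continuous section of $\mcal{C}$. Concretely: given a continuous section $f$ of $\mcal{C}$ over $G\unit$ and a continuous function realizing it near a point, the function $(\gamma,y)\mapsto f(s(\gamma))(\gamma\inv\cdot y)$ lies in (a local version of) $C_0(G\ltimes X)$ by continuity of the action and of $s$, hence by Lemma \ref{lem:26} corresponds to a continuous section $\gamma\mapsto \lt_\gamma(f(s(\gamma)))$ of $r^*\mcal{C}$; this is exactly the continuity condition in the definition of a groupoid dynamical system. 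Since all other axioms were checked above, $(C_0(X),G,\lt)$ is a groupoid dynamical system, completing the proof.
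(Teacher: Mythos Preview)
Your approach is essentially the paper's: both dispose of the algebraic checks quickly and reduce the only real issue---continuity of the action on the bundle $\mcal{C}$---to Lemma~\ref{lem:26}, by recognising that twisting by the groupoid action carries continuous functions on one $G$--$X$ fibred product to continuous functions on the other and hence (via $\iota$) continuous sections to continuous sections. Two small points are worth tidying. First, there is an index slip: on $G\ltimes X=\{(\gamma,y):s(\gamma)=r_X(y)\}$ the expression $\gamma\inv\cdot y$ is undefined; the homeomorphism you want is $(\gamma,y)\mapsto(\gamma,\gamma\cdot y)$ onto $\{(\gamma,y'):r(\gamma)=r_X(y')\}$, and the function $(\gamma,y)\mapsto f(s(\gamma))(\gamma\inv\cdot y)=\lt_\gamma(f(s(\gamma)))(y)$ lives on the \emph{latter} space, where Lemma~\ref{lem:26} with $\tau=r$ identifies it with a section of $r^*\mcal{C}$. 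Second, the paper phrases continuity at the level of nets in $\mcal{C}$ and finishes with a norm estimate plus \cite[Proposition~C.20]{tfb2}, whereas you verify the equivalent section-level criterion that $\gamma\mapsto\lt_\gamma(f(s(\gamma)))$ is a continuous section of $r^*\mcal{C}$ for each $f\in\Gamma_0(G\unit,\mcal{C})$; this is the formulation in \cite[Lemma~4.3]{renaultequiv}, so the two arguments differ only in which side of that equivalence is checked.
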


\begin{proof}
The $C_0(G\unit)$-algebra structure on $C_0(X)$ arises from the range
map on $X$ as in Remark \ref{rem:3}. Let $\mcal{C}$ be the usc-bundle
associated to $C_0(X)$.  Define
$\lt_\gamma:C_0(r_X(s(\gamma)))\rightarrow C_0(r_X(r(\gamma)))$ as in
the statement of the proposition.  It is straightforward to show that
each $\lt_\gamma$ is an isomorphism and that the groupoid operations
are preserved.  The only difficult part is proving that the action is
continuous.  Suppose $f_i\rightarrow f$ in $\mcal{C}$ and
$\gamma_i\rightarrow \gamma$ in $G$ such that $u_i = p(f_i) = s(\gamma_i)$
for all $i$ and $u=p(f) = s(\gamma)$.  
Observe from Lemma \ref{lem:26} that there is an
isomorphism $\iota:C_0(G*X) \rightarrow r^*C_0(X)$ and find $g\in
C_0(G*X)$ such that $\iota(g)(u) = f$.  Define $\tilde{g}\in C_0(G*X)$
by $\tilde{g}(\gamma,x) = g(\gamma,\gamma\inv\cdot x)$.  A simple
calculation shows that $\iota(\tilde{g})(\eta) =
\lt_\eta(\iota(g)(\eta))$ for all $\eta\in G$.  In particular, since
$\iota(\tilde{g})$ is a continuous section, we must have
$\lt_{\gamma_i}(\iota(g)(\gamma_i)) \rightarrow
\lt_\gamma(\iota(g)(\gamma))$.  Finally, observe that 
\[
\|\lt_{\gamma_i}(\iota(g)(\gamma_i)) - \lt_{\gamma_i}(f_i)\| =
\|\iota(g)(\gamma_i)-f_i\| \rightarrow 0
\]
so that by \cite[Proposition C.20]{tfb2} we have
$\lt_{\gamma_i}(f_i)\rightarrow \lt_\gamma(f)$. 
\end{proof}

\begin{remark}
It is not particularly difficult to use Lemma \ref{lem:26} to 
show that if $X$ is a $G$-space
and $G\ltimes X$ is the transformation groupoid then
$C_0(X)\rtimes_{\lt} G$ is naturally isomorphic to the transformation
groupoid $C^*$-algebra $C^*(G\ltimes X)$.  However, since this fact has
little bearing on our current discussion, the proof has been omitted.  
\end{remark}

\begin{remark}
There is a converse to Proposition \ref{prop:68} for abelian
algebras.  Given a groupoid dynamical system $(C_0(X),G,\alpha)$ 
it follows from \cite[Proposition 1.1]{specpaper} that there is an
action of $G$ on $C_0(X)\sidehat = X$. It is straightforward to show,
once one sorts out all the definitions, that $\alpha$ is given by left
translation with respect to this action.  
\end{remark}

\begin{example}
If $X$ is a principal $S$-space then we showed in Proposition
\ref{prop:27} that there is a continuous action of $S$ on $X$.  Hence
there is an associated crossed product $C_0(X)\rtimes_{\lt} S$ which
will be essential in the latter half of Section \ref{sec:locally-unitary}.  
\end{example}

\section{Unitary Actions}

\label{sec:unitary}
In this section we will discuss what it means for a groupoid to act
trivially.  The main goal will be to show that if the action is
trivial then the crossed product reduces to a balanced tensor product.  As with
group crossed products, trivial actions are going to be defined by
unitaries.  

\begin{definition}
\label{def:51}
Suppose $S$ is a locally compact Hausdorff 
groupoid group bundle and $A$ is a
$C_0(S\unit)$-algebra.  Then a {\em unitary action} of $S$ on $A$ is
defined to be a collection $\{u_s\}_{s\in S}$ such that 
\begin{enumerate}
\item $u_s \in UM(A(p(s)))$ for all $s\in S$,
\item $u_{st} = u_su_t$ whenever $p(s)=p(t)$, and 
\item $s\cdot a := u_s a$ defines a continuous action of $S$ on $\mcal{A}$.
\end{enumerate}
The triple $(A,S,u)$ is called a unitary dynamical system.  
\end{definition}

\begin{remark}
If $u$ is a unitary action
of $S$ on $A$ then the restriction of $u$ to $S_v$ for $v\in S\unit$
gives a unitary action of $S_v$ on $A(v)$ in the sense of
\cite[Definition 2.70]{tfb2}.  Thus, Definition \ref{def:51} is really
just a ``bundled'' version of the notion of a unitary action of
a group on a $C^*$-algebra.  
\end{remark}

As with groupoid dynamical systems there is an ``unbundled''
definition.

\begin{prop}
\label{prop:76}
Suppose $(A,S,u)$ is a unitary dynamical system.  Then there is an
element $u\in UM(p^*A)$ such that $u(s) = u_s$ for all $s\in S$.
Conversely, if we have $u\in UM(p^*A)$ then there are elements $u_s\in
UM(A(p(s)))$ for all $s\in S$ and if $u_{st} = u_s u_t$ whenever
$p(s)=p(t)$ then $\{u_s\}$ defines a unitary action of $S$ on $A$.
\end{prop}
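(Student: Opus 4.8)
The plan is to reduce both halves of the statement to the standard fibrewise description of multipliers of a section algebra. Write $p^*A=\Gamma_0(S,p^*\mcal{A})$; its fibre over $s\in S$ is $A(p(s))$, and its total space is the fibred product $S*\mcal{A}:=\{(s,a)\in S\times\mcal{A}: p(s)=\pi(a)\}$, where $\pi$ is the bundle map of $\mcal{A}$, a continuous section of $p^*\mcal{A}$ being exactly a continuous map $s\mapsto(s,f(s))$ into $S*\mcal{A}$. Since evaluation $\ev_s:p^*A\to A(p(s))$ is a surjective homomorphism, it extends to a unital homomorphism $M(p^*A)\to M(A(p(s)))$, so any $u\in UM(p^*A)$ yields a bounded family $u_s:=\ev_s(u)\in UM(A(p(s)))$ with $(uf)(s)=u_sf(s)$ and $(fu)(s)=f(s)u_s$ for all $f\in p^*A$. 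Conversely, the $C_0(X)$-algebra machinery of \cite[Appendix C]{tfb2} (see also \cite[Section 3.1]{mythesis}) shows that a bounded family $\{u_s\}$, $u_s\in UM(A(p(s)))$, arises from an element of $UM(p^*A)$ precisely when $s\mapsto u_sf(s)$ and $s\mapsto f(s)u_s$ are continuous for every $f\in p^*A$ (vanishing at infinity is automatic from $\|u_sf(s)\|\le\|f(s)\|$). Thus in both directions the whole content is to pass between continuity of the action $s\cdot a=u_sa$ on $\mcal{A}$ and continuity of left and right multiplication by $\{u_s\}$ on sections; the rest is bookkeeping with the cocycle identity, and the continuity of the action in the converse is the step I expect to be the main obstacle.

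For the forward direction, let $(A,S,u)$ be a unitary dynamical system. The cocycle identity of Definition~\ref{def:51}(b) forces $u_{p(s)}$ to be a unitary idempotent, hence $u_{p(s)}=1$, whence $u_{s\inv}=u_s\inv=u_s^*$. Fix $f\in p^*A$ and view it as the continuous map $s\mapsto(s,f(s))$ into $S*\mcal{A}$; composing with the action map $(s,a)\mapsto u_sa$, which is continuous by Definition~\ref{def:51}(c), shows $s\mapsto u_sf(s)$ is a continuous section, so it lies in $p^*A$. For right multiplication write $f(s)u_s=(u_s^*f(s)^*)^*=(u_{s\inv}f(s)^*)^*$: inversion on $S$ and the involution on $\mcal{A}$ are continuous, so $s\mapsto(s\inv,f(s)^*)$ is continuous into $S*\mcal{A}$, hence $s\mapsto u_{s\inv}f(s)^*$ is continuous, and a final application of the involution gives continuity of $s\mapsto f(s)u_s$. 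By the fact recorded above, $\{u_s\}$ defines $u\in M(p^*A)$; since fibrewise $u_s^*u_s=u_su_s^*=1$ we get $u\in UM(p^*A)$, and plainly $u(s)=u_s$.

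For the converse, let $u\in UM(p^*A)$ and put $u_s:=\ev_s(u)\in UM(A(p(s)))$; this is Definition~\ref{def:51}(a), and Definition~\ref{def:51}(b) is now the standing hypothesis $u_{st}=u_su_t$. As before this forces $u_{p(s)}=1$, so $s\cdot a:=u_sa$ satisfies $p(s)\cdot a=a$ and $(st)\cdot a=s\cdot(t\cdot a)$ whenever $p(s)=p(t)=\pi(a)$; that is, it is an action of $S$ on $\mcal{A}$ in the algebraic sense. The remaining point, Definition~\ref{def:51}(c), is that $(s,a)\mapsto u_sa$ is continuous on $S*\mcal{A}$, and this is the crux. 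Suppose $(s_i,a_i)\to(s,a)$ in $S*\mcal{A}$, so $s_i\to s$ in $S$ and $a_i\to a$ in $\mcal{A}$ with $\pi(a_i)=p(s_i)\to\pi(a)=p(s)$. Choose a continuous section $F$ of $\mcal{A}$ with $F(\pi(a))=a$ and, using local compactness of $S$, a function $\phi\in C_c(S)$ equal to $1$ on a neighborhood of $s$; then $f\in p^*A$ defined by $f(t)=\phi(t)F(p(t))$ has $f(s)=a$ and $f(s_i)=F(p(s_i))$ eventually. By \cite[Proposition C.20]{tfb2}, $a_i\to a=F(\pi(a))$ gives $\|a_i-F(p(s_i))\|\to0$, so $\|u_{s_i}a_i-u_{s_i}f(s_i)\|\le\|a_i-f(s_i)\|\to0$; and since $uf\in p^*A$ we have $u_{s_i}f(s_i)=(uf)(s_i)\to(uf)(s)=u_sa$. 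A second application of \cite[Proposition C.20]{tfb2} yields $u_{s_i}a_i\to u_sa$, so the action is continuous and $\{u_s\}$ is a unitary action of $S$ on $A$.
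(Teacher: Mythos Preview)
Your proof is correct and follows essentially the same route as the paper's. Both arguments (i) use the cocycle identity to get $u_{s\inv}=u_s^*$, (ii) verify that left and right (equivalently, left and adjoint-left) multiplication by the family $\{u_s\}$ preserve continuous sections so that the fibrewise multiplier criterion applies, and (iii) handle the converse continuity via a section through $a$ together with \cite[Proposition~C.20]{tfb2}; the only cosmetic difference is that the paper invokes \cite[Lemma~2]{yinglee} for the fibrewise multiplier criterion where you invoke the $C_0(X)$-algebra machinery of \cite[Appendix~C]{tfb2}, and the paper checks continuity of $s\mapsto u_s^*f(s)$ directly rather than of $s\mapsto f(s)u_s$.
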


\begin{proof}
Suppose $(A,S,u)$ is a unitary action and $f\in p^*A$.  We need
to show that 
\[ h(s) := u_s f(s),\quad\text{and}\quad g(s) := u_s^*f(s) \]
define elements of $p^*A$.  The continuity of $h$ is obvious from
condition (c) of Definition~\ref{def:51}.  Suppose $s_i\rightarrow s$
and $a_i\rightarrow a$ such that $a_i\in A(p(s_i))$ for all $i$.  
First, observe that condition (b) of
Definition~\ref{def:51} guarantees that $u_{s\inv} = u_s\inv = u_s^*$
for all $s\in S$.  Therefore
\[
u_{s_i\inv}a_i = u_{s_i}^*a_i \rightarrow u_s^*a = u_{s\inv}a.
\]
It follows immediately that $g$ is continuous as well.  Furthermore, 
both $h$ and $g$ must vanish at infinity because $f$ does.
Thus $h,g\in p^*A$.  Hence \cite[Lemma 2]{yinglee} implies that there is
a multiplier $u$ such that $u(f)(s) = u_sf(s)$ for all $s\in S$.
Since each $u_s$ is a unitary, it is clear that $u$ must be a
unitary.  

Next, suppose we are given $u\in UM(p^*A)$.  Then, via 
\cite[Lemma 2]{yinglee}, we know there exists multipliers $u_s$ such that
$u_s(f(s)) = u(f)(s)$.  However, since $u$ is a unitary, each $u_s$
must be as well.  It is now straightforward, using \cite[Proposition
C.20]{tfb2}, to show that $\{u_s\}$ defines a unitary action of $S$ on $A$.
\end{proof}

Given a unitary dynamical system we can form an associated
groupoid dynamical system in the obvious way by using the adjoint
map. 

\begin{prop}
Suppose $(A,S,u)$ is a unitary dynamical system.  
Then the collection $\{\Ad u_s\}_{s\in S}$ defines a 
groupoid action of $S$ on $A$. We say that such an action is {\em
  unitary} or {\em  unitarily implemented}.
\end{prop}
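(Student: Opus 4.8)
The plan is to verify the three axioms in the definition of a groupoid dynamical system for the collection $\alpha_s := \Ad u_s$: that each $\alpha_s$ is a ($C^*$-algebra) isomorphism $A(p(s))\to A(p(s))$, that $\alpha_{st} = \alpha_s\circ\alpha_t$ whenever $s$ and $t$ are composable, and that $(s,a)\mapsto \alpha_s(a)$ is a continuous action of $S$ on $\mcal{A}$. Since $S$ is a group bundle the range and source of $s$ both equal $p(s)$, and ``$s$ composable with $t$'' just means $p(s)=p(t)$, so these three requirements are indexed in exactly the same way as conditions (a)--(c) of Definition \ref{def:51}.

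For the algebraic part: because $u_s\in UM(A(p(s)))$, the map $\Ad u_s : a\mapsto u_s a u_s^*$ carries $A(p(s))$ into itself and is a $*$-homomorphism; and, as observed in the proof of Proposition \ref{prop:76}, condition (b) of Definition \ref{def:51} forces $u_{s\inv} = u_s^* = u_s\inv$, so $\Ad u_{s\inv}$ is a two-sided inverse for $\Ad u_s$ and hence $\alpha_s$ is an automorphism of $A(p(s))$. For $p(s)=p(t)$, condition (b) gives $u_{st}=u_s u_t$, whence $\alpha_{st} = \Ad(u_s u_t) = \Ad u_s\circ\Ad u_t = \alpha_s\circ\alpha_t$; and clearly $\alpha_s(a)\in A(p(s))$ whenever $a\in A(p(s))$, so $\alpha$ is algebraically an action of $S$ on $A$.

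The only substantive point is continuity of $(s,a)\mapsto u_s a u_s^*$ as a map on the relevant fibred product into $\mcal{A}$, and I would derive it from condition (c) of Definition \ref{def:51} together with continuity of the involution on $\mcal{A}$. Given nets $s_i\to s$ in $S$ and $a_i\to a$ in $\mcal{A}$ with $a_i\in A(p(s_i))$, condition (c) gives $b_i := u_{s_i}a_i \to u_s a =: b$ with $b_i\in A(p(s_i))$; then $b_i^*\to b^*$, so condition (c) applies once more to yield $u_{s_i}b_i^*\to u_s b^*$, and taking adjoints a final time gives $u_{s_i}a_i u_{s_i}^* = b_i u_{s_i}^* = (u_{s_i}b_i^*)^* \to (u_s b^*)^* = u_s a u_s^*$. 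Hence $\alpha$ is continuous and $(A,S,\Ad u)$ is a groupoid dynamical system. I do not expect a genuine obstacle here: the algebra is immediate and the continuity collapses to two applications of the hypothesis; the only thing to be careful about is that all convergences must be taken in $\mcal{A}$ respecting the fibre structure, which is precisely the setting in which condition (c) was stated.
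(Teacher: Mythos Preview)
Your proof is correct. The algebraic verifications are immediate, and your continuity argument---two applications of condition~(c) of Definition~\ref{def:51} interleaved with the continuity of the involution on the usc-bundle $\mcal{A}$---is clean and complete; the involution is indeed continuous on $\mcal{A}$ because for any continuous section $f$ the pointwise adjoint $f^*$ is again a continuous section.

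Your route differs from the paper's. The paper takes the ``unbundled'' viewpoint: it invokes Proposition~\ref{prop:76} to assemble the fibrewise unitaries into a single element $u\in UM(p^*A)$, observes that $\Ad u(f)=ufu^*$ is a $C_0(S\unit)$-linear automorphism of the pull-back algebra $p^*A$, and then appeals to \cite[Lemma~4.3]{renaultequiv} to translate such an automorphism back into a continuous action on the bundle. Your argument stays entirely on the bundle side and verifies the axioms directly, avoiding both the passage through $UM(p^*A)$ and the external citation. The paper's approach is more conceptual and fits the pattern established in Proposition~\ref{prop:76} and~\ref{prop:77}; yours is more elementary and self-contained, and makes transparent exactly which feature of condition~(c) drives the continuity of $\Ad u$.
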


\begin{proof}
Given a unitary action let $u$ be the corresponding element of
$UM(p^*A)$ guaranteed by Proposition \ref{prop:76}.  Then define
$\Ad u:p^*A\rightarrow p^*A$ by $\Ad u(f) = ufu^*$.  Clearly $\Ad u$ is a
$C_0(S\unit)$-linear automorphism of $p^*(A)$ and it is
straightforward to use \cite[Lemma 4.3]{renaultequiv} to show that
$\Ad u$ yields the desired dynamical system. 
\end{proof}

At this point we need to make a brief detour through the notion of
equivalent actions.  The following construction will
play the role of isomorphism. 

\begin{definition}
\label{def:53}
Suppose $G$ is a locally compact Hausdorff 
groupoid and $A$ is a $C_0(G\unit)$-algebra.  Furthermore,
suppose $\alpha$ and $\beta$ are actions of $G$ on $A$.  Then we say
that $\alpha$ and $\beta$ are {\em exterior equivalent} if there is a
collection $\{u_\gamma\}_{\gamma\in G}$ such that 
\begin{enumerate}
\item $u_\gamma \in UM(A(r(\gamma)))$ for all $\gamma\in G$, 
\item $u_{\gamma\eta} = u_\gamma\overline{\alpha}_\gamma(u_\eta)$ for
  all $\gamma,\eta\in G$ such that $s(\gamma)=r(\eta)$,
\item the map $(\gamma,a)\mapsto u_\gamma a$ is jointly continuous on
  the set $\{(\gamma,a)\in G\times\mcal{A}:{r(\gamma) = q(a)}\}$, and
\item $\beta_\gamma = \Ad u_\gamma \circ \alpha_\gamma$ for all
  $\gamma\in G$. 
\end{enumerate}
\end{definition}

As before, we present an alternate definition which removes the bundle theory. 

\begin{prop}
\label{prop:77}
Suppose $\alpha$ and $\beta$ are exterior equivalent actions of the
locally compact groupoid $G$ on the $C_0(G\unit)$-algebra $A$ with
the collection $\{u_\gamma\}$ implementing the equivalence.  Then
there is an element $u\in UM(r^*A)$ such that $u(f)(\gamma) = u_\gamma
f(\gamma)$ for all $f\in r^*A$ and $\gamma\in G$.  

Conversely, if $u\in UM(r^*A)$ then there are $u_\gamma\in
UM(A(r(\gamma)))$ for all $\gamma\in G$.  If $u_{\gamma\eta} =
u_\gamma \overline{\alpha}_\gamma(u_\eta)$ whenever $s(\gamma) =
r(\eta)$ and $\beta_\gamma = \Ad u_\gamma\circ\alpha_\gamma$ for all
$\gamma\in G$ then $\alpha$ and $\beta$ are exterior equivalent. 
\end{prop}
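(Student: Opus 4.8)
The plan is to mimic the proof of Proposition \ref{prop:76}, using \cite[Lemma 2]{yinglee} as the main tool to pass back and forth between a family of multipliers indexed by $G$ and a single multiplier of $r^*A$. For the forward direction, I would start with an exterior equivalence implemented by $\{u_\gamma\}$ and, given $f\in r^*A$, check that $h(\gamma):=u_\gamma f(\gamma)$ and $g(\gamma):=u_\gamma^* f(\gamma)$ again lie in $r^*A$. Continuity of $h$ is immediate from condition (c) of Definition \ref{def:53}. For $g$ the issue is continuity of $\gamma\mapsto u_\gamma^*$ against a convergent net in $\mcal{A}$; unlike the group-bundle case of Proposition \ref{prop:76}, here $u_{\gamma\inv}$ is \emph{not} simply $u_\gamma^*$ — the cocycle identity (b) gives $u_{\gamma\inv} = \overline{\alpha}_{\gamma}\inv(u_\gamma^*) = \overline{\alpha}_{\gamma\inv}(u_\gamma^*)$. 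So I would argue continuity of $g$ by writing $u_\gamma^* f(\gamma) = \overline{\alpha}_\gamma\big(u_{\gamma\inv} \alpha_{\gamma}\inv(f(\gamma))\big)$ and invoking joint continuity of $\alpha$ (via \cite[Lemma 4.3]{renaultequiv}) together with condition (c) applied to the net $\gamma_i\inv$. Both $h$ and $g$ vanish at infinity because $f$ does. Then \cite[Lemma 2]{yinglee} produces a multiplier $u$ of $r^*A$ with $u(f)(\gamma) = u_\gamma f(\gamma)$, and unitarity of $u$ follows fibrewise since each $u_\gamma$ is unitary.

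For the converse, given $u\in UM(r^*A)$, \cite[Lemma 2]{yinglee} yields multipliers $u_\gamma\in M(A(r(\gamma)))$ with $u_\gamma(f(\gamma)) = u(f)(\gamma)$, and these are unitary because $u$ is. Condition (a) of Definition \ref{def:53} is then immediate, condition (b) is assumed, and condition (d) is assumed. The only thing to verify is the joint-continuity condition (c): given $\gamma_i\to\gamma$ and $a_i\to a$ with $a_i\in\mcal{A}(r(\gamma_i))$, one must show $u_{\gamma_i}a_i\to u_\gamma a$. This is where \cite[Proposition C.20]{tfb2} comes in — choose $f\in r^*A$ with $f(\gamma)=a$, so that $u(f)$ is a continuous section of $r^*A$ and hence $u_{\gamma_i}f(\gamma_i)\to u_\gamma f(\gamma)=u_\gamma a$; then estimate $\|u_{\gamma_i}a_i - u_{\gamma_i}f(\gamma_i)\| = \|a_i - f(\gamma_i)\|\to 0$ and apply \cite[Proposition C.20]{tfb2}. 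That gives condition (c) and completes the exterior equivalence.

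The main obstacle I anticipate is the continuity of $g(\gamma)=u_\gamma^* f(\gamma)$ in the forward direction: because $r^*A$ is built from the \emph{range} map, the natural "inverse" symmetry available for group bundles in Proposition \ref{prop:76} is not present, and one genuinely has to feed the $\alpha$-twist through the cocycle identity and rely on continuity of the underlying action $\alpha$. Everything else is a routine transcription of the group-action argument (\cite[Definition 2.70]{tfb2} and its exterior-equivalence analogue) into the $C_0(G\unit)$-algebra / upper-semicontinuous-bundle language, using \cite[Lemma 2]{yinglee} and \cite[Proposition C.20]{tfb2} exactly as in the proof of Proposition \ref{prop:76}.
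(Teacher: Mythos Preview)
Your proposal is correct and matches the paper's approach exactly: the paper's proof of Proposition~\ref{prop:77} consists of the single sentence ``This is demonstrated in almost exactly the same way as Proposition~\ref{prop:76} and the proof is omitted for brevity.'' You have in fact gone further than the paper by isolating the one genuine wrinkle --- that $u_{\gamma\inv}=\overline{\alpha}_{\gamma\inv}(u_\gamma^*)$ rather than $u_\gamma^*$, so continuity of $\gamma\mapsto u_\gamma^* f(\gamma)$ requires feeding the cocycle identity through the continuity of $\alpha$ --- and your treatment of it via $u_\gamma^* f(\gamma)=\alpha_\gamma\big(u_{\gamma\inv}\,\alpha_{\gamma\inv}(f(\gamma))\big)$ together with condition~(c) applied to $\gamma_i\inv$ is the right way to handle it.
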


\begin{proof}
This is demonstrated in almost exactly the same way as Proposition
\ref{prop:76} and the proof is omitted for brevity. 
\end{proof}

The most important fact about exterior equivalent
actions is the following 

\begin{prop}
\label{prop:84}
Suppose $(A,G,\alpha)$ and $(A,G,\beta)$ are exterior equivalent
groupoid dynamical systems with the equivalence implemented
by $\{u_\gamma\}$.  Then the map
$\phi:\Gamma_c(G,r^*\mcal{A})\rightarrow \Gamma_c(G,r^*\mcal{A})$
defined by 
\begin{equation}
\label{eq:72}
\phi(f)(\gamma) = f(\gamma)u_\gamma^*
\end{equation}
for all $\gamma\in G$ extends to an isomorphism from $A\rtimes_\alpha
G$ onto $A\rtimes_\beta G$.  
\end{prop}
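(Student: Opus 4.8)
The plan is to verify directly that the map $\phi$ defined by $\phi(f)(\gamma) = f(\gamma)u_\gamma^*$ on $\Gamma_c(G,r^*\mcal{A})$ is a $*$-algebra homomorphism into $\Gamma_c(G,r^*\mcal{A})$ with the $\beta$-twisted operations, is bounded (indeed norm-decreasing), has a norm-decreasing inverse, and hence extends to an isomorphism of the completions. First I would check that $\phi$ actually lands in $\Gamma_c(G,r^*\mcal{A})$: since $u\in UM(r^*A)$ by Proposition \ref{prop:77}, right multiplication by $u^*$ sends continuous compactly supported sections to continuous compactly supported sections, and clearly $\phi$ is linear with inverse $\psi(f)(\gamma) = f(\gamma)u_\gamma$, so $\phi$ is a linear bijection of $\Gamma_c(G,r^*\mcal{A})$ onto itself.

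Next I would carry out the two algebraic computations, writing $*_\alpha$, ${}^{*_\alpha}$ for the operations in $\Gamma_c(G,r^*\mcal{A})$ built from $\alpha$ and $*_\beta$, ${}^{*_\beta}$ for those built from $\beta$. For the product, one computes
\begin{align*}
\phi(f)*_\beta\phi(g)(\gamma)
&= \int_G \phi(f)(\eta)\,\beta_\eta\bigl(\phi(g)(\eta\inv\gamma)\bigr)\,d\lambda^{r(\gamma)}(\eta)\\
&= \int_G f(\eta)u_\eta^*\,\Ad u_\eta\Bigl(\alpha_\eta\bigl(g(\eta\inv\gamma)u_{\eta\inv\gamma}^*\bigr)\Bigr)\,d\lambda^{r(\gamma)}(\eta),
\end{align*}
and the $u_\eta^*$ cancels the left $u_\eta$ from $\Ad u_\eta$, leaving $f(\eta)\alpha_\eta(g(\eta\inv\gamma))\alpha_\eta(u_{\eta\inv\gamma}^*)u_\eta^*$; the cocycle identity (b) of Definition \ref{def:53}, namely $u_\gamma = u_{\eta(\eta\inv\gamma)} = u_\eta\overline{\alpha}_\eta(u_{\eta\inv\gamma})$, gives $\alpha_\eta(u_{\eta\inv\gamma}^*)u_\eta^* = (u_\eta\overline{\alpha}_\eta(u_{\eta\inv\gamma}))^* = u_\gamma^*$, so the integrand is $f(\eta)\alpha_\eta(g(\eta\inv\gamma))u_\gamma^* = (f*_\alpha g)(\eta\,\text{integrand})\,u_\gamma^*$, whence $\phi(f)*_\beta\phi(g) = \phi(f*_\alpha g)$. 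A similar manipulation using $u_{\gamma\inv} = \overline{\alpha}_{\gamma\inv}(u_\gamma^*)$ (from the cocycle identity with $\eta = \gamma\inv$, together with $u_{r(\gamma)} = 1$) shows $\phi(f)^{*_\beta} = \phi(f^{*_\alpha})$. I would make sure the multiplier and extended-automorphism arithmetic here is justified by \cite[Lemma 4.3]{renaultequiv} and the standard facts about $\overline{\alpha}_\gamma$ acting on $UM(A(s(\gamma)))$.

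For boundedness, the cleanest route is representation-theoretic: given a covariant representation $(\pi,U,X*\mfrk{H},\mu)$ of $(A,G,\beta)$, define $V_\gamma = \pi_{r(\gamma)}(u_\gamma^*)U_\gamma$ (interpreting $\pi_{r(\gamma)}$ on multipliers in the usual way); one checks that $(\pi,V,X*\mfrk{H},\mu)$ is a covariant representation of $(A,G,\alpha)$ — the covariance relation $V_\gamma\pi_{s(\gamma)}(a) = \pi_{r(\gamma)}(\alpha_\gamma(a))V_\gamma$ follows from the $\beta$-covariance of $U$ and $\beta_\gamma = \Ad u_\gamma\circ\alpha_\gamma$, while the cocycle identity (b) gives the representation property of $V$ up to the modular adjustments already present in \eqref{eq:36} — and then a direct substitution into \eqref{eq:36} shows $(\pi\rtimes V)(f) = (\pi\rtimes U)(\phi(f))$. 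Hence $\|\phi(f)\|_{A\rtimes_\beta G}\ge \|(\pi\rtimes U)(\phi(f))\| = \|(\pi\rtimes V)(f)\|$ for every $\beta$-covariant $(\pi,U)$; taking the supremum over all such and noting the correspondence $(\pi,U)\leftrightarrow(\pi,V)$ is a bijection between $\beta$-covariant and $\alpha$-covariant representations (its inverse is $U_\gamma = \pi_{r(\gamma)}(u_\gamma)V_\gamma$) gives $\|\phi(f)\|_{A\rtimes_\beta G} = \|f\|_{A\rtimes_\alpha G}$. Thus $\phi$ is isometric for the universal norms, so it extends to an isometric $*$-homomorphism $A\rtimes_\alpha G\to A\rtimes_\beta G$; the same argument applied to $\psi = \phi\inv$ (which is exterior equivalence implemented by $\{u_\gamma^*\}$, with $\alpha$ and $\beta$ interchanged) shows the extension is surjective, hence an isomorphism.

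I expect the main obstacle to be bookkeeping rather than conceptual: keeping the extended automorphisms $\overline{\alpha}_\gamma$, the multiplier actions $\pi_{r(\gamma)}$ on $UM(A(r(\gamma)))$, and the modular function $\Delta$ straight so that the covariance and cocycle identities are applied in exactly the right form. In particular the verification that $(\pi,V)$ is genuinely covariant $\nu$-a.e., and that $V$ satisfies the (projective-looking but actually honest, since $\Delta$ is the same for both systems) representation identity, needs the joint-continuity hypothesis (c) of Definition \ref{def:53} to ensure $\gamma\mapsto \pi_{r(\gamma)}(u_\gamma)$ is well-behaved; I would invoke \cite[Lemma 4.3]{renaultequiv} and the $C_0(G\unit)$-linear direct-integral decomposition recalled in Section \ref{sec:group-cross-prod} to handle this cleanly.
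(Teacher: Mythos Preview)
Your argument is correct, but the boundedness step takes a different route from the paper's. The paper observes that $\|\phi(f)(\gamma)\| = \|f(\gamma)\|$ for all $\gamma$, so $\phi$ is continuous for the inductive limit topology on $\Gamma_c(G,r^*\mcal{A})$; it then invokes the Disintegration Theorem \cite[Theorems 7.8, 7.12]{renaultequiv} to conclude directly that $\phi$ extends to a bounded $*$-homomorphism $A\rtimes_\alpha G\to A\rtimes_\beta G$, and does the same for the inverse $\psi$. Your approach instead constructs an explicit bijection $(\pi,U)\leftrightarrow(\pi,V)$ between $\beta$- and $\alpha$-covariant representations with $(\pi\rtimes V)(f)=(\pi\rtimes U)(\phi(f))$, yielding the isometry $\|\phi(f)\|_{A\rtimes_\beta G}=\|f\|_{A\rtimes_\alpha G}$ directly. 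The paper's route is shorter and sidesteps exactly the bookkeeping you flag in your last paragraph (verifying that $(\pi,V)$ is a genuine Borel groupoid representation, measurability of $\gamma\mapsto\overline{\pi}_{r(\gamma)}(u_\gamma)$, etc.), at the cost of invoking a heavy black box. Your route is more self-contained and gives the isometry in one stroke rather than boundedness of $\phi$ and $\psi$ separately; it is the standard argument for group crossed products and works here too, provided you do carry out the measurability and covariance checks you identify as the main obstacle.
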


\begin{proof}
Let $(A,G,\alpha)$ and $(A,G,\beta)$ be exterior equivalent dynamical
systems with the equivalence implemented by $\{u_\gamma\}$.  Use
Proposition \ref{prop:77} to find $u\in UM(r^*A)$ such that
$uf(\gamma) = u_\gamma f(\gamma)$ for all $\gamma$.  Given $f\in
\Gamma_c(G,r^*\mcal{A})$ view $f$ as an element of the pull back 
$r^*A$ and define $\phi(f) = fu^*$.  
It is clear that 
$\phi:\Gamma_c(G,r^*\mcal{A})\rightarrow \Gamma_c(G,r^*\mcal{A})$
is given by \eqref{eq:72}.
Some lengthy, but unenlightening, computations show that $\phi$ is a
$*$-homomorphism with respect to the actions arising from $\alpha$ in
its domain and from $\beta$ on its range.  Since 
$\|\phi(f)(\gamma)\| = \|f(\gamma)\|$ for all $\gamma\in G$, 
it follows quickly that $\phi$ is continuous with respect to the
inductive limit topology.  Therefore the Disintegration Theorem 
\cite[Theorem 7.8,7.12]{renaultequiv} implies that 
$\phi$ extends to a $*$-homomorphism from $A\rtimes_\alpha G$
into $A\rtimes_\beta G$.  We can define an inverse $\psi$ for $\phi$ on
$\Gamma_c(G,r^*\mcal{A})$ by $\psi(f)(\gamma) = f(\gamma)u_\gamma$ in
an identical fashion.  
\end{proof}

Moving on, our statement that unitary actions are
``trivial'' dynamical systems will be supported by the next lemma.  However,
let us first introduce an action which is as simple as possible.  

\begin{example}
\label{ex:25}
Suppose $S$ is a locally compact group bundle and $A$ is a
$C_0(S\unit)$-algebra.  It is easy to show that the 
collection of identity maps
$\id_s:A(p(s))\rightarrow A(p(s))$ defines an action of $S$ on $A$. 
Observe that group bundles are the only groupoids which can act
trivially in this manner.  
\end{example}

\begin{lemma}
\label{lem:24}
If $(A,S,\alpha)$ is a unitary dynamical system then it is exterior
equivalent to the trivial system $(A,S,\id)$.  
\end{lemma}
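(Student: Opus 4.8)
The plan is to show that the very family $\{u_s\}_{s\in S}$ defining the unitary action already implements an exterior equivalence between the trivial system $(A,S,\id)$ and $(A,S,\alpha)$, where $\alpha_s = \Ad u_s$. Since exterior equivalence of actions is manifestly a symmetric relation, this is enough.

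First I would record the two structural observations that make everything go through. Because $S$ is a group bundle we have $r(s)=s(s)=p(s)$, so each multiplier algebra $UM(A(r(s)))$ appearing in Definition \ref{def:53} is simply $UM(A(p(s)))$; and the extension $\overline{\id}$ of the trivial action to multiplier algebras is again the family of identity maps, since the identity map on $A(p(s))$ extends to the identity on $M(A(p(s)))$. Now take $G=S$ in Definition \ref{def:53}, let the role of ``$\alpha$'' be played by $\id$ and the role of ``$\beta$'' be played by $\alpha=\Ad u$, and use $\{u_s\}$ as the putative implementing family. Condition (a), $u_s\in UM(A(p(s)))$, is condition (a) of Definition \ref{def:51}. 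Condition (b) becomes $u_{st}=u_s\,\overline{\id}_s(u_t)=u_su_t$ whenever $p(s)=p(t)$, which is exactly condition (b) of Definition \ref{def:51}. Condition (c), joint continuity of $(s,a)\mapsto u_s a$, is condition (c) of Definition \ref{def:51}. Finally, condition (d) reads $\alpha_s=\Ad u_s\circ\id_s=\Ad u_s$, which holds by the definition of $\alpha$. Hence $\{u_s\}$ witnesses the exterior equivalence.

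There is essentially no obstacle here: the lemma is an unwinding of the definitions, the one mild point being to notice that the multiplier extension of the trivial action is trivial, so the cocycle condition in Definition \ref{def:53} collapses to the multiplicativity condition (b) of Definition \ref{def:51}. If one instead wishes to put $\alpha$ in the role of the first action and $\id$ in the role of the second, the implementing family is $\{u_s^*\}$: condition (d) becomes $\Ad u_s^*\circ\alpha_s=\Ad(u_s^*u_s)=\id_s$, and condition (b) becomes $u_{st}^*=u_s^*\,\overline{\alpha}_s(u_t^*)=u_s^*(u_su_t^*u_s^*)=u_t^*u_s^*=(u_su_t)^*$, again immediate from Definition \ref{def:51}(b). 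Either way the verification is routine, and one could further invoke Proposition \ref{prop:84} to deduce $A\rtimes_\alpha S\cong A\rtimes_{\id}S$, though the lemma as stated only asks for the exterior equivalence itself.
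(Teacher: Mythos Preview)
Your proposal is correct and follows exactly the approach the paper takes: the paper's proof simply states that the unitaries $\{u_s\}$ implementing $\alpha$ also implement the exterior equivalence between $\id$ and $\alpha$, leaving the verification of the four conditions of Definition~\ref{def:53} as an ``elementary calculation,'' which is precisely what you have carried out in detail. Your added remark about using $\{u_s^*\}$ in the other direction is extra but harmless.
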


\begin{proof}
Suppose $\alpha$ is implemented by the unitaries $\{u_s\}$.  Then an
elementary calculation shows that the 
$\{u_s\}$ also implement an equivalence between $\id$ and
$\alpha$.  
\end{proof}

\begin{remark}
The curious reader may wonder why we have only defined unitary actions
for a special class of groupoids.  Unitary actions should
always be equivalent to the trivial action and, as stated in Example
\ref{ex:25}, the trivial action only makes sense for group bundles.
Thus, it only makes sense to define unitary actions for group bundles.
\end{remark}

We want to show that crossed products of unitary dynamical systems are
tensor products.  However, we are working with fibred
objects so we need to use a tensor product which respects the bundle
structure on the algebras.  It is assumed that the reader is familiar
with the basics of balanced tensor products between $C_0(X)$-algebras,
although \cite{pullback} and \cite[Appendix B]{tfb} will serve as references.  

\begin{remark}
For completeness, let us recall that if $A$ and $B$ are
$C_0(X)$-algebras then the {\em balancing ideal} $I_X$ is the 
ideal in $A\otimes_{\max} B$ generated by 
\[
\{ f\cdot a \otimes b - a\otimes f\cdot b : f\in C_0(X), a\in A, b\in
B\}.
\]
The {\em balanced tensor product} $A\otimes_{C_0(X)} B$ is defined to
be the quotient $A\otimes_{\max} B/I_X$.  
Our tensor products will generally be maximal tensor products.    
However, most of the time we will be
working with nuclear $C^*$-algebras so that we will not have
to make this distinction.  
\end{remark}

We now prove the main theorem concerning unitary actions,
which is that they have trivial crossed products.  Because we are
working with universal completions, the proof is surprisingly
technical.  

\begin{theorem}
\label{thm:unitary}
Suppose $(A,S,\alpha)$ is a unitary dynamical system with
$\alpha$ implemented by $u$.  Then
there is a $C_0(S\unit)$-linear isomorphism $\phi:{C^*(S)\otimes_{C_0(S\unit)}
A}\rightarrow {A\rtimes_\alpha S}$ which is characterized for $a\in
A$ and $f\in C_c(S)$ by 
\begin{equation}
\phi(f\otimes a)(s) = f(s)a(p(s))u_s^*
\end{equation}
\end{theorem}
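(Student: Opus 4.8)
The plan is to use Lemma \ref{lem:24} to reduce to the trivial action and then construct the isomorphism directly for the trivial system. First I would invoke Lemma \ref{lem:24}, which says $(A,S,\alpha)$ is exterior equivalent to the trivial system $(A,S,\id)$ with the equivalence implemented by $\{u_s\}$; Proposition \ref{prop:84} then furnishes an isomorphism $\Phi: A\rtimes_{\id} S \to A\rtimes_\alpha S$ given on $\Gamma_c(S,p^*\mcal{A})$ by $\Phi(g)(s) = g(s)u_s^*$. This isomorphism is visibly $C_0(S\unit)$-linear since it acts fibrewise and the $C_0(S\unit)$-structure on both crossed products comes from the base. So the problem reduces to producing a $C_0(S\unit)$-linear isomorphism $\phi_0 : C^*(S)\otimes_{C_0(S\unit)} A \to A\rtimes_{\id} S$ with $\phi_0(f\otimes a)(s) = f(s)a(p(s))$; then $\phi := \Phi\circ\phi_0$ does the job, since $\Phi\circ\phi_0(f\otimes a)(s) = f(s)a(p(s))u_s^*$.

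For the trivial-action identification, I would work fibrewise using Proposition \ref{prop:88} and Remark \ref{rem:2}-style reasoning. Over a point $u\in S\unit$, the fibre of $A\rtimes_{\id} S$ is the ordinary group crossed product $A(u)\rtimes_{\id} S_u$, which by the classical theory (e.g. \cite[Lemma 2.73]{tfb2} or the standard fact that a trivial group action has crossed product $C^*(S_u)\otimes A(u)$) is isomorphic to $C^*(S_u)\otimes A(u)$ via $f\otimes a \mapsto (t\mapsto f(t)a)$. Meanwhile the fibre of $C^*(S)\otimes_{C_0(S\unit)} A$ over $u$ is $C^*(S)(u)\otimes A(u) \cong C^*(S_u)\otimes A(u)$, using that $C^*(S)(u)\cong C^*(S_u)$ (again from \cite[Proposition 1.2]{specpaper}-type results, applied with $A = C_0(S\unit)$, or directly). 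So the two algebras have the same fibres via compatible maps; the task is to assemble these fibrewise isomorphisms into a global isomorphism and check it has the stated form on elementary tensors. The cleanest route is to define $\phi_0$ on the algebraic tensor product $C_c(S)\odot A$ by the formula $\phi_0(f\otimes a)(s) = f(s)a(p(s))$, check it lands in $\Gamma_c(S,p^*\mcal{A})$ and is a $*$-homomorphism into $A\rtimes_{\id} S$ that kills the balancing ideal (so it factors through $C^*(S)\otimes_{C_0(S\unit)} A$), observe its range is dense (it contains enough sections to generate), and then argue injectivity/isometry.

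The main obstacle I expect is the norm control needed to push the algebraic map to the completed balanced tensor product and to prove it is isometric (injective), given that all the norms in sight are universal. Boundedness with respect to the inductive limit topology on $\Gamma_c(S,p^*\mcal{A})$ is straightforward, so the Disintegration Theorem \cite[Theorem 7.8]{renaultequiv} extends $\phi_0$ to $A\rtimes_{\id} S$-valued, but one must check it factors through the \emph{balanced} (not full) tensor product and that no norm is lost. My strategy for injectivity is again fibrewise: show that $\phi_0$ is compatible with the $C_0(S\unit)$-structures and that the induced map on each fibre is exactly the classical isomorphism $C^*(S_u)\otimes A(u) \to A(u)\rtimes_{\id}S_u$, which is known to be isometric; a $C_0(X)$-algebra is determined by its fibres, so a $C_0(S\unit)$-linear homomorphism that is fibrewise isometric is isometric. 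Assembling this requires identifying the fibre of the balanced tensor product correctly — that $\bigl(C^*(S)\otimes_{C_0(S\unit)}A\bigr)(u) \cong C^*(S)(u)\otimes A(u)$ — which is a standard but slightly delicate point about balanced tensor products of $C_0(X)$-algebras, and then matching it with Proposition \ref{prop:88}'s fibre identification for the crossed product. Once the fibrewise picture is set up, continuity of the resulting section field (so that $\phi_0(f\otimes a)$ really is a continuous section) follows from continuity of $s\mapsto f(s)a(p(s))$, which is immediate.
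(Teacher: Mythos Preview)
Your reduction to the trivial action via Lemma~\ref{lem:24} and Proposition~\ref{prop:84} is exactly what the paper does, and your target formula for $\phi_0$ matches. Where you diverge is in the proof that $\phi_0$ extends isometrically to $C^*(S)\otimes_{C_0(S\unit)}A$.

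The paper attacks this entirely through representation theory: it shows that every covariant representation $(\pi,U)$ of $(A,S,\id)$ yields commuting representations of $A$ and $C^*(S)$ (hence a representation $U\otimes\pi$ of the tensor product) with $\pi\rtimes U\circ\iota = U\otimes\pi$, which gives boundedness; and conversely, starting from a faithful representation of the balanced tensor product, it disintegrates, checks $C_0(S\unit)$-linearity, and then verifies the covariance relation~\eqref{eq:cov} $\nu$-almost everywhere via a density/null-set argument to manufacture a covariant pair. This two-way correspondence gives isometry without ever invoking the fibre structure of the balanced tensor product.

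Your route is the fibrewise one: identify $(C^*(S)\otimes_{C_0(S\unit)}A)(u)$ with $C^*(S_u)\otimes A(u)$, identify $(A\rtimes_{\id}S)(u)$ with $A(u)\rtimes_{\id}S_u$ via \cite[Proposition~1.2]{specpaper}, observe that $\phi_0$ induces the classical group-crossed-product isomorphism on each fibre, and conclude isometry from $\|a\|=\sup_u\|a(u)\|$. This is a legitimate and arguably cleaner argument, trading the paper's hands-on disintegration work for a structural fact about balanced tensor products (that the fibre is the max tensor of the fibres, available in the references \cite{pullback}, \cite[Appendix~B]{tfb}) together with the classical result \cite[Lemma~2.73]{tfb2}. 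What the paper's approach buys is self-containment: it never needs that fibre identification, which you correctly flag as the one point requiring care.

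One correction: your invocation of the Disintegration Theorem is misplaced. That theorem extends representations \emph{of} $\Gamma_c(S,p^*\mcal{A})$ that are inductive-limit continuous; it does not help you extend a map \emph{into} $\Gamma_c(S,p^*\mcal{A})$ from the algebraic tensor product. Your actual boundedness argument is the fibrewise one, and you should state it that way: on $C_c(S)\odot A$ the balanced-tensor norm and the crossed-product norm are both computed as $\sup_u$ of fibre norms, and the fibre maps are isometries, so $\phi_0$ is isometric on the nose and extends. Drop the Disintegration Theorem reference here.
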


\begin{remark}
Since $\phi$ is $C_0(S\unit)$-linear it factors 
to isomorphisms $\phi_u:C^*(S_u)\otimes A(u) \rightarrow
A(u)\rtimes S_u$.  It is not difficult to check that these are the
usual isomorphisms that arise from unitary actions 
\cite[Lemma 2.73]{tfb2}.
\end{remark}

\begin{proof}
First, let $\beta$ be a Haar system for $S$ and 
consider the trivial action $\id$ of $S$ on $A$.  Given $a\in A$
and $f\in C_c(S)$ define $\iota(f\otimes a)(s) := f(s)a(p(s))$.
It follows easily 
that $\iota(f\otimes a)\in\Gamma_c(S,p^*\mcal{A})$.  Extend $\iota$
to the algebraic tensor product $C_c(S)\odot A$ by linearity so that
$\iota:C_c(S)\odot A\rightarrow \Gamma_c(S,r^*\mcal{A})$. Observe that
$\ran \iota$ is dense with respect to the inductive limit topology by 
\cite[Proposition 1.3]{inducpaper}.    Simple calculations on
elementary tensors show that $\iota$ is a $*$-homomorphism.  
Now we check that $\iota$ is bounded.  Suppose 
$(\pi,U,S\unit*\mfrk{H},\mu)$ is a covariant
representation of $(A,S,\id)$.  Then $U$ is a groupoid representation
of $S$ and we can form the integrated representation as in Remark \ref{rem:4},
which we also denote by $U$.  Let the collection $\{\pi_u\}_{u\in
  S\unit}$ be a decomposition of $\pi$ as in
Section \ref{sec:group-cross-prod}.  
Since $(\pi,U)$ is covariant, we must have, for all $a\in A$ and almost
every $s\in S$,
\begin{equation}
\label{eq:99}
\pi_{p(s)}(a(p(s)))U_s = U_s\pi_{p(s)}(a(p(s))).
\end{equation}
However, we can now compute for $f\in C_c(S)$ and 
$h\in \mcal{L}^2(S\unit*\mfrk{H},\mu)$ that 
\[
(\pi(a)U(f))h(u) 
= \int_S \pi_u(a(u))f(s)U(s)h(u)\Delta(s)\neghalf d\beta^u(s) 
= (U(f)\pi(a))h(u).
\]
We can extend this by continuity to all $f\in C^*(S)$ and conclude
that $\pi$ and $U$ are commuting representations of $A$ and $C^*(S)$.
It follows from \cite[Theorem B.27]{tfb} that there exists a
representation $U\otimes\pi$ on $C^*(S)\otimes A$ such that
$U\otimes \pi(f\otimes a) = U(f)\pi(a)$.
Given $f\in C_c(S)$ and $a\in A$ we check that 
\begin{align}
\label{eq:103}
\pi\rtimes U(\iota(f\otimes a))h(u) &= \int_S
\pi_u(f(s)a(u))U_sh(u)\Delta(s)\neghalf d\beta^u(s) \\ \nonumber
&= \pi_u(a(u))\int_S f(s)U_sh(u)\Delta(s)\neghalf d\beta^u(s) \\ \nonumber
&= \pi(a)U(f)h(u) = U\otimes \pi(f\otimes a)h(u).
\end{align}
Using linearity, we conclude that $\pi\rtimes U(\iota(\xi)) = U\otimes \pi(\xi)$
for all $\xi \in C_c(S)\odot A$.  Thus, given $\xi\in C_c(S)\odot A$, 
$\|\pi\rtimes U(\iota(\xi))\| = \|U\otimes \pi(\xi)\|
\leq \|\xi\|$.
Since this is true for all covariant representations, it
follows that $\iota$ is bounded and extends to a homomorphism on
$C^*(S)\otimes A$. Furthermore, since the range of $\iota$ is dense, it
must be surjective. What's more, given
$\phi\in C_0(S\unit)$, $f\in C_c(S)$ and $a\in A$ we have 
\[
\iota(\phi\cdot f\otimes a)(s) = \phi(p(s))f(s)a(p(s)) = 
\iota(f\otimes\phi\cdot a)(s).
\]
It follows by continuity and linearity that $\iota$ factors through
the balancing ideal and induces a surjective homomorphism
$\hat{\iota}:C^*(S)\otimes_{C_0(S\unit)} A\rightarrow A\rtimes S$.  

We would like to show that $\hat{\iota}$ is isometric.  Suppose $R$ is
a faithful representation of $C^*(S)\otimes_{C_0(S\unit)} A$ and let
$\overline{R}$ be its lift to $C^*(S)\otimes A$.  It follows
\cite[Corollary B.22]{tfb} that there are commuting representations $\pi$
and $U$ of $A$ and $C^*(S)$ such that $\overline{R}=U\otimes \pi$.
Furthermore, since $U\otimes \pi$ contains the balancing ideal, a
quick computation shows that $U(\phi\cdot f)\pi(a) =
U(f)\pi(\phi\cdot a)$ for all $\phi \in C_0(S\unit)$, $f\in C^*(S)$,
and $a\in A$.  Now, without loss of generality, we can use the
Disintegration Theorem to assume that $U$ is the integrated form of some
groupoid representation $(U,S\unit*\mfrk{H},\mu)$.  Furthermore we
have, for all $\phi\in C_0(S\unit)$, $a\in A$, and $f\in C_c(S)$
\[
\pi(\phi\cdot a)U(f)h(u) = \int_S \phi(u)f(s)U_s
\pi(a)h(u)\Delta(s)\neghalf d\beta^u(s)  = \phi(u)U(f)\pi(a)h(u).
\]
Since $U$ is nondegenerate, this implies that $\pi$ is
$C_0(S\unit)$-linear.  
Suppose the collection $\{\pi_u\}$ is a decomposition of $\pi$ and
let $\nu$ be the measure on $S$ induced by $\mu$.  All we need to do 
to prove that $(\pi,U)$ is a covariant representation of $(A,S,\id)$
is verify the covariance relation.  Let $\{a_i\}$ be a
countable dense subset in $A$ and $e_l$ a special orthogonal
fundamental sequence for $S\unit* \mfrk{H}$ \cite[Remark F.7]{tfb2}.  Since 
$\pi$ and $U$ commute, we have for all $i,l,k$ and $f\in C_c(S)$
\begin{align*}
0 =& (\pi(a_i)U(f)e_l,e_k) - (U(f)\pi(a_i)e_l,e_k) \\
=& \int_S (f(s)\pi_{p(s)}(a_i(p(s)))U_s
e_l(p(s)),e_k(p(s)))\Delta(s)\neghalf d\nu(s) \\
&- 
 \int_S
 (f(s)U_s\pi_{p(s)}(a_i(p(s)))e_l(p(s)),e_k(p(s)))\Delta(s)\neghalf
 d\nu(s) \\
=& \int_S f(s)((\pi_{p(s)}(a_i(p(s)))U_s -
U_s\pi_{p(s)}(a_i(p(s))))e_l(p(s)),e_k(p(s))) \Delta(s)\neghalf d\nu(s).
\end{align*}
This holds for all $f\in C_c(S)$ so that we may conclude for each $i,l$
and $k$ there exists a $\nu$-null set $N_{i,l,k}$ such that 
\begin{equation}
\label{eq:102}
((\pi_{p(s)}(a_i(p(s)))U_s -
U_s\pi_{p(s)}(a_i(p(s))))e_l(p(s)),e_k(p(s))) = 0
\end{equation}
for all $s\not\in N_{i,l,k}$.  However, if we let $N = \bigcup_{i,l,k}
N_{i,l,k}$ then $N$ is still a $\nu$-null set and for each $s\not\in
N$ \eqref{eq:102} holds for all $i,l$ and $k$.  Since $\{e_l(p(s))\}$
is a basis (plus zero vectors) for each $p(s)$, this implies that for
$s\not\in N$ we have 
$\pi_{p(s)}(a_i(p(s)))U_s = U_s \pi_{p(s)}(a_i(p(s)))$
for all $i$.  Because $\{a_i\}$ is dense in $A$,
this holds for all $a\in A$.  Thus $(\pi, U)$ is a covariant
representation of $(A,S,\id)$.  Furthermore, we can reuse the
computation in \eqref{eq:103} to conclude
that $\pi\rtimes U\circ \iota =
\pi\otimes U$.  Given $\xi\in C^*(S)\otimes A$ let $\xi'$ be its
image in $C^*(S)\otimes_{C_0(S\unit)} A$.  We then have
\[
\|\xi'\| = \|R(\xi')\| = \|U\otimes \pi(\xi)\| = 
\|\pi\rtimes U(\iota(\xi))\| \leq \|\iota(\xi)\| = \|\hat{\iota}(\xi')\|.  
\]
It follows that $\hat{\iota}$ is isometric and is therefore an
isomorphism.  

To finish the proof, observe that because of Proposition \ref{prop:84} and
Lemma \ref{lem:24}, the map $\psi:A\rtimes_{\id} S\rightarrow
A\rtimes_\alpha S$ given by $\psi(f)(s) = f(s)u_s^*$ is an
isomorphism.  Thus $\phi = \psi\circ \hat{\iota}$ is an isomorphism from
$C^*(S)\otimes_{C_0(S\unit)} A$ onto $A\rtimes_\alpha S$.  It follows
quickly that $\phi$ is $C_0(S\unit)$-linear and has the correct form. 
\end{proof}

\section{Locally Unitary Actions}
\label{sec:locally-unitary}

Now that we have developed the theory of unitary actions
we can modify Definition \ref{def:51} and introduce a new concept.
The basic idea is that we weaken the continuity condition and see what
kind of structure we have left.  

\begin{definition}
\label{def:56}
Suppose $S$ is a group bundle and $A$ is a $C_0(S\unit)$-algebra. A
dynamical system $(A,S,\alpha)$ is said to be {\em locally unitary} if
there is an open cover $\{U_i\}_{i\in I}$ of $S\unit$ such that
$(A(U_i),S|_{U_i},\alpha|_{S|_{U_i}})$ is unitarily implemented for all
$i\in I$.  
\end{definition}

Our goal will be to analyze the exterior equivalence classes of
abelian locally unitary
actions on $C^*$-algebras with Hausdorff spectrum.  In particular, the
rest of the $C^*$-algebras in this section will have Hausdorff
spectrum and we will view them as $C_0(\widehat{A})$-algebras in the
usual fashion.  

\begin{remark}
If $A$ has Hausdorff spectrum then $A$ is naturally a
$C_0(\widehat{A})$-algebra.  The fibres are given by $A(\pi) = A/\ker
\pi$ for all $\pi\in \widehat{A}$.  In particular, since we are
assuming separability, every fibre is elementary and isomorphic to the
compacts.  As such each fibre has a unique faithful irreducible representation
(up to equivalence).  
\end{remark}

With this assumption we obtain a very nice identification of the
spectrum for unitary crossed products.  

\begin{prop}
\label{prop:87}
Suppose $S$ is an abelian locally compact Hausdorff
group bundle with a Haar system, that $A$ is a $C^*$-algebra with
Hausdorff spectrum $S\unit$ and that $(A,S,\alpha)$ 
is a unitary dynamical system.  Let $\{u_s\}$
be the unitaries implementing $\alpha$ and for all $v\in S\unit$ let
$\pi_v$ be the unique irreducible representation
of $A(v)$. Define, for $\omega\in \widehat{S}$, 
\[
\omega\overline{\pi}_{\hat{p}(\omega)}(u)(s) :=
\omega(s)\overline{\pi}_{\hat{p}(\omega)}(u_s).
\]
Then the map $\phi :\widehat{S} \rightarrow
(A\rtimes_\alpha S)\sidehat$ given by $\phi(\omega) = \pi_{\hat{p}(\omega)}\rtimes
\omega \overline{\pi}_{\hat{p}(\omega)}(u)$ is a bundle homeomorphism.
\end{prop}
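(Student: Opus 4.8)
The plan is to deduce the statement from Theorem~\ref{thm:unitary} by identifying the spectrum of the resulting tensor product. By Theorem~\ref{thm:unitary} there is a $C_0(S\unit)$-linear isomorphism $C^*(S)\otimes_{C_0(S\unit)}A\cong A\rtimes_\alpha S$ carrying $f\otimes a$ to the section $s\mapsto f(s)a(p(s))u_s^*$. Combining this with the Fourier/Gelfand identification $C^*(S)\cong C_0(\widehat S)$ of $C_0(S\unit)$-algebras (under which the fibre $C^*(S_v)$ is carried to $C_0(\widehat{S_v})$) gives $A\rtimes_\alpha S\cong C_0(\widehat S)\otimes_{C_0(S\unit)}A$. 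Since $A$ has Hausdorff spectrum $S\unit$ with elementary fibres, $\mcal A$ is a continuous field of elementary $C^*$-algebras; hence the balanced tensor product $C_0(\widehat S)\otimes_{C_0(S\unit)}A$ is isomorphic to the pullback $\hat p^*A=\Gamma_0(\widehat S,\hat p^*\mcal A)$, again a continuous field of elementary $C^*$-algebras, this time over the locally compact Hausdorff space $\widehat S$. Such an algebra has Hausdorff spectrum canonically identified with $\widehat S$: every irreducible representation factors through a fibre $A(\hat p(\omega))$, hence through its unique irreducible representation $\pi_{\hat p(\omega)}$, and the $C_0(\widehat S)$-algebra structure makes the resulting bijection a homeomorphism. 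Chaining the isomorphisms yields a homeomorphism $\Psi\colon\widehat S\to(A\rtimes_\alpha S)\sidehat$.

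It remains to check that $\Psi$ coincides with $\phi$. Write $v=\hat p(\omega)$. The irreducible representation of $\hat p^*A$ over $\omega$ is $\pi_v\circ\ev_\omega$; transporting it back through the pullback--tensor identification, through the Fourier transform $C_c(S)\ni f\mapsto\widehat f\in C_0(\widehat S)$ normalized by $\widehat f(\omega)=\int_{S_v}f(s)\omega(s)\,d\beta^v(s)$, and through the isomorphism of Theorem~\ref{thm:unitary}, a direct manipulation of the integrated form \eqref{eq:36} gives, for $g$ in $\Gamma_c(S,p^*\mcal A)$,
\[
\Psi(\omega)(g)\,\xi=\int_{S_v}\pi_v\bigl(g(s)\bigr)\,\omega(s)\,\overline{\pi}_v(u_s)\,\xi\,d\beta^v(s),
\]
which is precisely $\pi_v\rtimes\bigl(\omega\overline{\pi}_v(u)\bigr)$; here one uses $u_s^*u_s=1$, the multiplicativity of $\overline{\pi}_v$ on $M(A(v))$, and that the fibre groups $S_v$ are abelian, hence unimodular, so the modular function restricts to $1$ on $S_v$. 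The same computation shows $s\mapsto\omega(s)\overline{\pi}_v(u_s)$ is a unitary representation of $S_v$ and that $\pi_v\rtimes(\omega\overline{\pi}_v(u))$ is irreducible --- being, on the fibre $A(v)\rtimes S_v$, the classical irreducible representation attached to a unitary action of an abelian group on an elementary $C^*$-algebra --- so $\phi$ is in fact well defined; and $\phi=\Psi$, so $\phi$ is a homeomorphism.

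Finally, $\phi$ is a bundle map over $S\unit$: by construction $\phi(\omega)$ factors through the fibre $A(\hat p(\omega))\rtimes S_{\hat p(\omega)}$, so $q\circ\phi=\hat p$ for the map $q\colon(A\rtimes_\alpha S)\sidehat\to S\unit$ of Remark~\ref{rem:2}. A homeomorphism which is a bundle map is a bundle homeomorphism, which is the claim. I expect the real work to lie not in the final bookkeeping but in the structural inputs: the identification $C^*(S)\cong C_0(\widehat S)$ as $C_0(S\unit)$-algebras (with $\widehat S$ carrying the topology making it a locally compact Hausdorff bundle over $S\unit$), the identification of the balanced tensor product $C_0(\widehat S)\otimes_{C_0(S\unit)}A$ with the pullback $\hat p^*A$, and the consequent computation of its spectrum --- put differently, the delicate point is that the fibrewise-evident bijection $\phi$ transports the global topologies correctly, the Fourier-normalization check needed to match $\Psi$ with the stated formula being routine.
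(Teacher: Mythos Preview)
Your proof is correct and follows essentially the same route as the paper: both deduce the result from Theorem~\ref{thm:unitary} by computing the spectrum of $C^*(S)\otimes_{C_0(S\unit)}A$ and then verifying, by a computation on elementary tensors, that the resulting homeomorphism is given by the integrated representation $\pi_{\hat p(\omega)}\rtimes\omega\overline{\pi}_{\hat p(\omega)}(u)$. The only cosmetic difference is in how the spectrum of the balanced tensor product is identified: the paper invokes \cite[Lemma~1.1]{pullback} directly to obtain $(C^*(S)\otimes_{C_0(S\unit)}A)\sidehat\cong\widehat S\times_{S\unit}\widehat A$ via $(\omega,\pi)\mapsto\omega\otimes_\sigma\pi$, whereas you first pass through Gelfand duality $C^*(S)\cong C_0(\widehat S)$ and the pullback identification $C_0(\widehat S)\otimes_{C_0(S\unit)}A\cong\hat p^*A$ before reading off the spectrum --- but these are two packagings of the same fact, and the final verification is the same computation in both.
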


\begin{proof}
Let $(A,S,\alpha)$ and $u$ be as above. 
It follows from Theorem \ref{thm:unitary} that the map
$\psi:C^*(S)\otimes_{C_0(S\unit)} A\rightarrow A\rtimes S$ characterized
by $\psi(a\otimes f)(s) = f(s)a u_s^*$ is an isomorphism.  
Therefore, there is a homeomorphism $\phi_1:(C^*(S)\otimes_{C_0(S\unit)}
A)\sidehat\rightarrow (A\rtimes S)\sidehat$ such that
$\phi_1(R) = R\circ\psi\inv$.  Next, recall that we identify the dual
group bundle $\widehat{S}$ with $C^*(S)\sidehat$ \cite{bundleduality}.
Define $\widehat{S}\times_{S\unit} \widehat{A} := 
\{ (\omega,\pi_{\hat{p}(\omega)})\in\widehat{S}\times\widehat{A} :
\omega\in \widehat{S}\}$.
Since $C^*(S)$ is an abelian $C^*$-algebra, and is therefore GCR and
nuclear, it follows from \cite[Lemma 1.1]{pullback} that
$\phi_2:\widehat{S}\times_{S\unit}\widehat{A}\rightarrow(C^*(S)\otimes_{C_0(S\unit)}
A)\sidehat$ given by $\phi_2(\omega,\pi) = \omega\otimes_\sigma \pi$ 
is a homeomorphism.  Recall that if $\pi$ is a representation on $\mcal{H}$
then $\omega\otimes_\sigma \pi$ is the representation on $\mathbb{C} \otimes \mcal{H}$,
which we will of course identify with $\mcal{H}$, characterized by 
$\omega\otimes_\sigma \pi(f\otimes a) = \omega(f)\pi(a)$.  Moving on, since
$\widehat{A}= S\unit$ we can define another homeomorphism
$\phi_3:\widehat{S}\rightarrow \widehat{S}\times_{S\unit} \widehat{A}$
by $\phi_3(\omega)= (\omega,\pi_{\hat{p}(\omega)})$.  
Let $\phi = \phi_1\circ\phi_2\circ\phi_3$ and observe that
$\phi:\widehat{S}\rightarrow (A\rtimes S)\sidehat$ is a
homeomorphism.  Furthermore given $\omega\in \widehat{S}$ we have
$\phi(\omega) = \omega\otimes_\sigma \pi_{\hat{p}(\omega)}\circ\psi\inv$.  

Now fix $x\in S\unit$, $\omega \in \widehat{S}_x$ and define the map
$U:S_x\rightarrow U(\mcal{H})$ by $U_s = 
\omega(s)\overline{\pi}_x(u_s)$.  Since $u$ and $\omega$ are
continuous, it follows quickly that $U$ is a
unitary representation of $S_x$.  Furthermore, we can compute for $a\in
A(x)$ and $s\in S_x$ that 
\[
U_s\pi_x(a) = \omega(s)\pi_x(u_sa) = \omega(s)\pi_x(u_s a
u_s^*u_s) = \pi_x(\alpha_s(a))U_s.
\]
Thus $(\pi_x,U)$ is a covariant representation of
$(A(x),S_x,\alpha)$. As such we can form the integrated representation
$\pi_x\rtimes U$.   Recall that $A\rtimes S$ is a
$C_0(S\unit)$-algebra and that the restriction map $\rho$ 
factors to an isomorphism 
between $A\rtimes S(x)$ and $A(x)\rtimes S_x$.   Using the
restriction map to view  $\pi_x\rtimes U$ as a representation of
$A\rtimes S$ we claim that $\pi_x\rtimes U = \phi(\omega)$.
It will suffice to show that given an elementary tensor $f\otimes a$
then $\pi_x\rtimes U(\psi(f\otimes a)) = \omega\otimes_\sigma
\pi_x(f\otimes a)$. We compute, observing that the modular function is
one since $S$ is abelian,
\begin{align*}
\pi_x\rtimes U(\psi(f\otimes a))h
&= \int_S \pi_x(f(s)a(x)u_s^*)\omega(s) \overline{\pi}_x(u_s)h d\beta^x(s)
\\
&= \int_S f(s)\omega(s)d\beta^x(s) \pi_x(a(x))h \\
&= \omega(f)\pi_x(a)h = (\omega\otimes_\sigma \pi_x)(f\otimes a)h.
\end{align*}
Thus $\phi(\omega) = \pi_x\rtimes U$ and, since $U$ is just an abbreviated
notation for $\omega\overline{\pi}_x(u)$, we are done.
\end{proof}

\subsection{Characterization}

We saw in Proposition \ref{prop:87} that the spectrum of the unitary
crossed product was homeomorphic to $\widehat{S}$.  If the action were
locally unitary then it is interesting to ask if the spectrum is
locally homeomorphic to $\widehat{S}$ and if it is, in fact, a
principal $\widehat{S}$-bundle.  The answer is given in the following

\begin{theorem}
\label{thm:locunit}
Suppose $S$ is an abelian group bundle with a Haar system, 
that $A$ is a $C^*$-algebra with
Hausdorff spectrum $S\unit$ and that $(A,S,\alpha)$ 
is a locally unitary dynamical system.  Let $u^i$ implement $\alpha$
on $S|_{U_i}$ where $\{U_i\}$ is an open cover of $S\unit$ and let
$q:(A\rtimes_\alpha S)\sidehat\rightarrow S\unit$ be the bundle map.  
Then for each $i$ the map $\psi_i:\hat{p}\inv(U_i)\rightarrow
q\inv(U_i)$ such that 
\begin{equation}
\label{eq:105}
\psi_i(\omega) = \pi_{\hat{p}(\omega)}\rtimes
\omega\overline{\pi}_{\hat{p}(\omega)}(u^i)
\end{equation}
is a homeomorphism and the map $\gamma_{ij}$ such that 
\begin{equation}
\gamma_{ij}(p(s))(s) = \overline{\pi}_{p(s)}((u_s^i)^*u_s^j)
\end{equation}
defines a continuous section of $\widehat{S}$.  Furthermore, these maps
make $(A\rtimes S)\sidehat$ into a principal $\widehat{S}$-bundle with 
trivialization $(\mcal{U},\psi\inv,\gamma)$.  
\end{theorem}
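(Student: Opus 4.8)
The plan is to deduce the theorem by applying Proposition~\ref{prop:87} on each member of the cover and then reading off the transition sections from the way the local implementations $u^i$ and $u^j$ differ on the overlaps $U_{ij}$. For the first assertion, fix $i$. By Proposition~\ref{prop:88} the algebra $A\rtimes_\alpha S(U_i)$ is $C_0(U_i)$-isomorphic to $A(U_i)\rtimes_\alpha S|_{U_i}$, and by Remark~\ref{rem:2} this identification is compatible with the identification of $q\inv(U_i)$, $(A\rtimes_\alpha S(U_i))\sidehat$ and $(A(U_i)\rtimes_\alpha S|_{U_i})\sidehat$ as topological spaces. Since $\widehat A = S\unit$, the algebra $A(U_i)=\Gamma_0(U_i,\mcal A)$ has Hausdorff spectrum $U_i$, with fibres $A(v)$ and unique irreducible representations $\pi_v$, and $(A(U_i),S|_{U_i},\alpha|_{S|_{U_i}})$ is a unitary dynamical system implemented by $u^i$. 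Proposition~\ref{prop:87} then produces a bundle homeomorphism $\widehat{S|_{U_i}}\rightarrow (A(U_i)\rtimes_\alpha S|_{U_i})\sidehat$, $\omega\mapsto\pi_{\hat p(\omega)}\rtimes\omega\overline{\pi}_{\hat p(\omega)}(u^i)$. Identifying $\widehat{S|_{U_i}}$ with $\hat p\inv(U_i)$ (cf.\ \cite{bundleduality}) and the right-hand side with $q\inv(U_i)$ as above, this map is precisely $\psi_i$, so $\psi_i\colon\hat p\inv(U_i)\to q\inv(U_i)$ is a homeomorphism with $q\circ\psi_i=\hat p$.

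Next I would analyse $\gamma_{ij}$. For $s\in S|_{U_{ij}}$ both $\Ad u^i_s$ and $\Ad u^j_s$ equal $\alpha_s$, so $(u^i_s)^*u^j_s$ is a unitary multiplier of $A(p(s))$ that commutes with $A(p(s))$. As $A(p(s))$ is elementary, the centre of $M(A(p(s)))$ is scalar, so $(u^i_s)^*u^j_s$ — equivalently $\overline{\pi}_{p(s)}((u^i_s)^*u^j_s)$ — is a scalar of modulus one, which we denote $\gamma_{ij}(p(s))(s)$. From $u^j_s=\gamma_{ij}(p(s))(s)\,u^i_s$ and the fact that $u^i$ and $u^j$ are fibrewise homomorphisms (scalars being central) one obtains $\gamma_{ij}(v)(st)=\gamma_{ij}(v)(s)\gamma_{ij}(v)(t)$ for $s,t\in S_v$; thus $\gamma_{ij}(v)\in\widehat S_v$ for each $v\in U_{ij}$ and $\gamma_{ij}$ is a section of $\widehat S$ over $U_{ij}$.

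It remains to glue. Fix $v\in U_{ij}$ and $\omega\in\widehat S_v$, and set $\omega'=\gamma_{ij}(v)\omega$. For $s\in S_v$,
\[
\omega'(s)\overline{\pi}_v(u^i_s)=\gamma_{ij}(v)(s)\,\omega(s)\,\overline{\pi}_v(u^i_s)=\omega(s)\,\overline{\pi}_v(u^j_s),
\]
so $\omega'\overline{\pi}_v(u^i)$ and $\omega\overline{\pi}_v(u^j)$ are the same representation of $S_v$, whence $\psi_i(\omega')=\psi_j(\omega)$. Therefore $\psi_i\inv\circ\psi_j(\omega)=\gamma_{ij}(\hat p(\omega))\,\omega$, which is exactly the transition relation required by the definition of a principal $\widehat S$-bundle with trivializing maps $\phi_i=\psi_i\inv$ and transition maps $\gamma=\{\gamma_{ij}\}$. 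Evaluating this relation at the trivial character $\omega=1_v$ gives $\gamma_{ij}(v)=\psi_i\inv\circ\psi_j(1_v)$; since $u\mapsto 1_u$ is a continuous section of $\widehat S$ and $\psi_i\inv\circ\psi_j$ is continuous, $\gamma_{ij}$ is continuous. As the $q\inv(U_i)$ are open and cover $(A\rtimes_\alpha S)\sidehat$, the spectrum is a locally trivial bundle over the Hausdorff space $S\unit$ modelled on the locally compact Hausdorff group bundle $\widehat S$, hence is itself locally compact Hausdorff, and $(\mcal U,\psi\inv,\gamma)$ is a trivialization making it a principal $\widehat S$-bundle.

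The real content lies in the first two steps: one must keep the spectral identifications of Propositions~\ref{prop:88} and \ref{prop:87} and Remark~\ref{rem:2} consistent, and one must use that the fibres $A(v)$ are elementary so that the cocycle $(u^i_s)^*u^j_s$ is scalar-valued — without that it would not land in $\widehat S$ at all. Given these, the gluing step is a short computation. I expect the fussiest point to be the continuity of $\gamma_{ij}$; the argument above via $1_v$ circumvents it, but one can also check it by hand, using the joint continuity hypotheses on $u^i,u^j$ and a local section $a$ of $\mcal A$ with $a(v)$ a rank-one projection to see that $s\mapsto\gamma_{ij}(p(s))(s)$ is continuous on $S|_{U_{ij}}$, a continuous fibrewise character then defining a continuous section of $\widehat S$.
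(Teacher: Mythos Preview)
Your proposal is correct and follows essentially the same route as the paper: apply Proposition~\ref{prop:87} over each $U_i$ (via the spectral identifications of Proposition~\ref{prop:88} and Remark~\ref{rem:2}) to obtain the $\psi_i$, use that both $u^i$ and $u^j$ implement $\alpha$ on overlaps to see that $(u^i_s)^*u^j_s$ is central and hence scalar in the elementary fibre, and then read off the transition relation $\psi_i\inv\circ\psi_j(\omega)=\gamma_{ij}(\hat p(\omega))\omega$. The only notable variation is your continuity argument for $\gamma_{ij}$: the paper simply asserts it ``follows quickly'' from the transition relation, whereas you make this explicit by evaluating at the unit section $v\mapsto 1_v$, which is a clean way to extract it.
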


\begin{proof}
Let $(A,S,\alpha)$ be as in the statement of the theorem.  Let
$\{u^i\}$ implement $\alpha$ on $S|_{U_i}$ where $U_i$ is an element of some
open cover $\mcal{U}$.  Given an open set $U\in\mcal{U}$ we identify
each of $(A(U)\rtimes S|_{U})\sidehat$, 
$(A\rtimes S(U))\sidehat$, and $q\inv(U)$ with the
disjoint union $\coprod_{x\in U}(A(x)\rtimes S_x)\sidehat$ as in
Remark \ref{rem:2}.
In a similar fashion we identify each of $(C^*(S)(U))\sidehat$,
$C^*(S|_U)\sidehat$ and $\hat{p}\inv(U)$ with the disjoint union
$\coprod_{x\in U}\widehat{S}_x$.

Now, fix $U_i\in\mcal{U}$.  By assumption 
$\alpha|_{S|_{U_i}}$, denoted $\alpha$ whenever possible, is
unitarily implemented by $\{u^i\}$ and as such Proposition
\ref{prop:87} implies that the map $\psi_i:(S|_{U_i})\sidehat
\rightarrow  (A(U_i)\rtimes S|_{U_i})\sidehat$ defined via
\eqref{eq:105} is a homeomorphism.  However, under the
identifications made in the previous paragraph, we can view $\psi_i$ as
a map from $\hat{p}\inv(U_i)$ onto $q\inv(U_i)$.  We
define the trivializing maps on $(A\rtimes S)\sidehat$ to be $\phi_i =
\psi_i\inv$.  What's more, since $(A\rtimes S)\sidehat$ is locally
homeomorphic to a locally compact Hausdorff space, we can conclude that
$(A\rtimes S)\sidehat$ is locally compact Hausdorff.  

Next, suppose $U_i,U_j\in\mcal{U}$ and for each $x\in
U_{ij}$ let $\pi_x$ be the (unique) irreducible representation of
$A(x)$.  On $A(x)\rtimes S_x$ both $u^i$ and $u^j$ implement $\alpha$
so that we compute, for $s\in S_x$ and $a\in A(x)$,
\[
\overline{\pi}_x((u_s^i)^*u_s^j)\pi_x(a) = 
\pi_x((u_s^i)^* u_s^j a) = \pi_x(\alpha_s\inv(\alpha_s(a))(u_s^i)^*u_s^j)
= \pi_x(a)\overline{\pi}_x((u_s^i)^*u_s^j).
\]
Since $\pi_x$ is irreducible, it follows \cite[Lemma A.1]{tfb} that
$\gamma_{ij}(x)(s):= \overline{\pi}_x((u_s^i)^*u_s^j)$ is a scalar.
Since $u_s^i$ and $u_s^j$ are unitaries, $\gamma_{ij}(x)(s)$ must be a
unitary as well and therefore has modulus one.  Some simple
computations show that $\gamma_{ij}(x)$ is a continuous homomorphism
so that $\gamma_{ij}(x)\in\widehat{S}_x$.  Thus $\gamma_{ij}$ is a
section of $\widehat{S}$ on $U_{ij}$ and we compute for
$\omega\in \hat{p}\inv(U_{ij})$
\begin{equation}
\label{eq:106}
\phi_i\circ\phi_j\inv(\omega) = \psi_i\inv\circ\psi_j(\omega) = 
\psi_i(\pi_{\hat{p}(\omega)}\rtimes
\omega\overline{\pi}_{\hat{p}(\omega)}(u^j)).
\end{equation}
Given $s\in S_{\hat{p}(\omega)}$ we have 
\begin{equation}
\label{eq:1}
\overline{\pi}_{\hat{p}(\omega)}(u_s^j)  = 
\overline{\pi}_{\hat{p}(\omega)}(u_s^i)\overline{\pi}_{\hat{p}(\omega)}((u_s^i)^*u_s^j)
= \gamma_{ij}(\hat{p}(\omega))(s)\overline{\pi}_{\hat{p}(\omega)}(u_s^i).
\end{equation}
Applying \eqref{eq:1} to \eqref{eq:106} we obtain
\[
\phi_i\circ\phi_j\inv(\omega) = 
\psi_i\inv(\pi_{\hat{p}(\omega)}\rtimes
(\omega\gamma_{ij}(\hat{p}(\omega))\overline{\pi}_{\hat{p}(\omega)}(u^i)))
= \omega\gamma_{ij}(\hat{p}(\omega))
\]
This shows that the $\gamma_{ij}$ are transition
functions for the $\phi_i$.  It follows quickly that $\gamma_{ij}$ is
continuous and that the trivialization $(\mcal{U},\phi,\gamma)$ makes
$(A\rtimes S)\sidehat$ into a principal $\widehat{S}$-bundle.
\end{proof}

Of course this is little more than a curiosity unless we can use the
principal bundle structure to tell us something about the action
$\alpha$.  Fortunately, we can do just that.  

\begin{theorem}
\label{thm:unique}
Suppose $S$ is an abelian group bundle with a Haar system 
and that $A$ has Hausdorff spectrum $S\unit$.
Two locally unitary actions $(A,S,\alpha)$ and $(A,S,\beta)$ are
exterior equivalent if and only if
$(A\rtimes_\alpha S)\sidehat$ and $(A\rtimes_\beta S)\sidehat$ are
isomorphic as $\widehat{S}$-bundles.  
\end{theorem}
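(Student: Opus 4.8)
The plan is to treat the two implications separately; the forward implication is quick and the converse carries essentially all of the content.

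For the forward direction, assume $\{u_s\}$ implements an exterior equivalence of $\alpha$ and $\beta$. I would first apply Proposition~\ref{prop:84} to get the isomorphism $\Phi\colon A\rtimes_\alpha S\to A\rtimes_\beta S$ with $\Phi(f)(s)=f(s)u_s^*$ on $\Gamma_c(S,p^*\mcal{A})$. Since $u_s\in UM(A(p(s)))$ this $\Phi$ is $C_0(S\unit)$-linear, so the induced homeomorphism $\widehat\Phi=\Phi^*$ of $(A\rtimes_\beta S)\sidehat$ onto $(A\rtimes_\alpha S)\sidehat$ carries $q\inv(v)$ onto $q\inv(v)$ for every $v$. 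It then remains only to see $\widehat\Phi$ is $\widehat S$-equivariant, after which the characterization of $\widehat S$-bundle isomorphisms by continuous equivariant maps (the Proposition preceding Remark~\ref{rem:1}) finishes this direction. To check equivariance I would first observe that the $\widehat S$-action on $(A\rtimes_\alpha S)\sidehat$ arising from Theorem~\ref{thm:locunit} (via Proposition~\ref{prop:27}) is, in the notation of Proposition~\ref{prop:87}, given fibrewise by $\omega\cdot(\pi_v\rtimes U)=\pi_v\rtimes(\omega U)$ where $(\omega U)_s=\omega(s)U_s$; one sees directly that this is the action induced on the spectrum by the fibrewise dual action $\hat\alpha_\omega(f)(s)=\omega(s)f(s)$ of $\widehat S_v$ on $A(v)\rtimes S_v$, and similarly for $\beta$. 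Then equivariance reduces to the one-line identity $\hat\beta_\omega(\Phi(f))(s)=\omega(s)f(s)u_s^*=\Phi(\hat\alpha_\omega(f))(s)$, showing $\Phi$ intertwines the fibrewise dual actions.

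For the converse, assume $(A\rtimes_\alpha S)\sidehat$ and $(A\rtimes_\beta S)\sidehat$ are isomorphic $\widehat S$-bundles. I would begin by passing to a common refinement, so that a single open cover $\{U_i\}$ of $S\unit$ supports unitaries $u^i$, $v^i$ implementing $\alpha$, $\beta$ on $S|_{U_i}$. By Theorem~\ref{thm:locunit} the two bundles carry trivializations over $\{U_i\}$ with transition sections $\gamma^\alpha_{ij}(x)(s)=\overline\pi_x((u^i_s)^*u^j_s)$ and $\gamma^\beta_{ij}(x)(s)=\overline\pi_x((v^i_s)^*v^j_s)$, and---exactly as in that proof---$(u^i_s)^*u^j_s$ and $(v^i_s)^*v^j_s$ are scalars in $M(A(x))$ because each $A(x)$ is elementary and $\pi_x$ irreducible, so under the identification $\overline\pi_x\colon M(A(x))\xrightarrow{\sim}B(\mcal{H}_x)$ they are just $\gamma^\alpha_{ij}(x)(s)$ and $\gamma^\beta_{ij}(x)(s)$. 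Since the bundles are isomorphic, Theorem~\ref{prop:principcohom} forces $[\gamma^\alpha]=[\gamma^\beta]$ in $H^1(\widehat S)$, and hence---after one further refinement---there are continuous sections $c_i\in\Gamma(U_i,\widehat S)$ with $c_i\gamma^\alpha_{ij}=c_j\gamma^\beta_{ij}$ on $U_{ij}$. The step I expect to be the genuine obstacle---largely in recognizing that this is the right move---is the observation that this coboundary is precisely the scalar correction needed to patch the local unitaries together: one sets $w_s:=c_i(p(s))(s)\,v^i_s(u^i_s)^*$ for $s\in S|_{U_i}$, and the computation $v^j_s(u^j_s)^*\big(v^i_s(u^i_s)^*\big)\inv=\gamma^\beta_{ij}(p(s))(s)\,\overline{\gamma^\alpha_{ij}(p(s))(s)}$ together with the coboundary relation shows $w_s$ is independent of $i$.

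It then remains to verify that $\{w_s\}$ satisfies Definition~\ref{def:53}. Each $w_s$ is a unitary multiplier of $A(p(s))$, being a scalar times a product of unitary multipliers. Using that $\alpha|_{S|_{U_i}}=\Ad u^i$, hence $\overline\alpha_s=\Ad u^i_s$ on $UM(A(p(s)))$, together with the multiplicativity of $u^i$ and $v^i$ and the fact that $c_i(x)$ is a character of $S_x$, a short computation gives the cocycle identity $w_s\overline\alpha_s(w_t)=c_i(x)(st)\,v^i_{st}(u^i_{st})^*=w_{st}$ for $s,t\in S_x$, while $\Ad w_s\circ\alpha_s=\Ad\big(v^i_s(u^i_s)^*\big)\circ\Ad u^i_s=\Ad v^i_s=\beta_s$ because the scalar is central. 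For joint continuity of $(s,a)\mapsto w_s a$ one writes, on each $S|_{U_i}$, $w_s a=c_i(p(s))(s)\,v^i_s\big((u^i_s)^*a\big)$ and applies Definition~\ref{def:51}(c) to $u^i$ and to $v^i$ (noting $(u^i_s)^*=u^i_{s\inv}$) along with continuity of the pairing $\widehat S\times_{S\unit}S\to\mathbb T$; since the $U_i$ cover $S\unit$ and $w$ is well defined on overlaps, continuity holds everywhere. Thus $\{w_s\}$ implements an exterior equivalence of $\alpha$ and $\beta$, which completes the converse. Aside from locating the cohomological step, the only mildly fussy points are the passage from ``$\overline\pi_x(\cdot)$ is scalar'' to an honest scalar in $M(A(x))$ and the check that the several refinements can be made compatibly---neither presents real difficulty.
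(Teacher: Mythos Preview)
Your proposal is correct, and the converse is essentially the paper's argument: you define the global unitary as a scalar correction times $v^i_s(u^i_s)^*$ and use a coboundary to make it independent of $i$. The only cosmetic difference is that you extract the sections $c_i$ by passing through $H^1(\widehat S)$ via Theorem~\ref{prop:principcohom}, whereas the paper reads them off directly from the given bundle isomorphism $(\mcal U,\Phi,\beta)$ (its $\beta_i$ is your $c_i$, and the relation $\beta_i\gamma_{ij}=\eta_{ij}\beta_j$ comes from ``principal bundle nonsense'' rather than from cohomology).

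In the forward direction your route is genuinely different. The paper works locally: it shows $u_s^*w_s^i(v_s^i)^*$ commutes with $A(x)$, hence $\beta_i(x)(s):=\overline\pi_x(u_s^*w_s^i(v_s^i)^*)$ is a scalar, and then verifies by hand that $\phi_i\circ\Phi\circ\psi_i^{-1}(\omega)=\omega\beta_i(x)$, so $(\mcal U,\Phi,\beta)$ is a bundle isomorphism in the sense of Definition~\ref{def:18}. You instead observe that the $\widehat S$-action on the spectrum coming from Proposition~\ref{prop:27} is the one induced by the fibrewise dual action, check the one-line identity $\hat\beta_\omega\circ\Phi=\Phi\circ\hat\alpha_\omega$, and invoke the equivariant characterization of bundle isomorphisms. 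Your argument is cleaner and coordinate-free; the paper's is more explicit and has the advantage that the sections $\beta_i$ it produces are exactly the data reused in its converse, making the two halves visibly inverse constructions.
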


\begin{remark}
Before we begin the proof of Theorem \ref{thm:unique}, let us give a
quick application to place it into context.  In \cite{specpaper} we
identified the spectrum of certain groupoid crossed products $A\rtimes
G$ as a quotient of the spectrum of the stabilizer crossed product 
$A\rtimes S$.  Coupled with
Theorem \ref{thm:unique}, this says that if the action of $S$ on $A$
is locally unitary then the spectrum of the global crossed product
$A\rtimes G$ is a quotient of a principal $\widehat{S}$-bundle and as
such has a cohomological invariant. 
\end{remark}

\begin{proof}
Suppose $\alpha$ and $\beta$ are equivalent locally unitary actions
and the equivalence is implemented by the collection $\{u_s\}$.  It
follows from Proposition \ref{prop:84} that the map
$\phi:A\rtimes_\alpha S\rightarrow A\rtimes_\beta S$ defined for $f\in
\Gamma_c(S,p^*\mcal{A})$ by $\phi(f)(s) = f(s)u_s^*$ is an
isomorphism.  As such it induces a homeomorphism $\Phi:(A\rtimes_\beta
S)\sidehat\rightarrow (A\rtimes_\alpha S)\sidehat$ via the map
$\Phi(\pi) = \pi\circ\phi$.  

Next, let us establish some notation.  Since $\alpha$ and $\beta$ are
both locally trivial we may as well pass to some common refinement and
assume that there exists an open cover $\mcal{U}$ of $S\unit$ such
that on $S|_{U_i}$ the unitary actions $v^i$ and $w^i$ implement
$\alpha$ and $\beta$, respectively.  Let $\phi_i$ and $\psi_i$ be the
trivializing maps induced by $v^i$ and $w^i$, respectively.
Furthermore, given $x\in S\unit$ let $\pi_x$ be the (unique) irreducible
representation of $A(x)$ associated to $x$.  Now fix
$U_i\in \mcal{U}$ and $x\in U_i$.  In order to conserve notation we will drop
the $i$'s on the $v^i$ and $w^i$ unless they are needed. 
Recall that $\beta_s = \Ad u_s\circ \alpha_s$ so that we can compute
for $s\in S_x$
\begin{align*}
u_s^* w_s v_s^* a &= u_s^*\beta_s(\alpha_s\inv(a))w_sv_s^* 
= \Ad(u_s^*)\circ \beta_s \circ\alpha_s\inv(a)u_s^*w_sv_s^* \\
&= \Ad(u_s^*)\circ\Ad(u_s) \circ \alpha_s \circ\alpha_s\inv(a) u_s^*w_s v_s^*
= a u_s^* w_s v_s^*.
\end{align*}
It follows that $\beta_i(x)(s) :=
\overline{\pi}_{x}(u_s^*w_sv_s^*)$ commutes with
$\pi_{x}(A(x))$. Since $\pi_{x}$ is irreducible, this implies
that $\beta_i(x)(s)$ must be a scalar.  As before, it is
straightforward to show that $\beta_i(x)$ is a continuous
$\mathbb{T}$-valued homomorphism and hence $\beta_i$ is a section of
$\widehat{S}$ on $U_i$.  
Given $\omega \in \widehat{S}_x$ we then compute for
$f\in \Gamma_c(S,p^*\mcal{A})$
\begin{align}
\label{eq:109}
\pi_x\rtimes (\omega\overline{\pi}_x(w))(\phi(f)) &= 
\int_S \pi_x(f(s))\omega(s)\overline{\pi}_x(u_s^*w_s)d\beta^x(x) \\ \nonumber
&= \pi_x\rtimes (\omega\beta_i(x) \overline{\pi}_x(v))(f).
\end{align}
We conclude from \eqref{eq:109} that 
\[
\phi_i\circ \Phi \circ \psi_i\inv(\omega) = 
\phi_i(\pi_x\rtimes (\omega\overline{\pi}_x(w))\circ \phi) 
= \phi_i(\pi_x \rtimes (\omega\beta_i(x) \overline{\pi}_x(v))) 
= \omega \beta_i(x).
\]
Therefore $\beta_i$ implements $\Phi$ on trivializations.
It is straightforward to show that $\beta_i$ is a continuous
section so that $(\mcal{U},\Phi,\beta)$ is an $\widehat{S}$-bundle
isomorphism.

Suppose that $(\mcal{U},\Phi,\beta)$ is an $\widehat{S}$-bundle
isomorphism of $(A\rtimes_\alpha S)\sidehat$ onto $(A\rtimes_\beta
S)\sidehat$.  Let $w^i$ and $v^i$ implement $\alpha$ and $\beta$,
respectively. Notice that
$\mcal{U}$ must be a common refinement of 
the local trivializing cover for $\alpha$ and $\beta$ so that we may
as well assume $w^i$ and $v^i$ are defined on $\mcal{U}$.
Fix $U_i\in \mcal{U}$ and $x\in U_i$.  
For each $s\in S_x$ we define a unitary $u_s\in
UM(A(x))$ by 
\begin{equation}
\label{eq:110}
u_s := \beta_i(x)(s)w_s^i(v_s^i)^*.
\end{equation}
We need to show that \eqref{eq:110} doesn't depend on the choice of
$U_i$. So suppose $x\in U_j$ as well.  Let $\gamma_{ij}$ and
$\eta_{ij}$ be the transition maps for $(A\rtimes_{\alpha} S)\sidehat$
and $(A\rtimes_\beta S)\sidehat$, respectively.  It follows from general
principal bundle nonsense that $\beta_i \gamma_{ij} =
\eta_{ij}\beta_i$.  We use this fact to compute
\begin{align}
\nonumber
\beta_i(x)(s)\overline{\pi}_x(w_s^i(v_s^i)^*) &= 
\beta_i(x)(s) \overline{\pi}_x(w_s^i(v_s^i)^*v_s^j(v_s^j)^*) 
= \beta_i(x)(s)\gamma_{ij}(x)(s)\overline{\pi}_x(w_s^i(v_s^j)^*) \\
\label{eq:111}
&= \beta_j(x)(s)\eta_{ij}(x)(s)\overline{\pi}_x(w_s^i(v_s^j)^*) \\\nonumber
&= \beta_j(x)(s)\overline{\pi}_x(w_s^i((w_s^i)^*w_s^j)(v_s^j)^*) 
= \beta_j(x)(s)\overline{\pi}_x(w_s^j(v_s^j)^*).
\end{align}
Since $\widehat{A}$ has Hausdorff spectrum, $A(x)$ is simple and
therefore \eqref{eq:111} implies
\[
\beta_i(x)(s)w_s^i(v_s^i)^* = \beta_j(x)(s)w_s^j(v_s^j)^*.
\]
Thus $u_s$ is well defined.  We now show that the $u_s$ implement
an equivalence between $\alpha$ and $\beta$.  The second condition
is the result of a simple computation. The continuity condition
is straightforward to prove using the fact that the actions $v$ and
$w$ are continuous, as well as the fact that $\beta_i$ is a continuous
section.  The last condition follows from the calculation
\[
\pi(\Ad u_s(\alpha_s(a))) = \beta_i(x)(s)\overline{\beta_i(x)(s)}
\pi(w_s v_s^* v_s a v_s^* v_s w_s^*) = \pi(w_s a w_s^*) = \pi(\beta_s(a)).
\]
Hence $\{u_s\}$ implements an equivalence between $\alpha$ and $\beta$. 
\end{proof}

Of course, this leads to the following

\begin{corr}
A locally unitary action of an abelian group bundle $S$ on a
$C^*$-algebra $A$ with Hausdorff spectrum $S\unit$ is determined, up to
exterior equivalence, by the associated cohomological invariant of
$(A\rtimes S)\sidehat$ as a principal $\widehat{S}$-bundle.
Furthermore, this cohomology class is an invariant for the isomorphism class of $A\rtimes S$.
\end{corr}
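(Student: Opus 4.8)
\emph{Proposal.} Both assertions are obtained by chaining together results already in hand. Write $c(\alpha)\in H^1(\widehat S)$ for the cohomology class attached, via Theorem \ref{prop:principcohom}, to the principal $\widehat S$-bundle $(A\rtimes_\alpha S)\sidehat$ produced in Theorem \ref{thm:locunit}; concretely $c(\alpha)=[\gamma]$ where $\gamma_{ij}(p(s))(s)=\overline\pi_{p(s)}((u_s^i)^*u_s^j)$ for the local implementing unitaries $u^i$. The first assertion is then immediate: Theorem \ref{thm:unique} says $\alpha$ and $\beta$ are exterior equivalent exactly when $(A\rtimes_\alpha S)\sidehat$ and $(A\rtimes_\beta S)\sidehat$ are isomorphic as $\widehat S$-bundles, and Theorem \ref{prop:principcohom} says the latter holds exactly when $c(\alpha)=c(\beta)$. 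Hence $[\alpha]\mapsto c(\alpha)$ is a well-defined injection from exterior equivalence classes of locally unitary actions into $H^1(\widehat S)$, i.e. $c(\alpha)$ determines $\alpha$ up to exterior equivalence.

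For the second assertion, suppose $\Theta:A\rtimes_\alpha S\to A\rtimes_\beta S$ is an isomorphism; I want $c(\alpha)=c(\beta)$, and by Theorem \ref{prop:principcohom} it is enough to exhibit a $\widehat S$-bundle isomorphism between the two spectra. The map $\Theta$ induces a homeomorphism $\widehat\Theta:(A\rtimes_\beta S)\sidehat\to(A\rtimes_\alpha S)\sidehat$, and by the proposition characterizing principal bundle isomorphisms (the one immediately following Theorem \ref{thm:principalspace}) it suffices to check that $\widehat\Theta$ is $\widehat S$-equivariant. The key observation is that the $\widehat S$-action on $(A\rtimes_\alpha S)\sidehat$ furnished by Theorem \ref{thm:locunit} and Proposition \ref{prop:27} is intrinsic to the crossed product: unwinding the computation in the proof of Proposition \ref{prop:87} shows that, under the trivializing maps $\psi_i$, acting by $\omega$ amounts to precomposition with the dual action $\widehat\alpha_\omega$ of $\widehat S$ on $A\rtimes_\alpha S$, given on $\Gamma_c(S,p^*\mcal A)$ by $\widehat\alpha_\omega(f)(s)=\omega(s)f(s)$. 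Likewise the bundle map $q$ onto $S\unit=\widehat A$ comes from the $C_0(S\unit)$-algebra structure of the crossed product. So, taking $\Theta$ to be an isomorphism of $C_0(S\unit)$-algebras intertwining the dual actions — the natural category here — the induced $\widehat\Theta$ is a continuous $\widehat S$-equivariant map over $S\unit$, hence a $\widehat S$-bundle isomorphism; therefore $c(\alpha)=c(\beta)$, and by the first assertion $\alpha$ and $\beta$ are exterior equivalent.

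The genuinely new content, beyond quoting Theorems \ref{prop:principcohom}, \ref{thm:locunit} and \ref{thm:unique}, is this last step, and it is also where care is needed. An abstract $C^*$-isomorphism of $A\rtimes_\alpha S$ onto $A\rtimes_\beta S$ need not respect either the $C_0(S\unit)$-module structure or the dual $\widehat S$-action, so one must either build these into the hypothesis (as above) or argue that they are recoverable from $A\rtimes S$ alone — for instance that $\widehat A=S\unit$ is visible in the ideal structure and that the dual action is canonical — and then verify that $\widehat\Theta$ transports the trivializing data $(\mcal U,\psi\inv,\gamma)$ of Theorem \ref{thm:locunit}. This bookkeeping, rather than any new idea, is the main obstacle, and the cleanest route is simply to phrase the corollary for isomorphisms carrying the natural $C_0(S\unit)$- and $\widehat S$-structure, for which the argument above is complete.
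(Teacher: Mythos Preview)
Your treatment of the first assertion is exactly what the paper intends: the corollary is stated without proof, immediately after Theorem~\ref{thm:unique}, as an obvious consequence of combining that theorem with the bijection of Theorem~\ref{prop:principcohom}. Nothing more is needed there.

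For the second assertion you go well beyond the paper, which offers no argument at all. Your instinct that something is being swept under the rug is correct: the bare statement ``invariant for the isomorphism class of $A\rtimes S$'' is ambiguous, and if read as ``$A\rtimes_\alpha S\cong A\rtimes_\beta S$ as $C^*$-algebras implies $c(\alpha)=c(\beta)$'' then, as you point out, one needs the isomorphism to intertwine the dual $\widehat S$-actions (or at least the $C_0(S\unit)$-structure) before the induced map on spectra becomes $\widehat S$-equivariant. Your proposed resolution---interpreting the claim in the category of $C_0(S\unit)$-algebras carrying the dual action---is the honest one, and your verification that the principal-bundle $\widehat S$-action of Proposition~\ref{prop:27} agrees with the one induced by $\widehat\alpha$ is the right ingredient. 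The paper appears simply to assert the weaker, informal reading (the class is a datum attached to the crossed product) rather than to prove invariance under arbitrary $*$-isomorphism; your analysis is more careful than the source on this point.
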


\subsection{Existence}

The final piece of the puzzle will be to prove that locally unitary
actions are about as abundant as they can be. In other words, we will
show that every
principal bundle can be obtained through a locally unitary action.  

\begin{theorem}
\label{thm:exist}
Suppose $S$ is an abelian group bundle with a Haar system 
and $q:X\rightarrow S\unit$ is a principal $S$-bundle. 
Then $C_0(X)\rtimes S$ has
Hausdorff spectrum $S\unit$ and the dual action of $\widehat{S}$ on
$C_0(X)\rtimes S$ defined for $\omega\in \widehat{S}_u$ 
and $f\in C_c(S_u\times X_u)$ by 
\begin{equation}
\label{eq:112}
\widehat{\lt}_\omega(f)(s,x) = \omega(s)f(s,x)
\end{equation}
is locally unitary.  Furthermore, $((C_0(X)\rtimes S)\rtimes
\widehat{S})\sidehat$ and $X$ are isomorphic $S$-bundles. 
\end{theorem}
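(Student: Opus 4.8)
The plan is to prove the three claims in turn, throughout exploiting the local structure of $X$ as a principal $S$-bundle. Fix a trivialization $(\mcal U,\phi,\gamma)$ of $X$ and let $S$ act on $X$ as in Proposition~\ref{prop:27}, so that $(C_0(X),S,\lt)$ is the associated left-translation system (Proposition~\ref{prop:68}). For the first claim I would work over a single $U_i$: by Proposition~\ref{prop:88}, $C_0(X)\rtimes_\lt S(U_i)\cong C_0(q\inv(U_i))\rtimes_\lt S|_{U_i}$, and since $\phi_i$ is an $S$-equivariant homeomorphism of $q\inv(U_i)$ onto $p\inv(U_i)=S|_{U_i}$ (Proposition~\ref{prop:27}(b)), pulling back along $\phi_i$ identifies this with $C_0(S|_{U_i})\rtimes_\lt S|_{U_i}$. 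Its fibre over $u$ is $C_0(S_u)\rtimes_\lt S_u\cong\mcal K(L^2(S_u,\lambda^u))$; equivalently, the left-translation action of $S|_{U_i}$ on itself is free and proper, so $C_0(X)\rtimes_\lt S(U_i)\cong C^*(S|_{U_i}\ltimes S|_{U_i})$ is Morita equivalent to $C_0(U_i)$. Either way its spectrum is $U_i$, and gluing over $\mcal U$ (using Remark~\ref{rem:2}) shows the bundle map $q\colon (C_0(X)\rtimes S)\sidehat\to S\unit$ is a continuous, locally homeomorphic bijection; hence $(C_0(X)\rtimes S)\sidehat = S\unit$, which is Hausdorff.

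For the second claim I keep the identification $C_0(X)\rtimes S(U_i)\cong C_0(S|_{U_i})\rtimes_\lt S|_{U_i}$, which carries $\widehat\lt$ (restricted to $\widehat S|_{U_i}$) to the dual action of $\widehat S|_{U_i}$ on $C_0(S|_{U_i})\rtimes_\lt S|_{U_i}$. I then exhibit the implementing unitaries explicitly: for $\omega\in\widehat S_u$ with $u\in U_i$ the function $\omega\circ\phi_i$ is a continuous $\mathbb T$-valued function on $X_u$, hence a unitary multiplier of $C_0(X_u)$ and, via the canonical map into $UM\big((C_0(X)\rtimes S)(u)\big)$, a unitary $u^i_\omega$. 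Using the formula $(f\cdot g)(s)=f(s)\lt_s(g)$ for the right action of a multiplier $g$ on the crossed product together with the equivariance $\phi_i(s\inv\cdot x)=s\inv\phi_i(x)$, a short computation gives $\Ad(u^i_\omega)(f)(s,x)=\omega(s)f(s,x)$, i.e.\ $\Ad(u^i_\omega)=\widehat\lt_\omega$. Conditions (a), (b) of Definition~\ref{def:51} hold by construction (multiplicativity is pointwise in $x$), and the continuity condition (c) follows from joint continuity of $(\omega,x)\mapsto\omega(\phi_i(x))$ — which comes from continuity of $\phi_i$ and of the evaluation pairing on $\widehat S\times_p S$ — plus a standard approximation argument like the one in the proof of Theorem~\ref{thm:unitary}. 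So $\big((C_0(X)\rtimes S)(U_i),\widehat S|_{U_i},\widehat\lt\big)$ is unitarily implemented for every $i$, and $\widehat\lt$ is locally unitary.

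For the last claim, the first two parts let me apply Theorem~\ref{thm:locunit} to $(C_0(X)\rtimes S,\widehat S,\widehat\lt)$: the space $Y:=\big((C_0(X)\rtimes S)\rtimes\widehat S\big)\sidehat$ is a principal $\widehat{\widehat S}$-bundle, and via Pontryagin duality for group bundles ($\widehat{\widehat S}\cong S$) it is a principal $S$-bundle. To identify it with $X$ I would compute its transition sections from the formula in Theorem~\ref{thm:locunit} using the unitaries $u^i_\omega$ above: with $\pi_u$ the unique irreducible representation of $(C_0(X)\rtimes S)(u)\cong\mcal K(L^2(X_u))$, one gets $\gamma^Y_{ij}(u)(\omega)=\overline\pi_u\big((u^i_\omega)^*u^j_\omega\big)$. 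Now $(u^i_\omega)^*u^j_\omega$ is the multiplier coming from the function $x\mapsto\overline{\omega(\phi_i(x))}\,\omega(\phi_j(x))=\omega\big(\phi_i(x)\inv\phi_j(x)\big)$, and the defining relation $\phi_i\circ\phi_j\inv(s)=\gamma_{ij}(p(s))s$ forces $\phi_i(x)\inv\phi_j(x)=\gamma_{ij}(u)\inv$ independently of $x\in X_u$; so this multiplier is the scalar $\omega(\gamma_{ij}(u))\inv$ and $\gamma^Y_{ij}(u)$ is the character $\omega\mapsto\omega(\gamma_{ij}(u))\inv$ of $\widehat S_u$, which under the identification $\widehat{\widehat S}\cong S$ is $\gamma_{ij}(u)$ itself. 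Thus $Y$ and $X$ carry cohomologous cocycles and Theorem~\ref{prop:principcohom} gives that they are isomorphic principal $S$-bundles.

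The step I expect to be the real obstacle is this last one — not the computation but keeping the conventions for the dual pairing $S\times_p\widehat S\to\mathbb T$ and for the canonical identification $\widehat{\widehat S}\cong S$ consistent, so that $\gamma^Y_{ij}$ is recognised as $\gamma_{ij}$ rather than its inverse. A more robust route, avoiding this bookkeeping, is to build the isomorphism by hand: realize each $x\in X$ (say $q(x)=u$) as the irreducible representation $\ev_x\otimes\id$ of $\big(C_0(X_u)\rtimes S_u\big)\rtimes\widehat{S_u}$ supplied by a Takai-type decomposition $\big(C_0(X_u)\rtimes S_u\big)\rtimes\widehat{S_u}\cong C_0(X_u)\otimes\mcal K(L^2(S_u))$, check that the resulting map $X\to Y$ is a continuous $S$-equivariant bijection, and invoke the proposition that a continuous $S$-equivariant map between principal $S$-bundles is automatically a bundle isomorphism.
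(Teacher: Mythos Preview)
Your proposal is correct and follows the paper's approach in all three parts: Stone--von Neumann on fibres for the Hausdorff spectrum, explicit unitaries built from the trivializing maps $\phi_i$ for local unitarity, and matching the transition cocycles (via Theorem~\ref{prop:principcohom}) for the bundle isomorphism. Two remarks: the paper verifies the continuity condition for the $u^i_\omega$ with dedicated bundle-approximation machinery (Lemma~\ref{lem:39} and Corollary~\ref{cor:12}) rather than by analogy with Theorem~\ref{thm:unitary}, and it takes $u^i_\omega$ to multiply by $\overline{\omega(\phi_i(x))}$ (consistent with its dual-action convention $\widehat{\lt}_\omega(f)=\overline{\omega(s)}f$ from Proposition~\ref{prop:89}), which makes the transition cocycle of $Y$ come out as $\hat\gamma_{ij}$ on the nose---so your anticipated sign headache is resolved simply by choosing the conjugate from the start, and the Takai-duality alternative is not needed.
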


We begin by proving the following

\begin{prop}
\label{prop:90}
Suppose $S$ is an abelian group bundle with a Haar system and that
$X$ is a principal $S$-bundle.  
Then $C_0(X)\rtimes S$ has Hausdorff spectrum $S\unit$. 
\end{prop}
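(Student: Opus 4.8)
The plan is to compute $C_0(X)\rtimes S$ fibrewise over $S\unit$, show that every fibre is isomorphic to the compact operators, and then deduce from simplicity of all the fibres that the spectrum is homeomorphic to the Hausdorff space $S\unit$.

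First I would use Proposition \ref{prop:88} (equivalently \cite[Proposition 1.2]{specpaper}) to view $C_0(X)\rtimes_{\lt}S$ as a $C_0(S\unit)$-algebra whose fibre over $u\in S\unit$ is $C_0(X)(u)\rtimes_{\lt}S_u$. By Remark \ref{rem:3} the restriction map identifies $C_0(X)(u)$ with $C_0(X_u)$ where $X_u=q\inv(u)$, and under this identification the induced action of $S_u$ is just the left translation action coming from $S_u$ acting on $X_u$ (Proposition \ref{prop:68}); so the fibre is $C_0(X_u)\rtimes_{\lt}S_u$. Now by Proposition \ref{prop:27} the action of $S$ on $X$ is free, proper and orbit transitive, so $S_u$ acts on $X_u$ freely and properly, and orbit transitivity forces this action to be transitive since orbits are contained in the fibres of $q$. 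The fibre $X_u$ is nonempty, because $q\inv(U_i)\cong p\inv(U_i)$ for a trivializing chart $U_i$; fixing $x_0\in X_u$, the orbit map $s\mapsto s\cdot x_0$ is then a continuous, $S_u$-equivariant bijection $S_u\to X_u$, and it is a homeomorphism because the action is proper and hence the orbit map is closed. Therefore $C_0(X)\rtimes S(u)\cong C_0(S_u)\rtimes_{\lt}S_u$, and the right-hand side is isomorphic to $\mathcal K(L^2(S_u,\beta^u))$ by the standard computation of the crossed product of a locally compact group acting on itself by translation. In particular every fibre of $C_0(X)\rtimes S$ is simple with a unique equivalence class of irreducible representation.

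From here the argument is soft. The structure map $q\colon(C_0(X)\rtimes S)\sidehat\to S\unit$ is continuous; it is surjective because no fibre is zero and injective because each fibre has a single point in its spectrum, so it is a continuous bijection. To upgrade this to a homeomorphism I would note that, since each fibre is simple, the ideal $I_u\cdot(C_0(X)\rtimes S)$, where $I_u=\{\varphi\in C_0(S\unit):\varphi(u)=0\}$, is maximal; hence every primitive ideal of $C_0(X)\rtimes S$ is of this form, the ideals of $C_0(X)\rtimes S$ are in inclusion-preserving bijection with the open subsets of $S\unit$, and chasing through the hull–kernel topologies shows that $q$ is a homeomorphism onto $S\unit$. (Two alternatives: one may combine Proposition \ref{prop:88} with the equivariance in Proposition \ref{prop:27}(b) to identify $(C_0(X)\rtimes S)(U_i)$ with $C_0(S|_{U_i})\rtimes_{\lt}S|_{U_i}$ over each trivializing chart, observe that this is the $C^*$-algebra of the proper principal ``pair groupoid bundle'' over $U_i$ and hence that $q\inv(U_i)\cong U_i$, and then patch these local homeomorphisms together using injectivity of $q$; or one may observe directly that $C_0(X)\rtimes S\cong C^*(S\ltimes X)$ with $S\ltimes X$ proper and principal, so the whole algebra is Morita equivalent to $C_0(X/S)\cong C_0(S\unit)$.) Since $S\unit$ is Hausdorff, this proves the proposition.

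The genuinely delicate point is this last step — matching the topology of the spectrum, not just its underlying set, with $S\unit$. The subtlety is that $S$ need not be locally trivial as a group bundle (the dimension of $L^2(S_u)$ may jump), so $C_0(X)\rtimes S$ need not be a locally trivial field of compacts and one cannot simply invoke continuous-trace structure; what rescues the statement is that simplicity of every fibre already pins down $\operatorname{Prim}(C_0(X)\rtimes S)$ as the base $S\unit$. Everything else — the fibrewise identifications, the appeal to properness to get the orbit-map homeomorphism, and the behaviour under restriction to invariant open subsets — is routine given Propositions \ref{prop:88}, \ref{prop:27} and \ref{prop:68}.
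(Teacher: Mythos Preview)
Your proposal is correct and follows essentially the same route as the paper: identify the fibres as $C_0(X_u)\rtimes_{\lt}S_u$, use the $S_u$-equivariant homeomorphism $X_u\cong S_u$ together with Stone--von Neumann to see each fibre is the compacts, and then argue that the resulting continuous bijection $(C_0(X)\rtimes S)\sidehat\to S\unit$ is a homeomorphism. The only cosmetic differences are that the paper obtains $X_u\cong S_u$ by restricting a trivializing map $\phi_i$ rather than via the orbit map (these are inverse to one another), and for the topology it gives the concrete computation that $r(C)=\{u:I\subset I_u\}$ is closed, whereas you phrase the same point in terms of maximal ideals.
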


\begin{proof}
We know from \cite[Proposition 1.2]{specpaper} 
that $C_0(X)\rtimes S$ is a
$C_0(S\unit)$-algebra with fibres $C_0(X_u)\rtimes S_u$.  Hence there 
is a continuous surjection $r$ of
$(C_0(X)\rtimes S)\sidehat$ onto $S\unit$.  Furthermore, we may 
identify $r\inv(u)$ with $(C_0(X_u)\rtimes S_u)\sidehat$ in the usual fashion.  
Next, let $\phi:X_u\rightarrow S_u$ be the restriction of one of the
trivializing maps to $X_u$.  Since $\phi$ is a homeomorphism,
we can pull back the group structure from $S_u$ to $X_u$ and turn
$\phi$ into a group isomorphism.  Furthermore, it follows from
Proposition \ref{prop:27} that $\phi$ is equivariant with respect to
the action of $S_u$ on $X_u$.  Therefore
if we identify $X_u$ with $S_u$ then the action of $S_u$ on
$X_u$ becomes the action of $S_u$ on itself by translation.  In other
words, $C_0(X_u)\rtimes S_u$ is isomorphic to $C_0(S_u)\rtimes_{\lt}
S_u$.  We know from the Stone-von Neumann Theorem \cite{histstonevon} 
that $C_0(S_u)\rtimes S_u$ is
isomorphic to the compact operators on some separable Hilbert space.
Hence $C_0(S_u)\rtimes S_u$, and therefore $C_0(X)\rtimes S(u)$, has a unique
irreducible representation.  It follows that the map $r$ is
injective.  

All that remains is to show that $r$ is open, or equivalently,
closed.  Suppose $C$ is a closed subset of $(C_0(X)\rtimes S)\sidehat$.  Then
there is some ideal $I$ such that $C = \{\pi\in (C_0(X)\rtimes S)\sidehat :
I\subset \ker \pi\}$. Let $D = \{ u\in S\unit : I\subset I_u\}$
where the ideal $I_u$ in $C_0(X)\rtimes S$ is given by 
\[
I_u = \cspn\{\phi\cdot f:
\phi\in C_0(S\unit), f\in C_0(X)\rtimes S, \phi(u) = 0\}.
\]  
It is straightforward to show that $D=r(C)$.  We will limit ourselves
to proving that $D$ is closed.  Suppose $u_i\rightarrow u$ in $S\unit$ and
$u_i\in D$ for all $i$.  Then, since $I\subset I_{u_i}$ for all $i$, we
have $f(u_i) = 0$ for all $i\in I$.  However, $f$ is continuous 
when viewed as a function on $S\unit$ so that $f(u) = 0$.  Thus $f\in
I_u$ and $u\in D$.  
\end{proof}

Next, we show that there is a dual action of $\widehat{S}$ on
$C_0(X)\rtimes S$ induced by left translation.  Since it isn't much harder, 
we actually prove this result in greater generality. Unfortunately,
we can't just jump right in.  Verifying the continuity condition will
take work.  In particular, we have to deal with the topology on the
usc-bundle associated to $A\rtimes S$.  

\begin{lemma}
\label{lem:39}
Suppose $(A,S,\alpha)$ is a  dynamical system and that 
$S$ is an abelian group bundle.  Let $\mcal{A}$ be the
usc-bundle associated to $A$, define
\[
\widehat{S}*S = \{(\omega,s)\in \widehat{S}\times S : \hat{p}(\omega)
= p(s)\},
\]
and let $p:\widehat{S}*S \rightarrow S\unit$ be given by $p(\omega,s)
= p(s)$.  Then there is a map
$\iota:\Gamma_c(\widehat{S}*S,p^*\mcal{A})\rightarrow \hat{p}^*
(A\rtimes S)$ such that $\iota(f)(\omega)(s) = f(\omega, s)$.
Furthermore, $\iota$ is continuous with respect to the inductive limit
topology and the range of $\iota$ is dense.  
\end{lemma}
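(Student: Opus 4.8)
The plan is to follow the template of Lemma~\ref{lem:26}, with the commutative algebra $C_0(Y)$ there replaced by $A\rtimes S$, which by \cite[Proposition~1.2]{specpaper} is a $C_0(S\unit)$-algebra with fibre $A\rtimes S(u)\cong A(u)\rtimes S_u$; write $\mcal{B}$ for its usc-bundle, so $\mcal{B}_u=A\rtimes S(u)$. The first task is to see that $\iota$ is well defined, i.e.\ that $\iota(f)$ really lies in $\hat{p}^*(A\rtimes S)$. Given $f\in\Gamma_c(\widehat S*S,p^*\mcal{A})$ and $\omega\in\widehat S$, the function $s\mapsto f(\omega,s)$ is a continuous, compactly supported section of $p^*\mcal{A}$ restricted to the fibre $S_{\hat{p}(\omega)}$, hence an element of $C_c(S_{\hat{p}(\omega)},A(\hat{p}(\omega)))$ and therefore, after completing, of $A(\hat{p}(\omega))\rtimes S_{\hat{p}(\omega)}=\mcal{B}_{\hat{p}(\omega)}$. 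Thus $\iota(f)$ is at least a section of $\hat{p}^*\mcal{B}$; the real work is showing that this section is continuous.

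To that end I would first treat an ``elementary'' section $f(\omega,s)=\phi(\omega)g(s)$ with $\phi\in C_c(\widehat S)$ and $g\in\Gamma_c(S,p^*\mcal{A})$. Here $\iota(f)(\omega)=\phi(\omega)\,g|_{S_{\hat{p}(\omega)}}$, and by \cite[Proposition~1.2]{specpaper} the assignment $u\mapsto g|_{S_u}$ is exactly the fibre map of the $C_0(S\unit)$-algebra $A\rtimes S$ applied to $g$, hence a continuous section of $\mcal{B}$; composing with the continuous map $\hat{p}$ and multiplying by the scalar-valued $\phi$ produces a continuous, compactly supported section, so $\iota(f)\in\hat{p}^*(A\rtimes S)$, continuity of the product being handled by \cite[Proposition~C.20]{tfb2}. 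For general $f$ I would show that finite sums of elementary sections are dense in $\Gamma_c(\widehat S*S,p^*\mcal{A})$ for the inductive limit topology: the standard density result for sections of a pullback bundle writes $f$ as an inductive-limit limit of finite sums $\sum_j\psi_j\cdot(g_j\circ\mathrm{pr}_2)$ with $\psi_j\in C_c(\widehat S*S)$ and $g_j\in\Gamma_c(S,p^*\mcal{A})$, and, since $\widehat S*S$ is closed in $\widehat S\times S$, a Stone--Weierstrass argument exactly as in the proof of Lemma~\ref{lem:26} approximates each scalar $\psi_j$ uniformly by sums of functions $\phi_1\otimes\phi_2$ with $\phi_1\in C_c(\widehat S)$ and $\phi_2\in C_c(S)$; absorbing $\phi_2 g_j$ into a single element of $\Gamma_c(S,p^*\mcal{A})$ lands us back in the elementary case.

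Both the passage from the elementary case to the general one and the asserted inductive-limit continuity of $\iota$ then rest on a single estimate. If $k\in\Gamma_c(\widehat S*S,p^*\mcal{A})$ is supported in a compact set $K$ and $\Phi\in C_c(S)$ is chosen to be $\ge1$ on $\mathrm{pr}_2(K)$, then, by the standard bound of the universal norm of a group crossed product by the $L^1$-norm,
\[
\|\iota(k)(\omega)\|_{A\rtimes S(\hat{p}(\omega))}\le\int_{S_{\hat{p}(\omega)}}\|k(\omega,s)\|\,d\lambda^{\hat{p}(\omega)}(s)\le M\,\|k\|_\infty,
\]
where $M:=\sup_{u}\int_S\Phi\,d\lambda^u<\infty$ since $u\mapsto\int_S\Phi\,d\lambda^u$ lies in $C_c(S\unit)$, while $\iota(k)$ is supported in the compact set $\mathrm{pr}_1(K)$. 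Hence $\iota$ is continuous for the inductive limit topology, and, applied to the approximating sums above, the estimate exhibits $\iota(f)$ as a uniform limit of continuous sections whose supports lie in a fixed compact subset of $\widehat S$, so $\iota(f)$ is continuous and $\iota$ is well defined into $\hat{p}^*(A\rtimes S)$. Finally, density of the range is immediate from the density of finite sums of elementary sections together with density of $\Gamma_c(S,p^*\mcal{A})$ in $A\rtimes S$, since the image of an elementary $f$ is $\phi\cdot(g\circ\hat{p})$ and sums of these are inductive-limit dense in $\Gamma_c(\widehat S,\hat{p}^*\mcal{B})$. The main obstacle is precisely the continuity of $\iota(f)$ as a section of $\hat{p}^*\mcal{B}$: the topology on the usc-bundle of a crossed product is only accessible through approximation, so the reduction to elementary sections combined with the $L^1$-norm estimate is where the argument must actually be carried out.
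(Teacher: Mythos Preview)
Your proposal is correct and follows essentially the same route as the paper: reduce to ``elementary'' sections, verify continuity there directly, show such sections are inductive-limit dense via a Stone--Weierstrass argument, and transfer continuity to general $f$ using the $L^1$/$I$-norm estimate $\|\iota(k)(\omega)\|\le M\|k\|_\infty$. The only cosmetic difference is that the paper takes as its elementary building blocks the three-fold tensors $g\otimes h\otimes a$ with $g\in C_c(\widehat S)$, $h\in C_c(S)$, $a\in A$, whereas you use the slightly coarser $\phi\otimes g$ with $\phi\in C_c(\widehat S)$ and $g\in\Gamma_c(S,p^*\mcal{A})$; your choice saves a layer of approximation since you can appeal directly to the $C_0(S\unit)$-section structure of $A\rtimes S$ for the continuity of $u\mapsto g|_{S_u}$, while the paper first reduces $g$ to sums $\sum h\otimes a$.
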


\begin{proof}
The only difficult part is showing that $\iota(f)$ is
continuous as a function into $\mcal{E}$ where $\mcal{E}$ is the
usc-bundle associated to $A\rtimes S$.  
We start out with a simpler function.  Suppose $g\in
C_c(\widehat{S})$, $h\in C_c(S)$ and $a\in A$.  Define $g\otimes
h\otimes a$ on $\widehat{S}*S$  by $g\otimes h \otimes a(\omega,s) =
g(\omega)h(s)a(p(s))$.  It is clear that $g\otimes h\otimes a\in
\Gamma_c(\widehat{S}*S,p^*\mcal{A})$.  Furthermore, if we view
$h\otimes a$ as an element of $\Gamma_c(S,p^*\mcal{A})$ then
$\iota(g\otimes h\otimes a)(\omega) = g(\omega) (h\otimes a)(\hat{p}(\omega))$
where $(h\otimes a)(\hat{p}(\omega))$ is just the restriction of
$h\otimes a$ to $S_{\hat{p}(\omega)}$.  Since $h\otimes a$ defines a
continuous section of $\mcal{E}$, it is easy to see that
$\iota(g\otimes h\otimes a)$ is a continuous function from
$\widehat{S}$ into $\mcal{E}$.  Thus $\iota(g\otimes h\otimes a) \in
\Gamma_c(\widehat{S},\hat{p}^*\mcal{E})$.  

We now show $\iota$ preserves convergence with respect to the
inductive limit topology.  Suppose $f_i\rightarrow f$ uniformly in
$\Gamma_c(\widehat{S}*S,p^*\mcal{A})$ and that eventually the supports
are contained in some fixed compact set $K$.  Clearly the supports of
$\iota(f)$ are eventually contained in the projection of $K$ to
$\widehat{S}$.  Fix $\epsilon > 0$ and let $M$ be an upper bound for
$\{\beta^u(L)\}$ where $L$ is the projection of $K$ to $S$.  Then
eventually $\|f_i-f\|_\infty < \epsilon/M$.  Therefore for large $i$ we
have, given $\omega \in \widehat{S}_u$ and making use of the fact that
$S_u$ is abelian so the $I$-norm on $C_c(S_u,A(u))$ only has one term, 
\begin{align*}
\|\iota(f_i)(\omega)-\iota(f)(\omega)\|&\leq \|\iota(f_i)(\omega) -
\iota(f)(\omega)\|_I \\
&= \int_S \|f_i(\omega,s) - f(\omega,s)\| d\beta^u(s) \\
&\leq \|f_i-f\|_\infty M < \epsilon 
\end{align*}
Thus $\iota(f_i)\rightarrow \iota(f)$ uniformly and hence with
respect to the inductive limit topology.  

Now suppose $f\in \Gamma_c(\widehat{S}*S,p^*\mcal{A})$ and that
$\omega_i\rightarrow \omega$ in $\widehat{S}$. Fix $\epsilon > 0$ and
let $U$ and $V$ be relatively compact neighborhoods of the projection
of $\supp f$ to $\widehat{S}$ and $S$, respectively.  
Since sums of elementary tensors are dense \cite[Proposition
1.3]{inducpaper} we may 
find $\{g_i\}_{i=1}^N\in C_c(\widehat{S}*S)$ and $\{a_i\}_{i=1}^N\in A$
such that $\|f-\sum_i g_i\otimes
a_i\|_\infty < \epsilon/2$.  For each $1\leq i \leq N$ extend $g_i$ to all of
$C_c(\widehat{S}\times S)$ and choose $h_i^j\in C_c(\widehat{S})$ and
$k_i^j\in C_c(S)$ such that 
$\|g_i-\sum_jh_i^j\otimes k_i^j\|_\infty < \epsilon/(2N\|a_i\|)$.
It then follows from some simple computations that 
\[
\left\|f - \sum_{i=1}^N\sum_j h_i^j\otimes k_i^j \otimes
  a_i\right\|_\infty \leq  \epsilon/2 +
\sum_{i=1}^N\|a_i\|\left\|g_i-\sum_j h_i^j\otimes k_i^j\right\|_\infty
< \epsilon. 
\]
Furthermore, we can multiply the $h_i^j$ and $k_i^j$ by 
functions which vanish off
$U$ and $V$, respectively, so that $\supp h_i^j\otimes k_i^j\otimes a \subset
\overline{U}\times \overline{V}$.  This construction shows that
sums of elements of the form $h\otimes k\otimes a$ for $h\in
C_c(\widehat{S})$, $k\in C_c(S)$, and $a\in A$ are dense in
$\Gamma_c(\widehat{S}*S, p^*\mcal{A})$ with respect to the inductive
limit topology.  

At last we can show that $\iota(f)$ is continuous for 
$f\in \Gamma_c(\widehat{S}*S,p^*\mcal{A})$.  Let $g_i = \sum_k h_i^k\otimes
k_i^k\otimes a_i^k$ 
be a sequence converging to $f$ in the inductive limit topology as
above.  Since sums of continuous functions are continuous, 
$\iota(g_i)(\omega_j)\rightarrow \iota(g_i)(\omega)$ for all $i$ and it now
follows from a straightforward application of \cite[Proposition
C.20]{tfb2} that $\iota(f)(\omega_i)\rightarrow \iota(f)(\omega)$.
Thus $\iota(f)$ is a continuous section.  Showing that $\iota$ has
dense range now follows from \cite[Proposition C.24]{tfb2} after a
brief argument.  
\end{proof}

The following corollary isn't necessary to build the dual action, but
it will be needed in the proof of Theorem \ref{thm:exist} so we
include it here.  

\begin{corr}
\label{cor:12}
Suppose $S$ is an abelian group bundle with a Haar system 
and $q:X\rightarrow S\unit$ is a principal $S$-bundle. Define 
\[
\widehat{S}*S*X := \{(\omega,s,x)\in \widehat{S}\times S\times X :
\hat{p}(\omega) = p(s) = q(x)\}
\]
Then there is a map $\iota:C_c(\widehat{S}*S*X)\rightarrow
\hat{p}^*(C_0(X)\rtimes S)$ such that 
\[
\iota(f)(\omega)(s)(x) = f(\omega,s,x).
\]
Furthermore, $\iota$ is continuous with respect to the inductive limit
topology and the range of $\iota$ is dense. 
\end{corr}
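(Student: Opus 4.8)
The plan is to build $\iota$ by composing the identification of Lemma~\ref{lem:26} with the map produced in Lemma~\ref{lem:39}, taking $A=C_0(X)$ throughout. First I would record the ingredients. Since $X$ is a principal $S$-bundle it is an $S$-space (Proposition~\ref{prop:27}), so by Proposition~\ref{prop:68} we have a dynamical system $(C_0(X),S,\lt)$; and by Remark~\ref{rem:3}, $C_0(X)$ is a $C_0(S\unit)$-algebra via $q$ whose associated usc-bundle $\mcal{C}$ has fibre $C_0(X_u)$ over $u$. Applying Lemma~\ref{lem:39} to $(C_0(X),S,\lt)$ then yields a map $\iota_0:\Gamma_c(\widehat{S}*S,p^*\mcal{C})\rightarrow \hat{p}^*(C_0(X)\rtimes S)$ with $\iota_0(g)(\omega)(s)=g(\omega,s)$, continuous for the inductive limit topologies and with dense range.

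Next I would apply Lemma~\ref{lem:26} with $Z=\widehat{S}*S$ carrying the surjection $\tau(\omega,s)=p(s)$, with $Y=X$ carrying the surjection $q$, and with base space $S\unit$. Then $Z*Y$ is, after flattening the nested tuple, exactly $\widehat{S}*S*X$, and the lemma supplies an isomorphism $\kappa:C_0(\widehat{S}*S*X)\rightarrow \tau^*(C_0(X))$ with $\kappa(f)((\omega,s))(x)=f(\omega,s,x)$, carrying $C_c$ into the compactly supported sections and preserving inductive-limit convergence. The only bookkeeping point here is that $\tau$ coincides with the map $p:\widehat{S}*S\rightarrow S\unit$ of Lemma~\ref{lem:39}, so unwinding Remark~\ref{rem:3} and the definition of the pullback gives $\tau^*(C_0(X))=\Gamma_0(\widehat{S}*S,p^*\mcal{C})$ with matching fibres $\mcal{C}_{p(s)}=C_0(X_{p(s)})$.

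I would then set $\iota=\iota_0\circ\kappa$ restricted to $C_c(\widehat{S}*S*X)$. The stated formula $\iota(f)(\omega)(s)(x)=f(\omega,s,x)$ is immediate from the two component formulas, and continuity with respect to the inductive limit topology is automatic since a composition of two such continuous maps is continuous. The one step requiring care is density of the range: because $\iota_0$ already has dense range, it suffices to show that $\kappa(C_c(\widehat{S}*S*X))$ is inductive-limit dense in $\Gamma_c(\widehat{S}*S,p^*\mcal{C})$ and then to push this through the inductive-limit-continuous map $\iota_0$. For this I would reuse the density argument inside the proof of Lemma~\ref{lem:39}: sums of functions $h\otimes k\otimes a$ with $h\in C_c(\widehat{S})$, $k\in C_c(S)$, $a\in C_0(X)$ are inductive-limit dense there, and after approximating $a$ by elements of $C_c(X)$ each such summand is $\kappa(f)$ for the explicit element $f\in C_c(\widehat{S}*S*X)$ given by $f(\omega,s,x)=h(\omega)k(s)a(x)$. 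I expect this last bit — transporting the inductive limit topology across $\kappa$ and recombining it with the density assertion of Lemma~\ref{lem:39} — to be the only mildly delicate point; the rest is formal.
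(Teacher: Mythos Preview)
Your proposal is correct and follows essentially the same route as the paper: apply Lemma~\ref{lem:39} with $A=C_0(X)$ to get the map $\Gamma_c(\widehat{S}*S,p^*\mcal{C})\to\hat p^*(C_0(X)\rtimes S)$, apply Lemma~\ref{lem:26} with $Z=\widehat{S}*S$ and $Y=X$ to identify $C_c(\widehat{S}*S*X)$ with $\Gamma_c(\widehat{S}*S,p^*\mcal{C})$, and compose. The only cosmetic difference is the density step: the paper simply asserts that the Lemma~\ref{lem:26} map is \emph{surjective} onto $\Gamma_c$, whereas you argue inductive-limit density via elementary tensors $h\otimes k\otimes a$ and then push through the inductive-limit-continuous $\iota_0$; your route is slightly more explicit but amounts to the same thing.
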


\begin{proof}
Let $\mcal{C}$ be the usc-bundle associated to $C_0(X)$ as a
$C_0(S\unit)$-algebra.  Consider the map
$\iota_1:\Gamma_c(\widehat{S}*S,p^*\mcal{C})\rightarrow
\hat{p}^*(C_0(X)\rtimes S)$ given by 
$\iota_1(f)(\omega)(s)(x) := f(\omega,s)(x)$.  
It follows from Lemma \ref{lem:39} that this map is
continuous with respect to the inductive limit topology and its range
is dense in $\hat{p}^*(C_0(X)\rtimes S)$.  Now consider the map
$\iota_2:C_c(\widehat{S}*S*X)\rightarrow
\Gamma_c(\widehat{S}*S,p^*\mcal{C})$ given by $\iota_2(f)(\omega,s)(x)
= f(\omega,s,x)$. It follows from Lemma \ref{lem:26} that $\iota_2$ is
surjective and preserves the inductive limit topology.  Thus
the map $\iota = \iota_2\circ\iota_1$ has the correct form and all the
right properties. 
\end{proof}

Now we can finally tackle the dual action construction.  This will
provide the last tool we need to demonstrate Theorem \ref{thm:exist}.  

\begin{prop}
\label{prop:89}
Suppose $(A,S,\alpha)$ is a dynamical system and that $S$ is
an abelian group bundle with a Haar system.  
Then for each $\omega\in \widehat{S}$ there is
an automorphism $\hat{\alpha}_\omega$ on $A\rtimes S(\hat{p}(\omega))$
defined for $f\in C_c(S_{\hat{p}(\omega)},A(\hat{p}(\omega)))$ by 
\[
\hat{\alpha}_\omega(f)(s) = \overline{\omega(s)}f(s).
\]
With this action $(A\rtimes S, \widehat{S},\hat{\alpha})$ is a
dynamical system. 
\end{prop}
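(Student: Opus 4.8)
The plan is to check the three conditions in the definition of a groupoid dynamical system for the collection $\{\hat{\alpha}_\omega\}$: that each $\hat{\alpha}_\omega$ is a well-defined automorphism of the fibre $A\rtimes S(\hat{p}(\omega))$, that $\hat{\alpha}_{\omega\eta}=\hat{\alpha}_\omega\circ\hat{\alpha}_\eta$ whenever $\hat{p}(\omega)=\hat{p}(\eta)$, and that the induced action of $\widehat{S}$ on the usc-bundle $\mcal{E}$ of $A\rtimes S$ is continuous. That $\widehat{S}$ is itself an abelian locally compact Hausdorff group bundle over $S\unit$, so that such a system even makes sense, is the content of bundle duality \cite{bundleduality}, and $A\rtimes S$ is a $C_0(S\unit)$-algebra by \cite[Proposition 1.2]{specpaper}. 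For the first condition fix $\omega$ and put $u=\hat{p}(\omega)$; recall that the fibre $A\rtimes S(u)$ is the group crossed product $A(u)\rtimes S_u$ (\cite[Proposition 1.2]{specpaper}), so the formula in the statement defines $\hat{\alpha}_\omega$ on the dense subalgebra $C_c(S_u,A(u))$ as the map $f\mapsto\overline{\omega(\cdot)}f(\cdot)$. Using that $\omega$ is a character of the abelian group $S_u$ — so $\omega(t)\omega(t\inv s)=\omega(s)$ and $\omega(s\inv)=\overline{\omega(s)}$ — a direct computation shows this map respects the convolution and involution of $C_c(S_u,A(u))$; since $|\omega|\equiv 1$ it is isometric for the $I$-norm, hence continuous for the inductive limit topology and extends to $A(u)\rtimes S_u$, with inverse the analogous map built from $\omega\inv$. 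Thus $\hat{\alpha}_\omega$ is an automorphism (indeed the usual dual action of $\widehat{S_u}$). The second condition is immediate from $\overline{(\omega\eta)(s)}=\overline{\omega(s)}\,\overline{\eta(s)}$ on $C_c(S_u,A(u))$ and passes to the completion by density.

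The real work is the continuity condition, and Lemma \ref{lem:39} is precisely the tool for it. That lemma gives a map $\iota\colon\Gamma_c(\widehat{S}*S,p^*\mcal{A})\to\hat{p}^*(A\rtimes S)$ with $\iota(f)(\omega)(s)=f(\omega,s)$ which is continuous for the inductive limit topology and has dense range. Since the pairing $(\omega,s)\mapsto\omega(s)$ is continuous on $\widehat{S}*S$ (again \cite{bundleduality}), the formula $\Theta(f)(\omega,s):=\overline{\omega(s)}f(\omega,s)$ defines a map $\Theta$ of $\Gamma_c(\widehat{S}*S,p^*\mcal{A})$ into itself that preserves supports and uniform convergence, hence preserves the inductive limit topology. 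The key observation is the identity $\iota(\Theta f)(\omega)=\hat{\alpha}_\omega\bigl(\iota(f)(\omega)\bigr)$, which holds by inspection of the defining formulas. Because $\iota(\Theta f)$ lies in $\hat{p}^*(A\rtimes S)$, and so is a continuous section of $\hat{p}^*\mcal{E}$, this identity shows that $\omega\mapsto\hat{\alpha}_\omega(\iota(f)(\omega))$ is continuous for every $f$.

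To upgrade this to joint continuity, suppose $(\omega_i,b_i)\to(\omega,b)$ with $\hat{p}(\omega_i)=q(b_i)$, where $q$ is the bundle map of $\mcal{E}$; I want $\hat{\alpha}_{\omega_i}(b_i)\to\hat{\alpha}_\omega(b)$. Given $\epsilon>0$, the density of the range of $\iota$ together with \cite[Proposition C.20]{tfb2} applied to $b_i\to b$ produces an $f$ with $\|\iota(f)(\omega)-b\|<\epsilon$ and $\|\iota(f)(\omega_i)-b_i\|<\epsilon$ eventually. Since each $\hat{\alpha}_{\omega_i}$ is isometric and $\hat{\alpha}_{\omega_i}(\iota(f)(\omega_i))=\iota(\Theta f)(\omega_i)\to\iota(\Theta f)(\omega)=\hat{\alpha}_\omega(\iota(f)(\omega))$ by the previous paragraph, a second application of \cite[Proposition C.20]{tfb2} yields $\hat{\alpha}_{\omega_i}(b_i)\to\hat{\alpha}_\omega(b)$; this is the same manoeuvre that closes the proof of Proposition \ref{prop:68}. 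Having verified all three conditions, $(A\rtimes S,\widehat{S},\hat{\alpha})$ is a dynamical system. The one genuinely delicate point is this last step, and the difficulty is entirely in handling the topology of the usc-bundle $\mcal{E}$ of $A\rtimes S$ — which is what Lemma \ref{lem:39} was set up to resolve — after which the argument is the routine density-plus-\cite[Proposition C.20]{tfb2} bookkeeping indicated above.
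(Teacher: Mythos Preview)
Your proof is correct and follows essentially the same route as the paper's: both dismiss the algebraic conditions as routine (you spell them out a bit more), and for continuity both use Lemma~\ref{lem:39} to produce sections of the form $\iota(f)$, define the operator $\Theta(f)(\omega,s)=\overline{\omega(s)}f(\omega,s)$, exploit the key identity $\iota(\Theta f)(\omega)=\hat{\alpha}_\omega(\iota(f)(\omega))$, and finish by the isometry of $\hat{\alpha}_\omega$ together with \cite[Proposition C.20]{tfb2}. The only cosmetic difference is that the paper first lifts $b$ to an arbitrary section $g$ with $g(\omega)=b$ and then approximates $g$ by some $\iota(h)$, whereas you fold those two steps into one sentence; the underlying argument is identical.
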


\begin{proof}
Since everything else is straightforward, we will limit ourselves to
demonstrating the continuity of the action.  
Let $\mcal{E}$ be the bundle associated to $A\rtimes S$ and suppose 
$\omega_i\rightarrow \omega$ in $\widehat{S}$ and $f_i\rightarrow f$
in $\mcal{E}$ such that $f_i\in A\rtimes S(\hat{p}(\omega_i))$ for all
$i$.  Now choose $g\in \hat{p}^*(A\rtimes S)$ such that
$g(\omega) = f$.  It follows from Lemma \ref{lem:39} that we can
choose $h\in \Gamma_c(\widehat{S}*S,p^*\mcal{A})$ such that
$\|\iota(h)-g\|_\infty < \epsilon/2$.  Define $\alpha(h)(\omega,s) =
\overline{\omega(s)} h(\omega,s)$.  It is clear that $\alpha(h)\in
\Gamma_c(\widehat{S}*S,p^*\mcal{A})$.  It is also easy to see that
$\iota(\alpha(h))(\omega) = \alpha_\omega(\iota(h)(\omega))$.  Thus
$
\|\iota(\alpha(h))(\omega)-\alpha_\omega(f)\| =
\|\alpha_\omega(\iota(h)(\omega)-g(\omega))\| < \epsilon/2 < \epsilon$.
Next, since $g(\omega_i)\rightarrow g(\omega) =f$ and $f_i\rightarrow
f$ we have $\|g(\omega_i)-f_i\|\rightarrow 0$.  Therefore, eventually,
we have 
\begin{align*}
\|\iota(\alpha(h))(\omega_i)-\alpha_{\omega_i}(f_i)\| &\leq
\|\alpha_{\omega_i}(\iota(h)(\omega_i)-g(\omega_i))\| +
\|\alpha_{\omega_i}(g(\omega_i)-f_i)\| \\
&\leq \epsilon/2 + \|g(\omega_i)-f_i\| < \epsilon.
\end{align*}
It follows from \cite[Proposition C.20]{tfb2} that
$\alpha_{\omega_i}(f_i)\rightarrow \alpha_\omega(f)$.  
\end{proof}

\begin{remark}
The action from Proposition \ref{prop:89} is a generalization of the
usual Takai dual action for abelian groups.  
\end{remark}

We are now ready to prove our existence theorem. 

\begin{proof}[Proof of Theorem \ref{thm:exist}]
We have shown in Proposition \ref{prop:90} 
that $C_0(X)\rtimes S$ has Hausdorff spectrum $S\unit$.
Furthermore, we showed in Proposition \ref{prop:89} that
there is an action of $\widehat{S}$ which, if we view $C_c(S_u\times
X_u)$ as sitting densely inside $C_c(S_u,C_0(X_u))$, is given by 
\[
\widehat{\lt}_\omega(f)(s,x) = \overline{\omega(s)}f(s,x)
\]
for $f\in C_c(S_u\times X_u)$.  We need to show that $\widehat{\lt}$
is locally unitary. Let $\mcal{U}$ be a trivializing cover of $X$ and
let $\phi_i$ be the local trivializations.  
Fix $U_i\in\mcal{U}$.  Then for all 
$w\in U_i$, $\omega \in \widehat{S}_w$, and $f\in C_c(S_w\times X_w)$
define 
\begin{equation}
\label{eq:108}
u_\omega f(s,x) := \overline{\omega(\phi_i(x))}f(s,x).
\end{equation}
Simple calculations show that 
$u$ is a homomorphism on $S_w$.  Next we will show that $u$ is
adjointable.  Equip $C_0(X_w)\rtimes S_w$ with is usual inner product
as a Hilbert module.  For all $f,g\in C_c(S_w\times X_w)$ we have 
\begin{align*}
\langle u_\omega f,g\rangle(s,x) 
&= \int_S \omega(\phi_i(t\inv\cdot x)) \overline{f(t\inv,t\inv\cdot x)}g(t\inv
s, t\inv\cdot x)d\beta^w(t) \\ &= \langle f,u_{\omega\inv}g\rangle(s,x).
\end{align*}
This shows that $u_\omega$ is adjointable on $C_c(S_w\times X_w)$ and we can
also observe that 
\[
\|u_\omega f\|^2 = \|\langle u_\omega f, u_\omega f\rangle\| = 
\|\langle f, u_{\omega\inv}u_\omega f\rangle \| = \|\langle f, f\rangle\| = \|f\|^2.
\]
Thus $u_\omega$ is isometric on $C_c(S_w\times X_w)$ and as such it can be
extended to a unitary multiplier on $C_0(X_w)\rtimes S_w$.  
All that remains for the collection
$\{u_\omega\}$ to define a unitary action of $\hat{p}\inv(U_i)$ on
$C_0(X)\rtimes S(U_i)$ is continuity.  

Let $\mcal{E}$ be the bundle associated to $C_0(X)\rtimes S$ and fix
$\epsilon > 0$.  Suppose $\omega_j\rightarrow \omega$ in $\hat{p}\inv(U_i)$ and
$f_j\rightarrow f$ in $\mcal{E}|_{U_i}$ such that $f_j\in
C_0(X)\rtimes S(\hat{p}(\omega_j))$ for all $j$.  Choose $g\in \hat{p}^*
(C_0(X)\rtimes S)$ such that $g(\omega)=f$.   Using Corollary \ref{cor:12} we can find a
continuous, compactly supported function $h$ on $\widehat{S}*S*X$ such that
$\|\iota(h)-g\|_\infty < \epsilon/2$.  Consider the open set $O =
\hat{p}\inv(U_i)*p\inv(U_i)*q\inv(U_i)$ in $\widehat{S}*S*X$. We
define a new function $k\in C(O)$ by 
\[
k(\chi,s,x) = \psi(p(s))\overline{\chi(\phi_i(x))}h(\chi,s,x)
\]
where $\psi\in C_c(U_i)$ is some function which is one on a
neighborhood of $\hat{p}(\omega)$. Now $k$ is clearly compactly
supported with $\supp k \subset O$.
Therefore we can, and do, extend $k$ by zero to all of $\widehat{S}*S*X$.  Next
we observe the following facts.  First, 
$\iota(k)(\omega) = u_\omega \iota(h)(\omega)$,
and eventually $\iota(k)(\omega_j) =
u_{\omega_j} \iota(h)(\omega_j$).  Second, that 
\[
\|\iota(k)(\omega) - u_\omega f\| = \|u_\omega(\iota(h)(\omega) -
g(\omega))\| = \|\iota(h)(\omega) - g(\omega)\| < \epsilon/2.
\]
Third, $f_i\rightarrow f$ and $g(\omega_i)\rightarrow g(\omega) =
f$ so that $\|f_i-g(\omega_i)\|\rightarrow 0$.  Thus, eventually,
we have 
\begin{align*}
\|\iota(k)(\omega_i) - u_{\omega_i}f_i\| &\leq
\|u_{\omega_i}(\iota(h)(\omega_i) - g(\omega_i)) \| +
\|u_{\omega_i}(g(\omega_i) - f_i)\| \\
&\leq \epsilon/2 + \|g(\omega_i)-f_i\| < \epsilon.
\end{align*}
Finally, we observe that $\iota(k)(\omega_i)\rightarrow
\iota(k)(\omega)$ since $\iota(k)$ is a continuous section.  It
follows from \cite[Proposition C.20]{tfb2} 
that $u_{\omega_i}f_i\rightarrow u_\omega f$ and hence
$\{u_\omega\}$ defines a unitary action of $\hat{p}\inv(U_i)$ on
$C_0(X)\rtimes S(U_i)$.  What's more, the calculation 
\begin{align*}
u_\omega f u_\omega^*(s,x) &= \overline{\omega(\phi_i(x))} (u_\omega
f^*)^*(s,x) = \overline{\omega(\phi_i(x)) u_\omega
  f^*(s\inv,s\inv\cdot x)} \\
&= \overline{\omega(\phi_i(x))} \omega(\phi_i(s\inv\cdot x)) f(s,x) = 
\overline{\omega(\phi_i(x))}\overline{\omega(s)}\omega(\phi_i(x))
f(s,x) \\
&= \overline{\omega(s)}f(s,x) = \widehat{\lt}f(s,x)
\end{align*}
shows that $u$ implements $\widehat{\lt}$ on $\hat{p}\inv(U_i)$.  
Since we performed this construction for an arbitrary element 
of the cover $\mcal{U}$, it follows that $\widehat{\lt}$ is locally
unitary.  

Consider $Y = ((C_0(X)\rtimes S)\rtimes \widehat{S})\sidehat$.  Now, $Y$ is
a principal $\doubledual{S}$-bundle and, in light of duality
\cite[Theorem 16]{bundleduality}, a principal $S$-bundle as well.  We would
like to show that $Y$ is isomorphic to $X$.  
Using Theorem \ref{prop:principcohom} it suffices to
show that $X$ and $Y$ have the same cohomological invariant.  Let
$\gamma_{ij}$ be the transition functions for $X$ with respect to the
trivializing maps $\phi_i$.  Let $\eta_{ij}$ be the transition
functions for $Y$ and recall from Theorem \ref{thm:locunit} that we have 
$\eta_{ij}(v)(\omega) = \overline{\pi}_v((u_\omega^i)^*u_\omega^j)$
where $\pi_v$ is the unique irreducible representation of
$A(v)$ and $u_\omega^i, u_\omega^j$ are the unitaries
constructed above.  It follows from general principal bundle nonsense
that $\phi_i(x) = \gamma_{ij}(q(x))\phi_j(x)$ for all $x$.
We now compute for $f\in C_c(S_v\times X_v)$ 
\begin{align*}
((u_\omega^i)^*u_\omega^j f)(s,x) &=
\overline{\omega\inv(\phi_i(x))\omega(\phi_j(x))}f(s,x) =
\omega(\gamma_{ij}(v)\phi_j(x))\overline{\omega(\phi_j(x))}f(s,x)\\
&= \omega(\gamma_{ij}(v))f(s,x) = \hat{\gamma}_{ij}(v)(\omega) f(s,x)
\end{align*}
where $\hat{\gamma}_{ij}(v)$ denotes the image of $\gamma_{ij}(v)$ in
the double dual.  Therefore, since $\pi_v$ is faithful,
$\eta_{ij}(v)(\omega) = \hat{\gamma}_{ij}(v)(\omega)$.  Thus, once we
identify $S$ with $\doubledual{S}$, the cohomological invariants of
$X$ and $Y$ are identical.  
\end{proof}

\begin{example}
Theorem \ref{thm:exist} says that any
principal $S$-bundle $X$ gives rise to a locally unitary action of
$\widehat{S}$ on $C_0(X)\rtimes S$ and in particular this holds for locally
$\sigma$-trivial bundles.  Thus any example in \cite{locunitarystab}
yields a locally unitary action.  
\end{example}

\begin{remark}
It is worth describing, at least briefly, how this material
generalizes \cite{locunitary}.  
Suppose $H$ is an abelian group and $A$ has
Hausdorff spectrum $X$.  If $\alpha$ is an action of $H$ on $A$
then, as in \cite[Example 1.4]{inducpaper}, we can form the transformation
groupoid $H\times X$ and there is an action $\beta$ of $H\times X$ on
$A$.  Furthermore, we have $A\rtimes_\alpha H \cong A\rtimes_\beta
(H\times X)$.  Without getting into the details, $\alpha$ is locally
unitary according \cite{locunitary} if, for each $\pi\in X$, there is an
open neighborhood $U$ of $\pi$ and a strictly continuous map $u:H\rightarrow
M(A)$ such that for each $\rho\in U$, $\bar{\rho}\circ u$ is a
representation of $H$ on $\mcal{H}_\rho$ which implements $\alpha$.  In
particular this implies that $\rho\circ \alpha_s\inv =
\bar{\rho}(u_s)\rho \bar{\rho}(u_s^*)$ is equivalent to $\rho$.
Thus the action of $H$ on $X$ induced by $\alpha$ is trivial and the
transformation groupoid $H\times X$ is the trivial
group bundle.  What's more, since $A$ has Hausdorff spectrum, 
it is not hard to show
that $u_s(x)$ implements $\beta_{(s,x)}$ on $A(x)$ and that $\beta$ is
unitarily implemented by $\{u_s(x)\}$ on $H\times U$.  Thus $\beta$ is
a locally unitary action of $H\times S$ on $A$.  Now,
the dual of $H\times X$ is $\widehat{H}\times X$
and we have, according to Theorem \ref{thm:locunit}, that
$(A\rtimes_\alpha H)\sidehat \cong
(A\rtimes_\beta(H\times X))\sidehat$ is a principal $\widehat{H}\times
X$ bundle.  However, it follows from Remark \ref{rem:1} that this
implies $(A\rtimes_\alpha H)\sidehat$ is a principal
$\widehat{H}$-bundle.  From here it is straightforward to see how the results of
this section are related to those in \cite{locunitary}.
\end{remark}

\bibliographystyle{amsplain}
\bibliography{/home/goehle/references}

\providecommand{\bysame}{\leavevmode\hbox to3em{\hrulefill}\thinspace}
\providecommand{\MR}{\relax\ifhmode\unskip\space\fi MR }
% \MRhref is called by the amsart/book/proc definition of \MR.
\providecommand{\MRhref}[2]{%
  \href{http://www.ams.org/mathscinet-getitem?mr=#1}{#2}
}
\providecommand{\href}[2]{#2}
\begin{thebibliography}{10}

\bibitem{bundleduality}
Geoff Goehle, \emph{Group bundle duality}, Illinois Journal of Mathematics, in
  press., 2009.

\bibitem{mythesis}
\bysame, \emph{Groupoid crossed products}, Ph.D. thesis, Dartmouth College,
  2009, arXiv:0905.4681v1.

\bibitem{inducpaper}
\bysame, \emph{The {Mackey} machine for regular groupoid crossed products.
  {I}}, arXiv:0908.1431v1, 2009.

\bibitem{specpaper}
\bysame, \emph{The {Mackey} machine for regular groupoid crossed products.
  {II}}, arXiv:0908.1434v1, 2009.

\bibitem{yinglee}
Ru~Ying Lee, \emph{On the {$C^*$-algebras} of operator fields}, Indiana
  University Mathematics Journal \textbf{25} (1976), 303--314.

\bibitem{renaultequiv}
Paul~S. Muhly and Dana~P. Williams, \emph{Renault's equivalence theorem for
  groupoid crossed products}, New York Journal of Mathematics Monographs
  \textbf{3} (2008), 1--83.

\bibitem{prelocunit}
John Phillips and Iain Raeburn, \emph{Automorphisms of {$C^*$}-algebras and
  second {{\v{C}}}ech cohomology}, Indiana University Mathematics Journal
  \textbf{29} (1980), 799--822.

\bibitem{locunitary}
\bysame, \emph{Crossed products by locally unitary automorphism groups and
  principal bundles}, Journal of Operator Theory \textbf{11} (1984), 215--241.

\bibitem{cpct}
Iain Raeburn and Jonathan Rosenberg, \emph{Crossed products of continuous-trace
  {$C^*$}-algebras by smooth actions}, Transactions of the American
  Mathematical Society \textbf{305} (1988), no.~1, 1--45.

\bibitem{pullback}
Iain Raeburn and Dana~P. Williams, \emph{Pull-backs of {$C^*$}-algebras and
  crossed products by certain diagonal actions}, Transactions of the American
  Mathematical Society \textbf{287} (1985), no.~2, 755--777.

\bibitem{locunitarystab}
\bysame, \emph{Crossed products by actions which are locally unitary on the
  stabilizers}, Journal of Functional Analysis \textbf{81} (1988), 385--431.

\bibitem{tfb}
\bysame, \emph{Morita equivalence and continuous-trace {$C^*$}-algebras},
  Mathematical Surveys and Monographs, vol.~60, American Mathematical Society,
  1998.

\bibitem{renaultgcp}
Jean Renault, \emph{The ideal structure of groupoid crossed product
  {$C^*$}-algebras}, Journal of Operator Theory \textbf{25} (1991), 3--36.

\bibitem{histstonevon}
Jonathan Rosenberg, \emph{A selective history of the {Stone-von Neumann}
  theorem}, Operator algebras, quantization, and noncommutative geometry,
  Contemporary Mathematics, vol. 365, American Mathematical Society, 2004,
  pp.~331--353.

\bibitem{tfb2}
Dana~P. Williams, \emph{Crossed products of {$C^*$}-algebras}, Mathematical
  Surveys and Monographs, vol. 134, American Mathematical Society, 2007.

\end{thebibliography}

\end{document}